% ----------------------------------------------------------------
% AMS-LaTeX Paper - modified by S.-J. Oh
% **** -----------------------------------------------------------
\documentclass[10pt]{article}
\usepackage{amssymb}
\usepackage{graphicx}
\usepackage{xcolor} % A package to add color.
\usepackage{tensor}
\usepackage{fullpage} % Sets all margins to 1 in.  
\usepackage{amsmath}
\usepackage{amsthm}
\usepackage{verbatim}

\usepackage{enumitem}
\setlist[enumerate]{leftmargin=1.5em}
\setlist[itemize]{leftmargin=1.5em}

\setlength{\marginparwidth}{.6in}
\setlength{\marginparsep}{.2in}

\usepackage{seqsplit}
%\renewcommand*\showkeyslabelformat[1]{%
%  \fbox{\parbox[t]{\marginparwidth}{\raggedright\normalfont \tiny \ttfamily\seqsplit{#1}}}}

% COLORS ------------------------------------------------------------
\definecolor{green}{rgb}{0,0.8,0} % Redefines the color green.
%%%% Annotations %%%%
 % Defines the command "\texthl{<text>}" to be the text of its argument highlighted in yellow.
 % Defines the command "\displayhl{<displayed math>}" to be the displayed mathematics of its argument highlighted in yellow.
 % Defines the command "\scripthl{<superscript or subscript>}" to be the superscript or subscript of its argument highlighted in yellow.
%\newcommand{\comment}[1]{\begingroup\color{red} #1\endgroup} % Defines the command 

 % Red
 % Red

 % Red
% THEOREMS -------------------------------------------------------
%\newtheorem{theorem}{Theorem}

\newtheorem{theorem}{Theorem}[section]
\newtheorem{corollary}[theorem]{Corollary}
\newtheorem{lemma}[theorem]{Lemma}
\newtheorem{proposition}[theorem]{Proposition}

\newenvironment{customthm}[1]
{\innercustomthm}
{\endinnercustomthm}
\theoremstyle{definition}
\newtheorem{definition}[theorem]{Definition}

\theoremstyle{remark}
\newtheorem{remark}[theorem]{Remark}

\numberwithin{equation}{section}
%\numberwithin{equation}{subsection}
% MATH -----------------------------------------------------------
\newcommand{\nrm}[1]{\Vert#1\Vert}

\newcommand{\abs}[1]{\vert#1\vert}

\newcommand{\nnrm}[1]{{\vert\kern-0.25ex\vert\kern-0.25ex\vert #1 
    \vert\kern-0.25ex\vert\kern-0.25ex\vert}}

\newcommand{\lap}{\Delta}

\newcommand{\rd}{\partial}
\newcommand{\nb}{\nabla}

%Greek Characters
\newcommand{\alp}{\alpha}

\newcommand{\gmm}{\gamma}

\newcommand{\dlt}{\delta}

\newcommand{\eps}{\epsilon}

\newcommand{\omg}{\omega}
\newcommand{\Omg}{\Omega}

%Bold Characters

\newcommand{\bfc}{{\bf c}}

\newcommand{\bff}{{\bf f}}

\newcommand{\bfn}{{\bf n}}

\newcommand{\bfp}{{\bf p}}

\newcommand{\bfu}{{\bf u}}

\newcommand{\bfrho}{{\boldsymbol{\rho}}}

%Blackboard Bold Characters

\newcommand{\bbN}{\mathbb N}

\newcommand{\bbR}{\mathbb R}

\newcommand{\bbT}{\mathbb T}

%MathCal Characters

\newcommand{\calO}{\mathcal O}

\newcommand{\calX}{\mathcal X}

%MathFrak Characters

%\newcommand{\BX}{\mathbf{B}(X)}
%\newcommand{\A}{\mathcal{A}}

% Shortcuts for this paper --------------------------------
		% absolute value with \left\vert and \right\vert
%
				% background stationary solution
				% background stationary solution
\newcommand{\bfomg}{\boldsymbol{\omg}}		% fluid vorticity for the nonlinear Hall-MHD
			% vector potential for $\bfB$
		% vector potential for $\bfZ$
		% error in the linearized electron-MHD
			% error in the linearized Hall-MHD
		% error in the wave packet construction

\newcommand{\tu}{\tilde{\bfu}}					% tilde-u

				% tilde-sigma
					% tilde-psi
				% tilde-omg
\newcommand{\trho}{\tilde{\bfrho}}
\newcommand{\tc}{\tilde{\bfc}}
\newcommand{\tf}{\tilde{\bff}}  
				% deformation tensor
				% mean
% ---------------------------------------------------------------- 

\vfuzz2pt % Don't report over-full v-boxes if over-edge is small
\hfuzz2pt % Don't report over-full h-boxes if over-edge is small

%-----------------------------------------------------------------

\begin{document}

\title{Well-posedness and singularity formation for inviscid Keller--{Segel}--fluid system of consumption type}%: Title of the article
\author{In-Jee Jeong\thanks{Department of Mathematical Sciences and RIM, Seoul National University. E-mail: injee\_j@snu.ac.kr}\and Kyungkeun Kang\thanks{Department of Mathematics, Yonsei University. E-mail: kkang@yonsei.ac.kr}}
\date{\today}

%\author{In-Jee Jeong}%
%\address{KIAS, Seoul, Korea 02455}%
%\email{ijeong@kias.re.kr}%
 
%\thanks{}%
%\subjclass{}%
%\keywords{}%

%\date{\today}%
%\dedicatory{}%
%\commby{}%
% ----------------------------------------------------------------

\maketitle

% ----------------------------------------------------------------

\begin{abstract}
	We consider the Keller--Segel system of consumption type coupled with an incompressible fluid equation. The system describes the dynamics of oxygen and bacteria densities evolving within a fluid. We establish local well-posedness of the system in Sobolev spaces for partially inviscid and fully inviscid cases. In the latter, additional assumptions on the initial data are required when either the oxygen or bacteria density touches zero. Even though the oxygen density satisfies a maximum principle due to consumption, we prove finite time blow-up of its $C^{2}$--norm with certain initial data. 
\end{abstract}

%\tableofcontents

\section{Introduction}

\subsection{The systems}

In this paper, we study a mathematical model, so called Keller--Segel-fluid system, formulating the dynamics of oxygen, swimming bacteria, and viscous incompressible fluids in two or three dimensions.
Such a model was introduced by Tuval et al. \cite{Tuval2277}, proposing the
dynamical behaviors of bacteria, Bacillus subtilis, living in water and consuming oxygen.
To be more precise, we consider the following Keller--Segel-fluid (KSF) system, in its general form, which describes the evolution of the bacteria density $\bfrho$, the oxygen density $\bfc$, and the fluid velocity $\bfu$:  

\begin{equation} \tag{KSF} \label{eq:KSF}
\left\{
\begin{aligned}
&\rd_t \boldsymbol{\rho}  + \bfu\cdot\nabla \bfrho = D_\rho \lap \bfrho  -\nabla \cdot (\chi(\bfc)\bfrho\nabla \bfc) , \\
&\rd_t \bfc + \bfu\cdot\nabla \bfc = D_c\lap \bfc - k(\bfc)\bfrho  , \\
&\rd_t \bfu + \bfu\cdot\nabla \bfu + \nabla \bfp = D_u \lap \bfu + \bfrho\nabla \phi , \\
& \mathrm{div}\, \bfu = 0. 
\end{aligned}
\right.
\end{equation} 
 Here $\chi(\cdot)$ and $k(\cdot)$ are non-negative and smooth functions denoting chemotactic sensitivity and the oxygen consumption rate, respectively. The main features of the model are that the bacteria moves towards the region of high oxygen concentration and the oxygen is being consumed by the bacteria. Moreover, both the oxygen and bacteria concentrations are being transported by the fluid velocity, and the bacteria density affects the fluid velocity via some potential function $\phi$
(see \cite{Tuval2277} for more details on biological phenomena and \cite{CFKLM} for numerical investigations). 
Together with \eqref{eq:KSF}, we shall consider the simplified model obtained by  formally taking  $\bfu \equiv 0$:
\begin{equation} \tag{KS} \label{eq:KS}
\left\{
\begin{aligned}
& \rd_t\bfrho = D_\rho\lap\bfrho - \nabla\cdot(\chi(\bfc)\bfrho\nabla \bfc) ,\\
& \rd_t \bfc = D_c\lap \bfc - k(\bfc)\bfrho ,
\end{aligned}
\right.
\end{equation} which is  a Keller--Segel-type system (see e.g. \cite{MR2860628}). We shall mainly consider the above systems in the domain $\bbT^d$ for some $d\ge 1$, but many of our results carry over to the case of $\bbR^k\times\bbT^{d-k}$ for $0<k\le d$ and bounded domains of $\bbR^d$ with smooth boundary. In this note, we always assume that $\bfrho, \bfc \ge 0$, which is preserved by the dynamics of \eqref{eq:KSF} and \eqref{eq:KS}. Observe that the assumption $k \ge 0$ yields a maximum principle for $\bfc$: as long as the solution exists, $\nrm{\bfc(t)}_{L^\infty}$ decreases with time.

We can compare \eqref{eq:KS} to the classical Keller--Segel (cKS)  model  
suggested by Patlak \cite{MR81586} and Keller--Segel \cite{MR3925816} to describe the motion of amoeba, Dictyostelium discoideum living in soils, which is given as 
\begin{equation} \tag{cKS} \label{eq:cKS}
\left\{
\begin{aligned}
& \rd_t\bfrho = D_\rho\lap\bfrho - \nabla\cdot(\chi(\bfc)\bfrho\nabla \bfc) ,\\
& \tau \rd_t \bfc = D_c\lap \bfc +\alpha\bfrho -\beta \bfc,
\end{aligned}
\right.
\end{equation} 
where $\alpha$ and $\beta$ are positive constants indicating production and decaying rate of chemical 
substance, respectively.
Here the system \eqref{eq:cKS} consists of biological cell density $\bfrho$ and the concentration of chemical attractant
substance $\bfc$ that is produced by $\bfrho$. Compared to \eqref{eq:KS}, the second equation of \eqref{eq:cKS} has a positive coefficient for $\rho$, which seems to cause significantly different behaviors of solutions.
In general, finite time blow-up results of the system \eqref{eq:cKS} have been known for large data in case that $D_\rho>0$, $D_c>0$ and $\tau\ge 0$ (see e.g. \cite{MR1046835}, \cite{MR1627338}, \cite{MR3115832}). There 
are also some related results for inviscid models with either $D_\rho=0$ or $D_c=0$ in \eqref{eq:cKS}, but we are not going to list all of the related results here (see e.g.  \cite{MR2099126}, \cite{MR3223867},  \cite{MR3473109}, reference therein).

\subsection{Main results}

Much of the mathematical literature concerning the systems \eqref{eq:KSF} and \eqref{eq:KS} was devoted to establishing global well-posedness of solutions, assuming that the dissipation coefficients $D_\rho$, $D_c$, and $D_u$ are strictly positive. 
Since our main concerns are focused on partially or fully inviscid case, we shall not try to review all related results, but instead, 
just list  a few of them related to the issue of global well-posedness.
Ever since mathematical results were made initially in  \cite{MR2659745} and \cite{MR2754058} regarding local well-posedness and stability near constant steady state of solutions, the issue on global well-posedness or blow-up in a finite time has received lots of speculation. Among other results, it was shown in \cite{MR2876834} that regular solutions exist globally in time for general data in two dimensions under certain assumptions on $\chi$ and $k$ (we refer to \cite{MR3149063} {and \cite{MR3605965}} for asymptotic behaviors of solutions). We note that the paper \cite{CKL1} establishes global well-posedness of smooth solutions with different conditions on $\chi$ and $k$ in $\mathbb R^2$ (see also \cite{CKL2}, \cite{MR3450942} and \cite{MR3755872} for 
blow-up criteria, temporal decay and  asymptotic of solutions, { and \cite{MR3542616} for weak solutions}).
Very recently, \cite{MR4185378} obtained global well-posedness  with slightly relaxed the conditions on $\chi$ and $k$, compared to those in \cite{MR2876834}.

On the other hand, the goal in this paper is to study local well-posedness of smooth solutions when some or all of the dissipation coefficients vanish. To the best of the authors' knowledge, the only result in this direction is the one given in \cite{CKL2}, where the authors prove global well-posedness in the case $D_c = 0$ and $D_\rho>0$, provided that $L^{N/2}$-norm of $\rho_0$ is sufficiently small. 

Our primary motivation for studying the inviscid (or partially inviscid) system is that norm growth for the inviscid system could be responsible for instabilities arising in the slightly viscous case. Indeed, in the original paper \cite{Tuval2277} (see also \cite{CFKLM}) where the system was introduced and instabilities were explored numerically, the dissipation coefficients take values in $10^{-4} \sim 10^{-2}$ after nondimensionalization. 

In the presence of local-in-time wellposedness for the inviscid system, one can establish in general that the corresponding viscous problem for the same initial data cannot blow-up \textit{earlier} than the inviscid case; this type of result goes back to a classical work of Constantin \cite{Con2}. On the contrary, if the inviscid equation blows up in finite time, it can be expected that the slightly viscous system should go through a large norm growth, which is divergent as the viscosity constant goes to zero. Similar phenomena should occur in the extreme case of the inviscid system being \textit{ill-posed}, meaning that the Sobolev norm of the solution could blow up in an arbitrary short interval of time. In other words, ``norm inflation'' for the inviscid equation could be responsible for instabilities appearing in the corresponding slightly viscous case. This type of result was recently established for the incompressible Euler and Navier--Stokes equations in \cite{JY, JY2} { (see also  \cite{MR3223867}, \cite{MR3782543}, \cite{CKL2}, and \cite{MR3473109} for the case of partially invicid case of Keller--Segel type equations)}.

The conclusion of this paper is that  the systems \eqref{eq:KSF} and \eqref{eq:KS} are indeed \textit{locally well-posed} even in the fully inviscid case $D_\rho=D_c=D_u=0$. This is somewhat surprising, since the equation for $\bfrho$ involves two derivatives of $\bfc$. Roughly speaking, loss of one derivative can be gained by pairing the derivative of $\bfc$ together with $\bfrho$, and the other loss of derivative can be handled by performing a \textit{modified} energy estimate: the key observation is that in the expression of \begin{equation*}
\begin{split}
\frac{d}{dt}\int \frac{k(\bfc )}{\chi(\bfc)} (\rd^m\bfrho)^2 + \bfrho (\rd^{m+1}\bfc)^2,
\end{split}
\end{equation*} the top order terms completely cancel each other, which allows to close an a priori estimate. Here, the signs $\chi, k\ge 0$ are fundamental: if one of the either sign is reversed, then  we expect the resulting systems to be strongly \textit{ill-posed} in sufficiently regular Sobolev spaces. Such an ill-posed behavior could be easily seen from a simple linear analysis which is performed later in Section \ref{sec:inviscid}, Remark \ref{rem:illposed}. This already demonstrates a striking difference between Keller--Segel equations of consumption and production type. {Furthermore, we remark that the modified energy method is robust enough to handle the general flux case (considered very recently in \cite{WinklerIMRN}) where $\nb\cdot (\chi(\bfc)\bfrho \nb\bfc)$ in (KS) is replaced by $\nb\cdot (\chi(\bfc)\bfrho  S \nb\bfc)$, under some assumptions on the matrix $S$. See Remark \ref{rem:rotation} below. 
}

\subsubsection{Dissipation only on the oxygen concentration}

Our first main results concerns local well-posedness of \eqref{eq:KSF} and \eqref{eq:KS} with smooth initial data when $D_u = 0$ and $D_\rho=0$ but $D_c>0$. While the precise statement is given in Theorem \ref{thm:c-viscous}, we state the main result briefly here, in the simplest setup of $\bbT^d$. {We consider $d\ge2$, simply because in the case $d=1$, there are no non-trivial incompressible velocity fields.}

\begin{customthm}{A}[Local well-posedness with dissipation only on $\bfc$]\label{MainThm1}
	The system \eqref{eq:KSF} in the case $D_{\rho}, D_u \ge 0$ and $D_c >0$ is locally well-posed in $(\bfrho,\bfc,\bfu) \in (H^m\times H^{m+1}\times H^{m})(\bbT^d)$ for any $m>\frac{d}{2}+1$ {with $d\ge2$}.  
\end{customthm}

The proof of Theorem \ref{MainThm1} is rather straightforward, the main observation being that the $H^m$-regularity of $\bfrho$ and $\bfu$ should be paired with the $H^{m+1}$-regularity of $\bfc$. In Theorem \ref{thm:c-viscous}, we actually handle the case of bounded domains with Neumann boundary condition for $\bfc$, which gives several technical complications. We defer the detailed discussion to Section \ref{sec:c-viscous}, where we state and prove a precise version of the above result.

\subsubsection{Fully inviscid case}

As we shall demonstrate in Section \ref{sec:inviscid}, a simple linear analysis around trivial steady states of \eqref{eq:KSF} shows that the system could be well-posed even in the absence of dissipation. Let us first consider the simpler case of \eqref{eq:KS}. The key ingredient in the proof of local well-posedness, as illustrated in the above, is to use the modified norms $\nrm{\bfc^{\frac{1}{2}}\rd^m\bfrho}_{L^2}$ and $\nrm{\bfrho^{\frac{1}{2}}\rd^{m+1}\bfc}_{L^2}$, in the simplest model case of $k(z)=z$ and $\chi(z)=1$. Combining these modified norms, we can guarantee a crucial cancellation of top order terms, which allows one to close an a priori estimate in $(\bfrho,\bfc)\in H^{m}\times H^{m+1}$ as long as $\bfrho$ and $\bfc$ stay away from zero. With such a modified energy estimate for \eqref{eq:KS}, local well-posedness can be extended without much difficulty to the case of \eqref{eq:KSF}, when the fluid viscosity is strictly positive; that is, $D_u>0$. The fully inviscid assumption $D_u = 0$ gives rise to a serious problem; to explain it in simple terms, the equation for $\rd^{m+1}\bfc$ involves $\rd^{m+1}\bfu$, and in turn, the equation for $\rd^{m+1}\bfu$ involves $\rd^{m+1}\bfrho$. Finally, the equation for $\rd^{m+1}\bfrho$ costs $m+3$ derivatives of $\bfc$. The way out of this loop is to consider modified energy estimate for \textit{good variables}; it turns out that the time derivative of the quantity \begin{equation}\label{eq:key}
\begin{split}
\int \frac{k(\bfc)}{\chi(\bfc)} \left| \rd^m\bfrho + \frac{\nb\bfc }{k(\bfc)} \cdot \rd^m\bfu \right|^2  + \bfrho \left| \rd^m \nb \bfc - \frac{\nb\bfc}{\bfrho k(\bfc)} \nb \bfc \cdot \rd^m \bfu \right|^2 + \left| \rd^m \bfu \right|^2 
\end{split}
\end{equation} satisfies a good energy structure, for any $m$. Note that when $\bfrho$ and $\bfc$ are strictly positive, the above is equivalent with the usual $\dot{H}^m$-norm for $(\bfrho,\nb\bfc,\bfu)$. This result is briefly stated as follows:

\begin{customthm}{B}[Local well-posedness in the inviscid case with non-vanishing $\bfrho$ and $\bfc$]\label{MainThm2}
	The system \eqref{eq:KSF} in the case $D_{\rho}=D_u=D_c = 0$ is locally well-posed in $(\bfrho,\bfc,\bfu) \in (H^m\times H^{m+1}\times H^{m})(\bbT^d)$ for any $m>\frac{d}{2}+1$, {$d\ge2$}, and strictly positive $\bfrho_0$ and $\bfc_0$.  
\end{customthm}

The precise statement of the above is given as Theorem \ref{thm:lwp-away} below. Given this result, it is natural to ask what happens when either $\bfrho$ or $\bfc$ vanishes at some points. In this case, the inclusion of velocity (with $D_u=0$) gives rise to a serious issue as the coefficients \begin{equation*}
\begin{split}
\frac{\nb\bfc }{k(\bfc)}, \quad \frac{\nb\bfc}{\bfrho k(\bfc)} \nb \bfc
\end{split}
\end{equation*} are in general \textit{singular} (unbounded when $\bfc$ vanishes), which does not allow one to deduce regularity of $\bfrho$ and $\bfc$ even when the integral \eqref{eq:key} is finite. Restricting ourselves to the case of \eqref{eq:KS}, we can still ensure existence and uniqueness of smooth solutions, upon imposing some assumptions on the initial data (which we prove to propagate in time) near their zeroes. For now, let us roughly state it as follows:
\begin{customthm}{C}[Local well-posedness in the inviscid case with possibly vanishing $\bfrho$ and $\bfc$]\label{MainThm3}
	{For any $d\ge1$,} the system \eqref{eq:KS} in the case $D_{\rho}=D_c = 0$ is locally well-posed in sufficiently regular Sobolev spaces with possibly vanishing initial data $\bfrho_0$ and $\bfc_0$, under some additional assumptions on the data near their zeroes. 
	
	Within this locally well-posed class of solutions, there exist initial data $\bfrho_0, \bfc_0 \in C^\infty(\bbT^d)$ whose unique solution to \eqref{eq:KS} blows up in finite time; more concretely, there exists some $T^*>0$ such that $\nrm{\bfc(t)}_{C^2} \rightarrow+\infty$ as $t \to T^*$.  
\end{customthm}

{

\begin{remark}
	The terminology ``sufficiently regular'' and the further assumptions that we need to impose in the case of vanishing initial data are somewhat complicated to define and therefore we defer the precise statements to Theorems \ref{thm:lwp-rho-away} and \ref{thm:quadratic}. 
\end{remark}

}

As stated in this theorem, we are able to prove singularity formation within this class, which is surprising since $\bfc$ satisfies a maximum principle: $\nrm{\bfc}_{L^\infty}$ is decreasing with time. A rough heuristics for this singularity formation can be given as follows: arrange initial data for \eqref{eq:KS} so that $\bfrho$ takes its minimum value where $\bfc$ reaches its maximum. Then, since the consumption of $\bfc$ is proportional to $\bfrho$, the profile of $\bfc$ becomes steeper near its maximum. This steepening of $\bfc$ is enhanced by the dynamics of $\bfrho$, which concentrates towards the maximum of $\bfc$. It will be very interesting to extend this singularity formation to the case of a partially viscous \eqref{eq:KSF} system. 
When vanishing of either $\bfc$ or $\bfrho$ occurs, it is no longer true that having a bound on the quantity \eqref{eq:key} ensures smoothness. Indeed, the Keller--Segel model in this case should be interpreted as a \textit{degenerate hyperbolic} system, and our main technical tool for local well-posedness is the following weighted Gagliardo--Nirenberg--Sobolev inequalities: \begin{lemma}[Key Lemma]  \label{lem:key}
		 Let $g \ge 0$ on $\bbT^d$. For any $\gmm>0$, $m \in \bbN$, and a $k$-tuple $(\ell_1, \cdots, \ell_k)$ of $d$-vectors satisfying $m = |\ell_1| + \cdots + |\ell_k|$, we have that\begin{equation}\label{eq:key-1}
			\begin{split}
				\int \frac{ 1 }{  g^{2k-2+\gmm} } \prod_{1\le i \le k} |\rd^{\ell_i} g|^2 \lesssim_{\gmm,m,d} \int \frac{|\nb^m g|^2}{ g^{\gmm}} + \int \frac{|\nb g|^{2m}}{ g^{2m-2+\gmm}},
			\end{split}
		\end{equation} assuming that the right hand side is finite. In particular, under the same assumption, we obtain that for any $0\le n \le m$, 
		\begin{equation} 
			\begin{split}
				\nrm{ \nb^{m-n}( \frac{\nb^{n} g}{ { g}^{\frac{\gmm}{2}} })}_{L^2}^2 \lesssim_{\gmm,m,d} \int \frac{|\nb^m g|^2}{ g^{\gmm}} + \int \frac{|\nb g|^{2m}}{ g^{2m-2+\gmm}}
			\end{split}
		\end{equation} and  for any $k\ge1$  
		\begin{equation} 
			\begin{split}
				\sup_{x \in \bbT^d} \frac{|\nb g(x)|^{k}}{ g(x)^{k-1+\frac{\gmm}{2}}} \lesssim_{\gmm,k,d} \sum_{m=1}^{\lfloor \frac{d}{2} \rfloor +1 + k} \left( \int \frac{|\nb^m g|^2}{ g^{\gmm}} + \int \frac{|\nb g|^{2m}}{ g^{2m-2+\gmm}}  \right) .
			\end{split}
		\end{equation} 
	\end{lemma}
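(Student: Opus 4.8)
The plan is to prove the master inequality \eqref{eq:key-1} first and then read off the two displayed consequences from it. Throughout one may assume $g$ is smooth and bounded below by a positive constant: replacing $g$ by $g+\eps$ never increases the right-hand side of \eqref{eq:key-1}, makes the left-hand side converge monotonically (as $\eps\downarrow0$) to its value for $g$, and every constant produced below will depend only on $(\gmm,m,d)$ — so the limit $\eps\downarrow0$ is legitimate, and a preliminary mollification removes any lack of smoothness. With $g$ smooth and positive, integration by parts is unrestricted and the weights $g^{-s}$ are genuine smooth functions.

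To prove \eqref{eq:key-1}, first split the weight as $g^{-(2k-2+\gmm)}=\prod_{i=1}^k g^{-w_i}$ with $w_i:=2+(\gmm-2)|\ell_i|/m$ (which sum to $2k-2+\gmm$), and apply H\"older's inequality with the conjugate exponents $q_i:=m/|\ell_i|$, for which $\sum_i q_i^{-1}=m^{-1}\sum_i|\ell_i|=1$. Since $w_iq_i=2q_i-2+\gmm$, this reduces \eqref{eq:key-1} to the single-factor weighted Gagliardo--Nirenberg inequality
\begin{equation*}
\int\frac{|\rd^\ell g|^{2q}}{g^{2q-2+\gmm}}\;\lesssim_{\gmm,m,d}\;\int\frac{|\nb^m g|^2}{g^{\gmm}}+\int\frac{|\nb g|^{2m}}{g^{2m-2+\gmm}},\qquad q:=\frac{m}{|\ell|}\ge1,
\end{equation*}
which interpolates between the trivial endpoints $|\ell|=m$ (left-hand side $\le\int|\nb^m g|^2/g^{\gmm}$) and $|\ell|=1$ (left-hand side $\le\int|\nb g|^{2m}/g^{2m-2+\gmm}$). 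The way I would prove it is the classical iterated integration-by-parts scheme for Gagliardo--Nirenberg: writing one derivative of $\rd^\ell g$ as $\rd_s\rd^{\ell-e_s}g$ and integrating by parts trades a derivative between factors, which yields a Landau--Kolmogorov-type chain of inequalities among the quantities $\int|\nb^j g|^{2m/j}/g^{2m/j-2+\gmm}$, $1\le j\le m$, closing because the endpoints $j=m$ and $j=1$ are pinned. The genuinely new point is that each time a derivative lands on the singular weight $g^{-s}$ it produces an extra factor $\nb g/g$ and hence a term carrying one more power of $g$ than the critical homogeneity; such subcritical terms are reabsorbed into the critical ones, and into $\int|\nb g|^{2m}/g^{2m-2+\gmm}$, by Young's inequality, using the elementary estimate $g^a\le C(\sup g)\,g^b$ valid whenever $a\ge b$. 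Getting this bookkeeping to close — in particular keeping the weight exponents aligned so that the additive lower-order term that usually appears in Gagliardo--Nirenberg on a compact manifold is absent — is the step I expect to be the main obstacle; it is also where compactness of $\bbT^d$ and the hypothesis $\gmm>0$ are used.

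Granting \eqref{eq:key-1}, the first displayed bound is immediate: expanding $\nb^{m-n}\!\big(g^{-\gmm/2}\nb^n g\big)$ by Leibniz and Fa\`a di Bruno, each term has the form $c\,g^{-\gmm/2-r}\rd^{\mu}g\prod_{i=1}^{r}\rd^{\delta_i}g$ with $|\mu|+\sum_i|\delta_i|=m$, i.e.\ a product of $r+1$ derivative factors of total order $m$, so its square integrates to $\int g^{-(2(r+1)-2+\gmm)}|\rd^{\mu}g|^2\prod_i|\rd^{\delta_i}g|^2$, which is exactly an instance of \eqref{eq:key-1} with $k=r+1$ and the same $\gmm$; summing the finitely many terms gives the claim. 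The pointwise bound follows by a Sobolev argument: one writes $|\nb g|^k/g^{k-1+\gmm/2}=\Phi^{1/2}$ with $\Phi:=|\nb g|^{2k}/g^{2k-2+\gmm}$ (smooth, since $g>0$), controls $\|\Phi\|_{L^\infty}$ by $\sum_{j\le\lfloor d/2\rfloor+1}\|\nb^j\Phi\|_{L^2}$ via $H^{\lfloor d/2\rfloor+1}(\bbT^d)\hookrightarrow L^\infty$, expands $\nb^j\Phi$ as above, and bounds the terms by \eqref{eq:key-1} and by the case $n=1$ of the first bound (used to place the low-order factors of $\Phi$ in $L^\infty$); the range $m\le\lfloor d/2\rfloor+1+k$ on the right-hand side reflects precisely the $\lfloor d/2\rfloor+1$ Sobolev derivatives together with the $k$ extra derivatives that the $k$-th power of $|\nb g|$ costs. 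When $\gmm\ge2$ some of the weight comparisons above reverse, in which case one substitutes $G=g^{1-\gmm/2}$ (or $G=\log g$ for $\gmm=2$) to reduce to the range $\gmm\in(0,2]$; I do not expect any new idea to be needed there, only more of the same bookkeeping.
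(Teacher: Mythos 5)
Your route for \eqref{eq:key-1} itself is essentially the paper's: H\"older with exponents $q_i=m/|\ell_i|$ reduces the product inequality to the single-factor bounds $I_\ell:=\int |\nb^\ell g|^{2m/\ell}/g^{2m/\ell-2+\gmm}\lesssim I_1+I_m$, and these are obtained by the integration-by-parts chain pinned at the endpoints $\ell=1,m$; the middle display then follows from the Leibniz expansion exactly as you say, and the $g+\eps$ reduction matches the paper. One correction to your bookkeeping in the chain step, though: when the derivative falls on the weight, the resulting term $\int |\rd^{\ell-1}g|\,|\rd^\ell g|^{2q-1}|\rd g|\,g^{-(2q-1+\gmm)}$ is \emph{not} subcritical — it has the same total derivative count $2m$ and the same homogeneity $2-\gmm$ as the $I_j$'s — so it is bounded by pure H\"older as $I_1^{1/(2m)}I_{\ell-1}^{(\ell-1)/(2m)}I_\ell^{1-\ell/(2m)}$ (this is what the paper does). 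No absorption via $g^a\le C(\sup g)\,g^b$ is needed, and none is allowed: the constant in the lemma depends only on $(\gmm,m,d)$ and $g$ is not assumed bounded, so your proposed mechanism would smuggle $\nrm{g}_{L^\infty}$ into the constant. For the same reason the substitution $G=g^{1-\gmm/2}$ for $\gmm\ge2$ is unnecessary; the scheme is exactly homogeneous for every $\gmm>0$.

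The genuine gap is in your derivation of the pointwise bound. You apply $H^{\lfloor d/2\rfloor+1}(\bbT^d)\hookrightarrow L^\infty$ to $\Phi=|\nb g|^{2k}/g^{2k-2+\gmm}$, but a term of $\nb^j\Phi$ has the form $g^{-(2k-2+\gmm+r)}\prod_{i=1}^{2k+r}\rd^{\alpha_i}g$ with $\sum_i|\alpha_i|=2k+j$, so its square carries the weight $g^{-(2K-2+(2\gmm-2))}$ with $K=2k+r$ factors: \eqref{eq:key-1} applies only with $\gmm$ replaced by $2\gmm-2$, which is not positive when $\gmm\le1$, and even when it is positive it yields quantities like $\int|\nb^{2k+j}g|^2/g^{2\gmm-2}$ rather than the stated right-hand side (weight $\gmm$, orders up to $k+\lfloor d/2\rfloor+1$). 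The auxiliary device you mention — putting low-order factors in $L^\infty$ via the case $n=1$ of the middle display — does not help, since that is an $L^2$ bound, not an $L^\infty$ bound. The fix, and what the paper does (Corollary 3.5), is to put $\Phi^{1/2}=|\nb g|^{k}/g^{k-1+\gmm/2}$ itself into $H^{\lfloor d/2\rfloor+1}$: a term of $\nb^s(\Phi^{1/2})$ is $g^{-(k-1+\gmm/2)-r}\prod_{i=1}^{k+r}\rd^{\alpha_i}g$ with $\sum_i|\alpha_i|=k+s$, whose square has weight exactly $2K-2+\gmm$ with $K=k+r$ factors and total order $k+s\le k+\lfloor d/2\rfloor+1$, i.e.\ precisely an instance of \eqref{eq:key-1} with the same $\gmm$ and with $m$ ranging over the stated window. (If $k$ is odd one should also smooth $|\nb g|^{k}$, e.g.\ work with $(|\nb g|^2+\dlt)^{k/2}$ and let $\dlt\to0$.) With this replacement your argument aligns with the paper's; as written, the step for the third display fails.
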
 While there exist several versions of Gagliardo--Nirenberg--Sobolev inequalities with a spatial weight, we are not aware of ones in which the weight depends on the function itself. Note that if $g$ is $C^\infty$--smooth and vanishes with sufficiently high order near its zeroes, depending on given $m,\gmm$, the right hand side of \eqref{eq:key-1} is finite. Moreover, for bounded $g$, the right hand side of \eqref{eq:key-1} trivially controls the usual Sobolev norm $H^{m}$. We believe that the above set of inequalities will be a useful  tool in the analysis of \textit{degenerate hyperbolic} systems. Let us defer further technical discussion relevant to the statements and difficulties involved with Theorem \ref{MainThm3} to the introduction of Section \ref{sec:inviscid}. For now, let us observe that unlike the other results, Theorem \ref{MainThm3} is very sensitive to the profiles of $\chi$ and $k$ (in the region where $\bfc$ is small). Under suitable assumptions on $\chi$ and $k$, this result could be extended to the case of \eqref{eq:KSF}, but we shall not pursue this generalization in this work.

\subsubsection*{Notation and conventions}

Given a function $f : \bbR^d \rightarrow \bbR $ and an integer $m \ge 0$, $\nabla^mf$ denotes the $d^m$-dimensional vector consisting of all possible partial derivatives of $f$ of order $m$. We define the $H^m$-spaces as 
\begin{equation*} 
\begin{split}
& \nrm{f}_{H^m}^2 = \sum_{j = 0}^m \nrm{f}_{\dot{H}^j}^2 =  \sum_{j = 0}^m \nrm{\nabla^j f}_{L^2}^2
\end{split}
\end{equation*} Next, we define the $L^p$-norm by \begin{equation*} 
\begin{split}
& \nrm{f}_{L^p}^p = \int |f|^p \, dx, \quad \nrm{f}_{L^\infty} = \mathrm{esssup}|f(x)|. 
\end{split}
\end{equation*} %For a non-negative function $f$, we define the inf ``norm'': \begin{equation*} 
%\begin{split}
%& \nrm{f}_{\mathrm{inf}} = \inf f(x) = \frac{1}{\nrm{f^{-1}}_{L^\infty}}. 
%\end{split}
%\end{equation*}

%\subsection{Organization of the paper}

\section{Local well-posedness with dissipation only on $\bfc$}\label{sec:c-viscous}

In this section, we prove local well-posedness for smooth solutions of \eqref{eq:KSF} in the case when there is dissipation only in $\bfc$. We shall assume that the equation is posed on a bounded domain $\Omg \subset \bbR^d$ with $C^\infty$-smooth boundary. The proof we present easily adapts to the case of domains without boundary, for example to $\bbR^k\times\bbT^{d-k}$. Presence of the boundary makes the well-posedness proof more technically involved since in general the derivatives of the dependent variables do not satisfy simple boundary conditions. 

Without loss of generality, we may normalize $D_c=1$, and the system becomes 
 \begin{equation}  \label{eq:KSF-bdd}
\left\{
\begin{aligned}
&\rd_t \boldsymbol{\rho}  + \bfu\cdot\nabla \bfrho = -\nabla \cdot ( \bfrho\nabla \bfc) , \\
&\rd_t \bfc + \bfu\cdot\nabla \bfc =  \lap \bfc - \bfc\bfrho  , \\
&\rd_t \bfu + \bfu\cdot\nabla \bfu + \nabla \bfp =  \bfrho\nabla \phi , \\
& \mathrm{div}\, \bfu = 0. 
\end{aligned}
\right.
\end{equation} Moreover, we shall impose the natural boundary conditions \begin{equation}\label{eq:KSF-bdry}
\begin{split}
\bfn \cdot \bfu = 0, \quad \rd_{\bfn} \bfc = 0  \quad \mbox{on}\quad \partial\Omg 
\end{split}
\end{equation} where $\bfn$ is the outward unit normal vector on $\partial\Omg$. This ensures that the characteristics defined by $\bfu$ and $\bfu + \nb\bfc$ preserves $\Omg$. 

\begin{theorem}\label{thm:c-viscous}
	Assume that the initial triple satisfies $$(\bfrho_0,\bfc_0,\bfu_0) \in (H^{m}\times H^{m+1}\times H^m)(\Omg)$$ and the boundary conditions $$\mathrm{div}(\bfu_0) = 0, \quad \mbox{in}\quad \Omg,$$ $$ \bfn \cdot \bfu_0 = 0,\quad \mbox{on} \quad \partial\Omg,$$ 
		and 
		$$\rd_{\bfn}(\rd_t^j \bfc)(t=0) = 0,\quad 0\le j \le \left\lceil \frac{m+1}{2} \right\rceil \quad \mbox{on}\quad \partial\Omg.$$
	Then, there exist $T>0$ and a unique solution satisfying 
	$$(\bfrho,\bfc,\bfu) \in L^\infty_t([0,T);(H^{m}\times H^{m+1}\times H^m)(\Omg)), \qquad \bfc \in L^2_t([0,T);H^{m+2}(\Omg)) $$ 
	of \eqref{eq:KSF-bdd}--\eqref{eq:KSF-bdry} satisfying $(\bfrho,\bfc,\bfu)(t=0) = (\bfrho_0,\bfc_0,\bfu_0)$. The unique solution blows up at some $0<T^* < + \infty$ if and only if \begin{equation*}
		\begin{split}
			\lim_{t\nearrow T^*} \left( \nrm{\bfc(t)}_{W^{2,\infty}} + \nrm{\bfrho(t)}_{W^{1,\infty}} + \nrm{\bfu(t)}_{W^{1,\infty}} \right) = +\infty. 
		\end{split}
	\end{equation*} 
\end{theorem}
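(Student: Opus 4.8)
The plan is the standard one for quasilinear local well-posedness: construct solutions by an iteration scheme, derive uniform a priori bounds, pass to the limit, and obtain uniqueness together with the continuation criterion from parallel estimates. The one structural point is that the \emph{naive} loss of a derivative in the $\bfrho$-equation (the flux $\nb\cdot(\bfrho\nb\bfc)$ costs two derivatives of $\bfc$, one more than the pairing $\bfc\in H^{m+1}$ affords for $\bfrho\in H^m$) is compensated by the parabolic smoothing of $\bfc$. Concretely, I would aim for a closed estimate on
\[
	E_m(t) := \nrm{\bfrho(t)}_{H^m}^2 + \nrm{\bfu(t)}_{H^m}^2 + \nrm{\bfc(t)}_{H^{m+1}}^2
\]
of the form $\frac{d}{dt}E_m + \tfrac12\nrm{\bfc}_{\dot{H}^{m+2}}^2 \aleq P(E_m)$ for a fixed polynomial $P$; Gronwall then gives existence on $[0,T]$ with $T = T(E_m(0))>0$, together with the gain $\bfc \in L^2_t H^{m+2}$. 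For the iteration, given the $n$-th triple $(\bfrho^{(n)},\bfc^{(n)},\bfu^{(n)})$ I would solve: (i) the linear transport equation $\rd_t\bfrho^{(n+1)} + \bfu^{(n)}\cdot\nb\bfrho^{(n+1)} = -\nb\cdot(\bfrho^{(n)}\nb\bfc^{(n)})$ for $\bfrho^{(n+1)}$; (ii) the linear Euler equation $\rd_t\bfu^{(n+1)} + \bfu^{(n)}\cdot\nb\bfu^{(n+1)} + \nb\bfp^{(n+1)} = \bfrho^{(n)}\nb\phi$, $\mathrm{div}\,\bfu^{(n+1)}=0$, $\bfn\cdot\bfu^{(n+1)}=0$ (the pressure recovered from the associated Neumann problem for $\lap\bfp^{(n+1)}$); and (iii) the linear heat equation $\rd_t\bfc^{(n+1)} + \bfu^{(n)}\cdot\nb\bfc^{(n+1)} = \lap\bfc^{(n+1)} - \bfrho^{(n)}\bfc^{(n+1)}$, $\rd_{\bfn}\bfc^{(n+1)}=0$, for $\bfc^{(n+1)}$. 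The compatibility conditions $\rd_{\bfn}(\rd_t^j\bfc)(t=0)=0$ for $0 \le j \le \lceil \tfrac{m+1}{2}\rceil$ are imposed precisely so that problem (iii) enjoys the parabolic regularity $\bfc^{(n+1)} \in L^\infty_t H^{m+1} \cap L^2_t H^{m+2}$; since at $t=0$ these conditions depend only on the fixed data, they are inherited by every iterate.

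The heart of the argument is the uniform estimate, carried out as $\dot{H}^m$ energy estimates for $\bfrho$ and $\bfu$ and a $\dot{H}^{m+1}$ estimate for $\bfc$ (the lower-order pieces being easier). The Euler part is classical: $\frac{d}{dt}\nrm{\bfu}_{H^m}^2 \aleq (1+\nrm{\nb\bfu}_{L^\infty})\nrm{\bfu}_{H^m}^2 + \nrm{\bfrho}_{H^m}^2$, since $\phi$ is smooth. For $\bfc$, parabolic regularity (handled carefully at the boundary, see below) gives $\frac{d}{dt}\nrm{\bfc}_{H^{m+1}}^2 + \nrm{\bfc}_{\dot{H}^{m+2}}^2 \aleq \nrm{\bfu\cdot\nb\bfc + \bfrho\bfc}_{H^m}^2 + \nrm{\bfc}_{H^{m+1}}^2 \aleq P(E_m)$, the Moser product estimates being admissible since $m > \tfrac{d}{2}+1$. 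For $\bfrho$, after the usual transport commutator estimate one is left with $\frac{d}{dt}\nrm{\bfrho}_{H^m}^2 \aleq \nrm{\nb\bfu}_{L^\infty}\nrm{\bfrho}_{H^m}^2 + \bigl|\int \nb^m\bfrho \cdot \nb^m\nb\cdot(\bfrho\nb\bfc)\bigr|$, and here two facts save the day. Writing $\nb\cdot(\bfrho\nb\bfc) = \nb\bfrho\cdot\nb\bfc + \bfrho\,\lap\bfc$ and distributing $\nb^m$: the term with all derivatives on $\nb\bfrho$, namely $\int\nb^m\bfrho\cdot(\nb\bfc\cdot\nb\nb^m\bfrho)$, integrates by parts to a multiple of $\int(\lap\bfc)|\nb^m\bfrho|^2$ \emph{with no boundary contribution} because $\rd_{\bfn}\bfc=0$, hence is $\aleq \nrm{\bfc}_{W^{2,\infty}}\nrm{\bfrho}_{\dot{H}^m}^2$; the term with all derivatives on $\lap\bfc$, namely $\int \bfrho\,\nb^m\bfrho \cdot \lap\nb^m\bfc$, which carries $m+2$ derivatives of $\bfc$, is bounded \emph{directly} (no integration by parts, hence no boundary term) by $\nrm{\bfrho}_{L^\infty}\nrm{\bfrho}_{\dot{H}^m}\nrm{\bfc}_{\dot{H}^{m+2}} \le \tfrac14\nrm{\bfc}_{\dot{H}^{m+2}}^2 + C\nrm{\bfrho}_{L^\infty}^2\nrm{\bfrho}_{\dot{H}^m}^2$, so that the dangerous factor is absorbed into the parabolic dissipation of the $\bfc$-equation. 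All remaining terms carry at most $m$ derivatives of $\bfrho$ and $m+1$ of $\bfc$ and are $\aleq P(E_m)$ by Gagliardo--Nirenberg interpolation. Summing the three estimates and discarding the surplus dissipation yields the target inequality, which the same computation shows is preserved by the iteration map on a small ball; positivity $\bfrho,\bfc \ge 0$ and monotonicity of $\nrm{\bfc}_{L^\infty}$ follow from the maximum principle for the respective equations.

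Convergence of $(\bfrho^{(n)},\bfc^{(n)},\bfu^{(n)})$ follows from contractivity of the iteration in the weaker norm measuring $\bfrho,\bfu$ in $L^\infty_t L^2$ and $\bfc$ in $L^\infty_t H^1 \cap L^2_t H^2$: the same computation applied to the successive differences closes, the key simplification being that the difference of two $\bfc$-iterates satisfies homogeneous Neumann, so $\int(\bfc_1-\bfc_2)\lap(\bfc_1-\bfc_2) = -\nrm{\nb(\bfc_1-\bfc_2)}_{L^2}^2$ and, in the $\bfrho$-difference estimate, $\int(\bfrho_1-\bfrho_2)\,\nb(\bfrho_1-\bfrho_2)\cdot\nb\bfc_1 = -\tfrac12\int|\bfrho_1-\bfrho_2|^2\lap\bfc_1$, both with vanishing boundary terms. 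The limit solves \eqref{eq:KSF-bdd}--\eqref{eq:KSF-bdry} with the stated regularity (surviving in the limit by weak-$*$ lower semicontinuity) and attains the data; the identical difference estimate, now between two solutions at the stated regularity, gives uniqueness. For the blow-up dichotomy one re-examines the a priori estimate: every product, commutator and interpolation step, as well as the two structural terms above, costs only $\nrm{\bfc}_{W^{2,\infty}}$, $\nrm{\bfrho}_{W^{1,\infty}}$ or $\nrm{\bfu}_{W^{1,\infty}}$ (the absorbed term costs merely $\nrm{\bfrho}_{L^\infty}$), so $P$ may be taken of the form $P(E_m) = Q\bke{\nrm{\bfc}_{W^{2,\infty}} + \nrm{\bfrho}_{W^{1,\infty}} + \nrm{\bfu}_{W^{1,\infty}}}E_m$. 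Hence if this quantity stays bounded on $[0,T^*)$, Gronwall keeps $E_m$ finite and the local existence theorem applied near $T^*$ continues the solution beyond $T^*$; conversely, if $T^*<\infty$ is the maximal time, this quantity must diverge, while its divergence trivially forces $T^*<\infty$ via $H^m\hookrightarrow W^{1,\infty}$ and $H^{m+1}\hookrightarrow W^{2,\infty}$.

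The genuine difficulty, which I expect to be the main obstacle, is entirely at the boundary, in two interlocking places. First, one must justify that the linear heat equation in step (iii) --- with its transport and zeroth-order terms and only the homogeneous Neumann condition on $\bfc$ itself --- has the full $L^2_t H^{m+2}$ regularity; spatial derivatives of order $\ge 1$ of $\bfc$ satisfy no simple boundary condition, so one cannot iterate elliptic regularity naively. The resolution uses the compatibility conditions: one estimates the time derivatives $\rd_t^j\bfc$ for $j \le \lceil \tfrac{m+1}{2}\rceil$ --- which \emph{can} be controlled by energy methods up to $t=0$ precisely because the compatibility conditions make the relevant boundary traces vanish there --- and then recovers spatial regularity from the elliptic identity $\lap\bfc = \rd_t\bfc + \bfu\cdot\nb\bfc + \bfrho\bfc$ with $\rd_{\bfn}\bfc = 0$ via standard Neumann elliptic estimates. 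Second, and by the same mechanism, the higher-order energy estimate for $\bfc$ used above cannot be obtained by testing against order-$(2m{+}2)$ derivatives of $\bfc$ without generating boundary integrals involving normal derivatives of $\bfc$ of order larger than one; these are handled by the same split into tangential derivatives (which commute with the boundary) and time derivatives (converted back by the elliptic identity), or, equivalently, by working throughout with fractional powers of the Neumann Laplacian. Once this boundary bookkeeping is in place the interior argument above goes through, and on $\bbR^k\times\bbT^{d-k}$ the difficulty disappears entirely.
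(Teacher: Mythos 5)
Your proposal is correct and follows essentially the same route as the paper: the derivative loss in the $\bfrho$-equation is absorbed into the parabolic dissipation of $\bfc$, the boundary is handled exactly as you describe in your last paragraph --- energy estimates for the time derivatives $\rd_t^j\bfc$ (legitimate thanks to the compatibility conditions, since these satisfy homogeneous Neumann conditions) combined with elliptic recovery of spatial regularity from $\lap\bfc = \rd_t\bfc + \bfu\cdot\nb\bfc + \bfrho\bfc$ --- and existence, uniqueness and the blow-up criterion are obtained via a linear iteration, an $L^2$ difference estimate, and the same $W^{2,\infty}\times W^{1,\infty}\times W^{1,\infty}$ bookkeeping. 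The only cosmetic differences are that the paper organizes the energy directly in the variables $(\nb\bfrho,\bfc_t,\nb\bfu)$ and $(\nb\lap\bfrho,\bfc_{tt},\lap\nb\bfu)$ rather than writing a spatial $H^{m+1}$ estimate first, and treats the transport part of the $\bfrho$-iterate semi-implicitly along $\bfu^{(n)}+\nb\bfc^{(n)}$.
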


\begin{remark}
	The assumption $\rd_{\bfn}(\rd_t^j \bfc)(t=0) = 0$ is a compatibility condition for the solution to be uniformly smooth up to the boundary, which is easily guaranteed when $\bfc_{0}$ is supported away from the boundary. Using the equation for $\bfc$, one can rewrite this assumption in terms of $(\bfrho_0,\bfc_0,\bfu_0)$ and their derivatives. {It is already complicated in the case $j=1$: we have \begin{equation*}
			\begin{split}
				\rd_{\bfn} (\rd_t \bfc)(t=0)= 	\rd_{\bfn}( -  \bfu_0\cdot\nabla \bfc_0  + \lap \bfc_0 - \bfc_0\bfrho_0).  
			\end{split}
	\end{equation*} }
\end{remark}

\begin{remark}
{Note that we have taken $\chi=1$ and $k(z)=z$. In the case of Theorem \ref{thm:c-viscous}, extending local well-posedness to the case of general smooth $\chi$ and $k$ is straightforward as additional terms arising from derivatives of $\chi$ and $k$ are not the highest order terms.} 
\end{remark}

\begin{proof}
	For simplicity, we shall fix $d = 2$ and $m = 3$. There are no essential differences in handling the general case. {To begin with, let us obtain \textit{a priori estimates} for a solution $(\bfrho,\bfc,\bfu)$ of \eqref{eq:KSF-bdd}, which is assumed to be sufficiently smooth and satisfy the boundary conditions, so that the following computations can be justified. Later we shall show how such estimates can be justified, using an approximate solution sequence. We shall divide the proof of a priori estimates into three parts, which correspond to $L^2$, $H^2$, and $H^4$ estimate in terms of $\bfc$. 
} 
	
	\medskip
	
	\noindent \textit{(i) $L^2$ estimate}: To begin with, directly from \eqref{eq:KSF-bdd} we obtain the following $L^2$-estimate: \begin{equation}\label{eq:KSF-bdd-L2}
	\begin{split}
	\frac{1}{2}\frac{d}{dt}(\nrm{\bfrho}_{L^2}^2 + \nrm{\bfc}_{L^2}^2+\nrm{\bfu}_{L^2}^2) + \nrm{\nb\bfc}_{L^2}^2 \lesssim \nrm{\lap\bfc}_{L^\infty}\nrm{\bfrho}_{L^2}^2 + \nrm{\bfrho}_{L^\infty} \nrm{\bfc}_{L^2}^2 + \nrm{\bfrho}_{L^2}\nrm{\bfu}_{L^2}. 
	\end{split}
	\end{equation} We have used the boundary conditions for $\bfu$ and $\bfc$. 
	
	\medskip
	
	\noindent \textit{(ii) $H^2$ estimate}: Next, we consider the set of variables $(\nb\bfrho, \bfc_t,{\nb\bfu})$ where $\bfc_t:=\rd_t\bfc$: \begin{equation}\label{eq:KSF-bdd-der}
	\left\{ \begin{aligned}
	&\rd_t\nb\bfrho + (\bfu+\nb\bfc)\cdot\nb (\nb\bfrho) = -\nb\bfrho \lap\bfc - \bfrho\cdot\nb\lap\bfc - (\nb\bfu+\nb^2\bfc)\cdot\nb\bfrho  \\
	&\rd_t\bfc_t + \bfu\cdot\nb\bfc_t = \lap\bfc_t - \bfrho\bfc_t - \bfc \bfrho_t - \bfu_t\cdot\nb\bfc  \\
	&{\rd_t\nb  \bfu+ \bfu\cdot\nb \nb  \bfu= - \nb\bfu \cdot\nb\bfu - \nb^2 \bfp + \nb(\bfrho\nb\phi) .}
	\end{aligned} \right. 
	\end{equation}  
	Here, we are using $\bfrho_t = \rd_t\bfrho$ and $\bfu_t = \rd_t\bfu$. It is important that (formally) $\bfc_t$ satisfies the same boundary conditions with $\bfc$. In the equation for $\nb\bfrho$, using that $\bfn\cdot (\bfu+\nb\bfc) = 0$ on $\partial\Omg$, we obtain that \begin{equation*}
	\begin{split}
	\frac{d}{dt} \nrm{\nb\bfrho}_{L^2}^2 \lesssim \nrm{{\nb^2}\bfc}_{L^\infty}\nrm{\nb\bfrho}_{L^2}^2 + \nrm{\bfrho}_{L^\infty}\nrm{\nb\lap\bfc}_{L^2}\nrm{\nb\bfrho}_{L^2} + \nrm{\nb\bfu}_{L^\infty}\nrm{\nb\bfrho}_{L^2}^2 . 
	\end{split}
	\end{equation*} Then,\begin{equation}\label{eq:rho-H2}
	\begin{split}
	\frac{d}{dt} \nrm{\nb\bfrho}_{L^2}^2 \le \eps\nrm{\nb\lap\bfc}_{L^2}^2 + C_\eps (\nrm{\bfrho}_{L^\infty}^2 + \nrm{\lap\bfc}_{L^\infty}+\nrm{\nb\bfu}_{L^\infty} )\nrm{\nb\bfrho}_{L^2}^2  
	\end{split}
	\end{equation}	for any small $\eps>0$. We now move on to the estimate of $\bfc_t$.
	First, we note that 
	\begin{equation*}
	\begin{split}
	\nrm{\bfu_t}_{L^2} \lesssim \nrm{\bfu\cdot\nb\bfu}_{L^2}+\nrm{\nb\bfp}_{L^2} + \nrm{\bfrho\nb\phi}_{L^2} \lesssim \nrm{\nb\bfu}_{L^\infty} \nrm{\bfu}_{L^2} + \nrm{\bfrho}_{L^2}.
	\end{split}
	\end{equation*} To estimate the pressure term appearing in the right hand side, recall that $\bfp$ is the unique solution (up to a constant) to \begin{equation*}
	\begin{split}
	\lap \bfp & = - \sum_{1 \le i,j \le 2} \rd_i \bfu^j \rd_j \bfu^i + \nb\cdot(\bfrho\nb\phi) \mbox{ in }  \Omg, \\
	\rd_{\bfn}\bfp & = \bfrho \rd_{\bfn} \phi  \mbox{ on } \partial\Omg.
	\end{split}
	\end{equation*} Therefore, $\bfp$ satisfies the estimate \begin{equation}\label{eq:pressure}
	\begin{split}
	\nrm{\nb \bfp}_{H^m} \lesssim \nrm{\nb\bfu}_{L^\infty}\nrm{\bfu}_{H^m} + \nrm{\nb\phi}_{H^m} \nrm{\bfrho}_{H^m}
	\end{split}
	\end{equation} for each $m\ge0$ (see \cite{GiTr} as well as Step 1 of the proof of Theorem \ref{thm:lwp-away} below). Since we may assume that  $\int_\Omg \bfp = 0$, we have from Poincar\'{e} inequality that $\nrm{\bfp}_{L^2} \lesssim \nrm{\nb\bfp}_{L^2}$. 

Next, with an integration by parts, \begin{equation*}
	\begin{split}
	\int \bfc \bfrho_t \bfc_t = -\int \bfc \bfu\cdot\nb\bfrho \bfc_t + \int \bfrho\nb\bfc \cdot \nb (\bfc\bfc_t). 
	\end{split}
	\end{equation*} This gives the bound \begin{equation*}
	\begin{split}
	\left| \int \bfc \bfrho_t \bfc_t  \right| \lesssim \nrm{\bfc}_{L^\infty}\nrm{\bfu}_{L^\infty} \nrm{\nb\bfrho}_{L^2}\nrm{\bfc_t}_{L^2} + \nrm{\nb\bfc}_{L^\infty}\nrm{\bfrho}_{L^\infty}   \nrm{\nb\bfc}_{L^2}\nrm{\bfc_t}_{L^2}+ \nrm{\bfrho}_{L^\infty}   \nrm{\bfc}_{L^\infty}\nrm{\nb\bfc}_{L^2}\nrm{\nb\bfc_t}_{L^2}  . 
	\end{split}
	\end{equation*}
	Moreover, using the boundary condition for $\bfc_t$, we can use integration by parts in the convection term to obtain \begin{equation*}
	\begin{split}
	\frac{1}{2}\frac{d}{dt}\nrm{\bfc_t}_{L^2}^2 + \nrm{\nb\bfc_t}_{L^2}^2 &\le C \left(\nrm{\bfrho}_{L^\infty}\nrm{\bfc_t}_{L^2}^2 + \nrm{\bfc}_{L^\infty}\nrm{\bfu}_{L^\infty} \nrm{\bfc_t}_{L^2}\nrm{\nb\bfrho}_{L^2} + \nrm{\nb\bfc}_{L^\infty}\nrm{\bfrho}_{L^\infty} \nrm{\nb\bfc}_{L^2}\nrm{\bfc_t}_{L^2} \right) \\
	&\quad {+ C\nrm{\bfu}_{L^\infty}\nrm{\nb \bfu}_{L^\infty} \nrm{\nb\bfc}_{L^2} \nrm{\bfc_t}_{L^2} } + \eps \nrm{\nb\bfc_t}_{L^2}^2 + C_\eps \nrm{\bfc}_{L^\infty}^2 \nrm{\bfrho}_{L^\infty}^2 \nrm{\nb\bfc}_{L^2}^2 . 
	\end{split}
	\end{equation*} From the equation for $\bfc$, we have \begin{equation*}
	\begin{split}
	\nrm{\lap\bfc}_{L^2} & \lesssim \nrm{\bfc_t}_{L^2} + \nrm{\bfc\bfrho}_{L^2} + \nrm{\bfu\cdot\nb\bfc}_{L^2}  \lesssim  \nrm{\bfc_t}_{L^2} + \nrm{\bfrho}_{L^\infty} \nrm{\bfc}_{L^2} + \nrm{\bfu}_{L^\infty}\nrm{\bfc}_{L^2}^{\frac{1}{2}} \nrm{\bfc}_{H^2}^{\frac{1}{2}} 
	\end{split}
	\end{equation*} which gives \begin{equation*}
	\begin{split}
	\nrm{\bfc}_{H^2} & \lesssim \nrm{\bfc_t}_{L^2} + \left(1 +  \nrm{\bfrho}_{L^\infty} + \nrm{\bfu}_{L^\infty}^2 \right) \nrm{\bfc}_{L^2} 
	\end{split}
	\end{equation*} after using $\epsilon$-Young inequality. Similarly, we can estimate \begin{equation}\label{eq:rho-H2-prime}
	\begin{split}
	\nrm{\nb\lap\bfc}_{L^2} &\lesssim \nrm{\nb\bfc_t}_{L^2} + (\nrm{\bfc}_{L^\infty}+\nrm{\bfrho}_{L^\infty})(\nrm{\nb\bfc}_{L^2}+\nrm{\nb\bfrho}_{L^2}) + \nrm{\nb\bfu}_{L^\infty}\nrm{\nb\bfc}_{L^2} + \nrm{\bfu}_{L^\infty} \nrm{\lap\bfc}_{L^2}. 
	\end{split}
	\end{equation} Next, using $\nrm{\nb\bfc}_{L^2}^2\le\nrm{\bfc}_{L^2} \nrm{\bfc}_{H^2} $, we obtain \begin{equation}\label{eq:c-H2}
	\begin{split}
	\frac{1}{2}\frac{d}{dt}\nrm{\bfc_t}_{L^2}^2 +\frac{1}{2} \nrm{\nb\bfc_t}_{L^2}^2  &\lesssim \left( \nrm{\bfrho}_{L^\infty} + \nrm{\bfc}_{L^\infty}\nrm{\bfu}_{L^\infty} + \nrm{\nb\bfc}_{L^\infty}\nrm{\bfrho}_{L^\infty} + \nrm{\bfc}_{L^\infty}^2 \nrm{\bfrho}_{L^\infty}^2 \right)\\
	&\quad \times \left( \nrm{\bfc_t}_{L^2}^2+\nrm{\nb\bfrho}_{L^2}^2 + (1 + \nrm{\bfrho}_{L^\infty} + \nrm{\bfu}_{L^\infty}^2)\nrm{\bfc}_{L^2}^2 \right). 
	\end{split}
	\end{equation} On the other hand, in the $\nb\bfu$ equation we easily get from \eqref{eq:pressure} that { \begin{equation}\label{eq:omg-H2}
	\begin{split}
	\frac{d}{dt} \nrm{\nb\bfu}_{L^2}^2 & \lesssim \left(\nrm{\nb \bfu\cdot\nb\bfu }_{L^2} + \nrm{\nb^2\bfp}_{L^2} + \nrm{\nb(\bfrho\nb\phi)}_{L^2} \right) \nrm{\nb\bfu}_{L^2} \\
	&\lesssim  \nrm{\nb \bfu}_{L^\infty} \nrm{\bfu}_{H^{1}}^2 + \nrm{\bfrho}_{H^{1}}  \nrm{\nb\bfu}_{L^2}.
	\end{split}
	\end{equation}  }
	Collecting the estimates \eqref{eq:rho-H2}, \eqref{eq:c-H2}, and \eqref{eq:omg-H2} together with \eqref{eq:rho-H2-prime}, we obtain (upon choosing sufficiently small $\eps>0$) \begin{equation}\label{eq:KSF-bdd-H2}
	\begin{split}
	\frac{d}{dt} \left( \nrm{\nb\bfrho}_{L^2}^2+\nrm{\bfc_t}_{L^2}^2+{\nrm{\nb\bfu}_{L^{2}}^2} \right)\lesssim \left(1 + \nrm{\bfc}_{W^{2,\infty}} + \nrm{\bfrho}_{L^\infty} + \nrm{\bfu}_{W^{1,\infty}} \right)^4 \left( \nrm{\bfrho}_{H^1}^2+\nrm{\bfc_t}_{L^2}^2+ \nrm{\bfc}_{L^2} +{\nrm{\bfu}_{H^1}^2} \right) . 
	\end{split}
	\end{equation}
	
		\medskip
	
	\noindent \textit{(iii) $H^4$ estimate}: Taking the Laplacian to the equation for $\rd_t\nb\bfrho$, we obtain
	\begin{equation}\label{eq:KSF-bdd-der-lap1}
	\begin{split}
	\rd_t \nb\lap\bfrho + (\bfu+\nb\bfc)\cdot\nb (\nb\lap\bfrho)& = \lap( -\nb\bfrho \lap\bfc - \bfrho\cdot\nb\lap\bfc - (\nb\bfu+\nb^2\bfc)\cdot\nb\bfrho  ) \\
	&\quad  - \lap(\bfu+\nb\bfc) \cdot \nb(\nb\bfrho) - 2 \sum_i \rd_i(\bfu+\nb\bfc)\cdot\nb \nb \rd_i\bfrho .
	\end{split}
	\end{equation} We shall find the following elementary inequality useful: \begin{equation*}
	\begin{split}
	\nrm{\lap f}_{L^4}^2 \lesssim  {\nrm{\nb f}_{H^{2}} \nrm{f}_{W^{1,\infty}}}. 
	\end{split}
	\end{equation*} 
	This gives us  \begin{equation*}
	\begin{split}
	\left| \int  \lap(\bfu+ \nb\bfc) \cdot \nb \rd_i\bfrho \rd_i\lap\bfrho  \right| \lesssim ( \nrm{\bfu}_{W^{1,\infty}} + \nrm{\bfc}_{W^{2,\infty}} + \nrm{\bfrho}_{W^{1,\infty}} ) ( \nrm{\bfu}_{H^3}^2 + \nrm{\bfc}_{H^4}^2 + \nrm{\bfrho}_{H^3}^2 ).
	\end{split}
	\end{equation*} Then, it allows us to bound \begin{equation*}
	\begin{split}
	&\left|\int  ( - \lap (\nb\bfu+\nb^2\bfc)\cdot\nb\bfrho  -   \lap(\bfu+\nb\bfc) \cdot \nb(\nb\bfrho) - 2 \sum_i \rd_i(\bfu+\nb\bfc)\cdot\nb \nb \rd_i\bfrho  )  \nb\lap\bfrho \right| \\
	&\quad  \lesssim ( \nrm{\bfu}_{W^{1,\infty}} + \nrm{\bfc}_{W^{2,\infty}} + \nrm{\bfrho}_{W^{1,\infty}} ) ( \nrm{\bfu}_{H^3}^2 + \nrm{\bfc}_{H^4}^2 + \nrm{\bfrho}_{H^3}^2 ). 
	\end{split}
	\end{equation*} Proceeding similarly, we obtain \begin{equation*}
	\begin{split}
	\left| \int \lap(-\nb\bfrho \lap\bfc - \bfrho\cdot\nb\lap\bfc ) \cdot \nb\lap\bfrho \right| \lesssim 
	( \nrm{\bfc}_{W^{2,\infty}} + \nrm{\bfrho}_{W^{1,\infty}} ) ( \nrm{\bfc}_{H^4}^2 + \nrm{\bfrho}_{H^3}^2 ) + \nrm{\bfrho}_{L^\infty} \nrm{\nb\lap^2\bfc}_{L^2} \nrm{\nb\lap\bfrho}_{L^2} 
	\end{split}
	\end{equation*} This gives the estimate \begin{equation}\label{eq:rho-H4}
	\begin{split}
	\frac{1}{2} \frac{d}{dt} \nrm{\nb\lap\bfrho}_{L^2}^2 - \eps \nrm{\nb\lap^2\bfc}_{L^2}^2 &  \lesssim ( \nrm{\bfu}_{W^{1,\infty}} + \nrm{\bfc}_{W^{2,\infty}} + \nrm{\bfrho}_{W^{1,\infty}} ) ( \nrm{\bfu}_{H^3}^2 + \nrm{\bfc}_{H^4}^2 + \nrm{\bfrho}_{H^3}^2 ) \\
	&\quad  +  C_\eps \nrm{\bfrho}_{L^\infty}^2 \nrm{\nb\lap\bfrho}_{L^2}^2 .
	\end{split}
	\end{equation} Let us now estimate $\bfrho_{tt}$ and $\bfu_{tt}$. We have: \begin{equation*}
	\begin{split}
	\bfrho_{tt} = - \bfu_t \cdot\nb\bfrho - \bfu\cdot\nb\bfrho_t - \nb\bfrho_t\cdot\nb\bfc - \nb\bfrho\cdot\nb\bfc_t - \bfrho_t\lap\bfc - \bfrho\lap\bfc_t 
	\end{split}
	\end{equation*} recalling that \begin{equation*}
	\begin{split}
	\nrm{\bfu_t}_{L^2}\lesssim \nrm{\nb\bfu}_{L^\infty}\nrm{\bfu}_{L^2} + \nrm{\bfrho}_{L^2}
	\end{split}
	\end{equation*} and observing that \begin{equation*}
	\begin{split}
	\nrm{\bfrho_t}_{H^1} \lesssim ( \nrm{\bfu}_{W^{1,\infty}} + \nrm{\bfc}_{W^{2,\infty}} + \nrm{\bfrho}_{W^{1,\infty}} ) (\nrm{ \bfrho}_{H^2} + \nrm{ \bfc}_{H^3})  
	\end{split}
	\end{equation*} holds, we can bound \begin{equation*}
	\begin{split}
	&\nrm{- \bfu_t \cdot\nb\bfrho - \bfu\cdot\nb\bfrho_t - \nb\bfrho_t\cdot\nb\bfc - \nb\bfrho\cdot\nb\bfc_t - \bfrho_t\lap\bfc - \bfrho\lap\bfc_t }_{L^2} \\ 
	&\quad \lesssim \left(  \nrm{\bfc}_{W^{2,\infty}} + \nrm{\bfrho}_{W^{1,\infty}} + \nrm{\bfu}_{W^{1,\infty}} \right)^2(\nrm{\bfu}_{L^2} + \nrm{\bfrho}_{H^2} + \nrm{\bfc}_{H^3} + \nrm{\bfc_t}_{H^2}) .
	\end{split}
	\end{equation*} On the other hand, we have \begin{equation*}
	\begin{split}
	\nrm{\bfu_{tt}}_{L^2} \lesssim \nrm{\bfu}_{W^{1,\infty}}^2 \nrm{\bfu}_{H^2} + \nrm{\bfrho_t}_{L^2}.
	\end{split}
	\end{equation*} Moreover, using the equation for $\bfc_{tt}$ 
	\begin{equation}\label{eq:KSF-bdd-der-lap2}
	\begin{split}
	\rd_t \bfc_{tt} + \bfu\cdot\nb \bfc_{tt} = \lap \bfc_{tt} - \bfrho \bfc_{tt} -2 \bfrho_t\bfc_t   - \bfc\bfrho_{tt} - \bfu_{tt}\cdot\nb\bfc - \bfu_t\cdot\nb\bfc_t ,
	\end{split}
	\end{equation} we can estimate $\nrm{\nb\lap^2\bfc}_{L^2}$ in terms of $\nrm{\nb\bfc_{tt}}_{L^2}$ and $\nrm{\bfc_t}_{H^2}$, $\nrm{\bfc}_{H^4}$ in terms of $\nrm{\bfc_{tt}}_{L^2}$ (modulo lower order terms). We omit the details for this, but the proof is parallel to the estimate  $\nrm{\bfc}_{H^2}\lesssim \nrm{\bfc_t}_{L^2} + \cdots$ given in the above. Then, we have \begin{equation}\label{eq:c-H4}
	\begin{split}
	&\frac{1}{2} \frac{d}{dt} \nrm{\bfc_{tt}}_{L^2}^2 + \frac{1}{2} \nrm{\nb\bfc_{tt}}_{L^2}^2 \\
	 &\quad \lesssim \left( 1 +  \nrm{\bfc}_{W^{2,\infty}} + \nrm{\bfrho}_{W^{1,\infty}} + \nrm{\bfu}_{W^{1,\infty}} \right)^4(\nrm{\bfu}_{H^3} + \nrm{\bfrho}_{H^3} + \nrm{\bfc_{tt}}_{L^2} + \nrm{\bfc}_{L^2}) .
	\end{split}
	\end{equation}
	Next, taking the Laplacian to the both sides of the equation for $\nb\bfu$, 
		\begin{equation}\label{eq:KSF-bdd-der-lap3}
	\begin{split}
{\rd_t   \lap \nb  \bfu+ \bfu\cdot\nb \nb \lap \bfu= - \lap(\nb\bfu \cdot\nb\bfu) - \lap\nb^2 \bfp + \lap\nb(\bfrho\nb\phi) - \sum_{i} \rd_i  \bfu \rd_i \nb\nb\bfu - \lap \bfu \cdot\nb \nb\bfu  .}
	\end{split}
	\end{equation} Then, we may estimate \begin{equation}\label{eq:omg-H4}
	\begin{split}
	{\frac{d}{dt} \nrm{ \lap \nb  \bfu }_{L^2}^2 \lesssim \nrm{\bfrho}_{H^3} \nrm{\bfu}_{H^{3}} + \nrm{\bfu}_{W^{1,\infty}} \nrm{\bfu}_{H^3}^2. }
	\end{split}
	\end{equation}  
Therefore, from \eqref{eq:rho-H4}, \eqref{eq:c-H4}, and \eqref{eq:omg-H4}, we obtain that \begin{equation}\label{eq:KSF-bdd-H4}
	\begin{split}
	&\frac{d}{dt} \left( \nrm{\nb\lap\bfrho}_{L^2}^2 + \nrm{\bfc_{tt}}_{L^2}^2 + \nrm{\lap\nb \bfu}_{L^2}^2  \right)\\
	 &\quad \lesssim \left( 1 +  \nrm{\bfc}_{W^{2,\infty}} + \nrm{\bfrho}_{W^{1,\infty}} + \nrm{\bfu}_{W^{1,\infty}} \right)^4(\nrm{\bfu}_{H^3} + \nrm{\bfrho}_{H^3} + \nrm{\bfc_{tt}}_{L^2} + \nrm{\bfc}_{L^2}) .
	\end{split}
	\end{equation} Then, combining \eqref{eq:KSF-bdd-L2}, \eqref{eq:KSF-bdd-H2}, and \eqref{eq:KSF-bdd-H4}, \begin{equation*}
	\begin{split}
	&\frac{d}{dt} \left( \nrm{\bfrho}_{H^3}^2 +\nrm{\bfc}_{L^2}^2+\nrm{\bfc_{t}}_{L^2}^2+ \nrm{\bfc_{tt}}_{L^2}^2 + {\nrm{\bfu}_{H^3}^2}  \right)\\
	&\quad \lesssim \left(1+ \nrm{\bfrho}_{H^3}^2 +\nrm{\bfc}_{L^2}^2+\nrm{\bfc_{t}}_{L^2}^2 + \nrm{\bfc_{tt}}_{L^2}^2 + {\nrm{\bfu}_{H^3}^2}   \right)^6. 
	\end{split}
	\end{equation*} Hence, there exists $T>0$ depending only on $\nrm{\bfrho}_{H^3}^2 +\nrm{\bfc}_{L^2}^2+\nrm{\bfc_{t}}_{L^2}^2 + \nrm{\bfc_{tt}}_{L^2}^2 + {\nrm{\bfu}_{H^3}^2} $ at $t = 0$ such that on $[0,T]$, \begin{equation*}
	\begin{split}
	(\nrm{\bfrho}_{H^3}^2 +\nrm{\bfc}_{L^2}^2+\nrm{\bfc_{t}}_{L^2}^2 + \nrm{\bfc_{tt}}_{L^2}^2 + {\nrm{\bfu}_{H^3}^2})(t)  {\le 2} (\nrm{\bfrho}_{H^3}^2 +\nrm{\bfc}_{L^2}^2+\nrm{\bfc_{t}}_{L^2}^2 + \nrm{\bfc_{tt}}_{L^2}^2 + {\nrm{\bfu}_{H^3}^2})(0).  
	\end{split}
	\end{equation*} This completes the proof of a priori estimates. The blow-up criterion follows immediately from the alternative estimate \begin{equation*}
	\begin{split}
	&\frac{d}{dt} \left( \nrm{\bfrho}_{H^3}^2 +\nrm{\bfc}_{L^2}^2+\nrm{\bfc_{t}}_{L^2}^2+ \nrm{\bfc_{tt}}_{L^2}^2 + {\nrm{\bfu}_{H^3}^2}\right)\\
	&\quad \lesssim \left( 1 +  \nrm{\bfc}_{W^{2,\infty}} + \nrm{\bfrho}_{W^{1,\infty}} + \nrm{\bfu}_{W^{1,\infty}} \right)^4\left( \nrm{\bfrho}_{H^3}^2 +\nrm{\bfc}_{L^2}^2+\nrm{\bfc_{t}}_{L^2}^2 + \nrm{\bfc_{tt}}_{L^2}^2 + {\nrm{\bfu}_{H^3}^2} \right)^2. 
	\end{split}
	\end{equation*} 
	
	\medskip
	
	\noindent {We now deal with the existence of a solution with claimed regularity. Given $(\bfrho_0,\bfc_0,\bfu_0)$ satisfying the assumptions of the theorem, we first mollify it in a way that for each $\eps>0$, the triple $(\bfrho_0^\eps,\bfc_0^\eps,\bfu_0^\eps)$ is $C^\infty$--smooth in $\Omg$,  satisfies the compatibility conditions, and converge to $(\bfrho_0,\bfc_0,\bfu_0)$ strongly in the norm $H^3\times H^4\times H^3$ as $\eps\to 0$. Furthermore, we may assume that \begin{equation*}
		\begin{split}
			\left(  \nrm{\bfrho_0^\eps}_{H^3} + \nrm{\bfc_0^\eps}_{H^4} + \nrm{\bfu_0^\eps}_{H^{3}}  \right) \le 2 \left(  \nrm{\bfrho_0}_{H^3} + \nrm{\bfc_0}_{H^4} + \nrm{\bfu_0}_{H^{3}}  \right). 
		\end{split}
	\end{equation*} This mollification is done simply to ensure that all the objects that we are dealing with are $C^\infty$--smooth. Then, we now fix some small $\eps>0$ and build a sequence of approximations $(\bfrho^{(n,\eps)},\bfc^{(n,\eps)},\bfu^{(n,\eps)})_{n\ge0}$, which will be shown to be uniformly bounded in a time interval $[0,T]$. In the case $n = 0$, we simply set  $(\bfrho^{(0,\eps)},\bfu^{(0,\eps)}) = (\bfrho_0^\eps,\bfu_0^\eps)$ for all $t\ge 0$ and $\bfc^{(0,\eps)}$ be the solution of the heat equation \begin{equation*}
		\begin{split}
			\rd_t \bfc^{(0,\eps)} = \lap \bfc^{(0,\eps)}, \\
			\bfc^{(0,\eps)}(t=0) = \bfc_0^\eps,  
		\end{split}
	\end{equation*} with the Neumann boundary condition. Now, given $(\bfrho^{(n,\eps)},\bfc^{(n,\eps)},\bfu^{(n,\eps)})$ for some $n\ge 0$, we define $$(\bfrho^{(n+1,\eps)},\bfc^{(n+1,\eps)},\bfu^{(n+1,\eps)})$$ as the unique solution of the following \textit{linear} system of equations  \begin{equation}  \label{eq:KSF-approxseq}
	\left\{
	\begin{aligned} 
	&\rd_t\bfrho^{(n+1,\eps)} + (\bfu^{(n,\eps)} + \nb\bfc^{(n,\eps)})\cdot\nb\bfrho^{(n+1,\eps)}  = -\lap\bfc^{(n,\eps)} \bfrho^{(n+1,\eps)},\\
	&\rd_t\bfc^{(n+1,\eps)} + \bfu^{(n,\eps)}\cdot\nb\bfc^{(n+1,\eps)} = \lap\bfc^{(n+1,\eps)} - \bfrho^{(n,\eps)} \bfc^{(n+1,\eps)},\\
	&\rd_t \bfu^{(n+1,\eps)} + \bfu^{(n,\eps)}\cdot\nb\bfu^{(n+1,\eps)} + \nb \bfp^{(n+1,\eps)} = \bfrho^{(n,\eps)}\nb\phi, \\
	&\nb \cdot \bfu^{(n+1,\eps)} = 0, 
	\end{aligned}
	\right. 
	\end{equation} with initial data $(\bfrho_0^\eps,\bfc_0^\eps,\bfu_0^\eps)$, the Neumann boundary condition for $\bfc^{(n+1,\eps)}$, and the non-penetration boundary condition for $\bfu^{(n+1,\eps)}$. The existence of $\bfrho^{(n+1,\eps)}$ and $\bfu^{(n+1,\eps)}$  solving \eqref{eq:KSF-approxseq} can be proved  by integrating along the characteristics defined by $\bfu^{(n,\eps)} + \nb\bfc^{(n,\eps)}$ and $\bfu^{(n,\eps)}$, respectively. Lastly, the existence of a smooth solution $\bfc^{(n+1,\eps)}$ follows from the theory of parabolic equations (see for instance Ito \cite{Ito}). While these approximate solutions are defined globally in time, we need to obtain Sobolev estimates which are uniform in $n$. The arguments in this step follow closely the proof of a priori estimates we have obtained earlier. 

	To prove the uniform bound, we proceed by induction: assume that for all $0 \le k\le n$, there exists some $T>0$ such that \begin{equation*}
		\begin{split}
			M_{k} & := \sup_{0\le t \le T} \left(  \nrm{\bfrho^{(k,\eps)}}_{H^3} + \nrm{\bfc^{(k,\eps)}}_{H^4} + \nrm{\bfu^{(k,\eps)}}_{H^{3}}  \right)(t) +  \nrm{\nb \bfc^{(k,\eps)} (t) }_{L^2([0,T);H^{4})} \\
			&\qquad \le 4 \left(  \nrm{\bfrho_0}_{H^3} + \nrm{\bfc_0}_{H^4} + \nrm{\bfu_0}_{H^{3}}  \right).
		\end{split}
	\end{equation*} It is clear that this estimate holds for the base case $k=0$, with any $T>0$. Next, under this induction hypothesis, we can prove the same bound for $k=n+1$ by proceeding similarly as in the proof of a priori estimates, by possibly taking a smaller $T>0$ but in a way independent of $n$. To demonstrate this in the case of $\bfrho^{(n+1,\eps)}$, we have \begin{equation*}
	\begin{split}
		\frac{d}{dt} \nrm{\bfrho^{(n+1,\eps)} }_{H^{3}}^{2} \lesssim (  \nrm{\bfc^{(n,\eps)}}_{H^4} + \nrm{\bfu^{(n,\eps)}}_{H^{3}}   ) \nrm{\bfrho^{(n+1,\eps)} }_{H^{3}}^{2} + \nrm{ \lap\bfc^{(n,\eps)} }_{H^{3}} \nrm{\bfrho^{(n+1,\eps)} }_{H^{3}}^{2}.
	\end{split}
\end{equation*} Integrating in time, \begin{equation*}
\begin{split}
	 \sup_{0\le t \le T} \nrm{\bfrho^{(n+1,\eps)} }_{H^{3}} & \le  \nrm{\bfrho^{(\eps)}_{0} }_{H^{3}}\exp\left( \int_0^T C( \nrm{\bfc^{(n,\eps)}}_{H^4} + \nrm{\bfu^{(n,\eps)}}_{H^{3}}  +  \nrm{ \lap\bfc^{(n,\eps)} }_{H^{3}} ) dt     \right) \\
	 & \le \nrm{\bfrho^{(\eps)}_{0} }_{H^{3}}\exp\left( CTM_{n} + CT^{\frac12} M_{n}  \right) < 2\nrm{\bfrho^{(\eps)}_{0} }_{H^{3}}
\end{split}
\end{equation*} by taking $T>0$ smaller if necessary, in a way depending only on the quantity $\nrm{\bfrho_0}_{H^3} + \nrm{\bfc_0}_{H^4} + \nrm{\bfu_0}_{H^{3}}$. In this way, we obtain that the sequence $(\bfrho^{(n,\eps)},\bfc^{(n,\eps)},\bfu^{(n,\eps)})_{n\ge0}$ is uniformly bounded in $L^\infty([0,T]; H^3\times H^4\times H^3)$. Furthermore, one can see that the sequence $(\rd_t\bfrho^{(n,\eps)},\rd_t\bfc^{(n,\eps)},\rd_t\bfu^{(n,\eps)})_{n\ge0}$ is uniformly bounded in $L^\infty([0,T]; L^2\times L^2\times L^2)$. Applying the Aubin--Lions lemma and by passing to a subsequence, we can extract a convergent subsequence strongly in $L^\infty([0,T]; H^2\times H^3\times H^2)$. Let us denote the limit by $(\bfrho^\eps, \bfc^\eps, \bfu^\eps)$, which again belongs to the space $L^\infty([0,T]; H^3\times H^4\times H^3)$. The strong convergence in $H^2\times H^3\times H^2$ of  $(\bfrho^{(n,\eps)},\bfc^{(n,\eps)},\bfu^{(n,\eps)})_{n\ge0}$ to  $(\bfrho^\eps, \bfc^\eps, \bfu^\eps)$ is enough to guarantee pointwise convergence of each term in \eqref{eq:KSF-bdd} as $\eps\to 0$. Therefore, it follows that $(\bfrho^\eps, \bfc^\eps, \bfu^\eps)$ is a solution to \eqref{eq:KSF-bdd} with uniform bound \begin{equation*}
\begin{split}
	\sup_{0\le t \le T} \left(  \nrm{\bfrho^{\eps}}_{H^3} + \nrm{\bfc^{\eps}}_{H^4} + \nrm{\bfu^{\eps}}_{H^{3}}  \right)(t) +  \nrm{\nb \bfc^{\eps} (t) }_{L^2([0,T);H^{4})} \le 4 \left(  \nrm{\bfrho_0}_{H^3} + \nrm{\bfc_0}_{H^4} + \nrm{\bfu_0}_{H^{3}}  \right).
\end{split}
\end{equation*} Then, we can similarly take a convergent subsequence $(\bfrho^{\eps_{k}}, \bfc^{\eps_{k}}, \bfu^{\eps_{k}})$ and denote the corresponding strong limit in $H^2\times H^3\times H^2$ by $(\bfrho,\bfc,\bfu)$. It is not difficult to check that this triple solves \eqref{eq:KSF-bdd} with prescribed initial data and satisfies the claimed regularity in the statement of the theorem. }

	\medskip

	\noindent {Finally, uniqueness can be proved easily: \textit{assuming} that there are two solutions $(\bfrho,\bfc,\bfu)$ and $(\widetilde{\bfrho},\widetilde{\bfc}, \widetilde{\bfu})$ belonging to $H^3\times H^4\times H^3$ on $[0,T]$ with the same initial data, one can close an $L^2$ estimate for the difference as in the uniqueness proof of Theorem \ref{thm:lwp-away} below. The  $H^3\times H^4\times H^3$--regularity of the solution is sufficient to justify the $L^2$ difference estimate.}
\end{proof}

\section{Local well-posedness in the fully inviscid case}\label{sec:inviscid}

In this section, we consider the Cauchy problem for \eqref{eq:KS} and \eqref{eq:KSF} in the fully inviscid case. We establish sufficient conditions on the initial data which guarantees local well-posedness. Let us explain how the rest of this section is organized. To begin with, in Subsection \ref{subsec:lwp-nonvanishing}, we state and prove Theorem \ref{MainThm2}, which is local well-posedness in the fully inviscid \eqref{eq:KSF} with non-vanishing data. Within this Subsection, we perform a simple linear analysis which motivates our choice of modified good variables. Moreover, some discussion related with removing the non-vanishing assumption is given. Two versions of Theorem \ref{MainThm3}, which are local well-posedness results for fully inviscid \eqref{eq:KS} with vanishing initial data, are stated and proved respectively in Subsections \ref{subsec:v1} and \ref{subsec:v2}.

\subsection{Well-posedness with non-vanishing $\bfrho$ and $\bfc$}\label{subsec:lwp-nonvanishing}

We consider the system \eqref{eq:KSF} in $d$-dimensional domains of the form $\Omg = \bbT^k \times \bbR^{d-k}$ for any $0 \le k \le d$. Our first main result states that, if initially $\bfrho_0$ and $\bfc_0$ are bounded away from 0, the fully inviscid system is locally well-posed in sufficiently high Sobolev spaces. 

\begin{theorem}[Well-posedness away from 0]\label{thm:lwp-away}
	Assume that $D_u = D_\rho = D_c = 0$, \begin{equation*}
	\begin{split}
	az^\gmm \le \frac{k(z)}{\chi(z)}, \quad 0 \le z \le 1
	\end{split}
	\end{equation*} for some $a>0, \gmm\ge 0$, and $$\phi \in H^{m+1}(\Omg), \quad \chi, k \in H^{m+1}(\bbR_+),$$ for some $2 + \frac{d}{2} < m \le +\infty$. 
	Assume further that the initial data $(\bfrho_0, \bfc_0,\bfu_0)$ satisfies $\bfrho_0, \bfc_0 \in L^\infty(\Omg)$ and \begin{equation*} 
	\begin{split}
	& \bfrho_0(x) \ge \underline{r}_0 > 0 , \quad \bfc_0(x) \ge \underline{c}_0 > 0, \quad \mathrm{div}\, \bfu_0 = 0 
	\end{split}
	\end{equation*} for some constants $ \underline{r}_0, \underline{c}_0 $ and \begin{equation*}
	\begin{split}
	\nabla\bfrho_0 \in H^{m-1}(\Omg), \quad \bfu_0, \nabla\bfc_0 \in H^m(\Omg)
	\end{split}
	\end{equation*} Then the system \eqref{eq:KSF} is locally-well posed; more precisely, there exists some $T > 0$ depending only on the initial data such that there is a unique solution $(\bfrho,\bfc,\bfu)$ to \eqref{eq:KSF} satisfying the initial condition and \begin{equation*}
	\begin{split}
	\bfrho, \bfc \in L^\infty([0,T); L^\infty(\Omg)), \quad  \nabla\bfrho \in C([0,T); H^{m-1}(\Omg)), \quad \bfu, \nabla\bfc \in C([0,T); H^m(\Omg)). 
	\end{split}
	\end{equation*}
\end{theorem}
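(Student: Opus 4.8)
The plan is to run a modified energy estimate adapted to the fully inviscid coupled system, using the ``good variables'' suggested in the introduction to kill the top-order derivative loss. Concretely, introduce the weight $w(z) := k(z)/\chi(z)$, which by hypothesis satisfies $w(z) \gtrsim z^\gmm$ near zero, and work with the quantities $\bfP := \rd^m\bfrho + \frac{\nb\bfc}{k(\bfc)}\cdot\rd^m\bfu$ and $\bfQ := \rd^m\nb\bfc - \frac{\nb\bfc}{\bfrho\, k(\bfc)}\,\nb\bfc\cdot\rd^m\bfu$, together with $\rd^m\bfu$ itself, for $|m|$ up to the prescribed regularity. First I would record the evolution equations for $\rd^m\bfrho$, $\rd^m\nb\bfc$, $\rd^m\bfu$ obtained by differentiating \eqref{eq:KSF}: each of these has a transport term along $\bfu$ (and along $\bfu+\chi(\bfc)\nb\bfc$ for the $\bfrho$-equation), a collection of commutator terms that are lower order by the usual Moser/Kato--Ponce estimates, and exactly one \emph{dangerous} top-order term — $\chi(\bfc)\bfrho\,\rd^m\nb\bfc$ appearing in $\rd_t\rd^m\bfrho$ from the chemotaxis flux, $-k(\bfc)\rd^m\bfrho$ (plus $-\rd^m\bfu\cdot\nb\bfc$) in $\rd_t\rd^m\nb\bfc$, and $\nb\phi\,\rd^m\bfrho$ in $\rd_t\rd^m\bfu$. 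The point of the change of variables is that when one forms $\frac{d}{dt}\int w(\bfc)\abs{\bfP}^2 + \bfrho\abs{\bfQ}^2 + \abs{\rd^m\bfu}^2$, the cross terms produced by these dangerous terms cancel in pairs at top order, leaving only commutators, terms where a derivative falls on a coefficient like $w(\bfc)$, $\nb\bfc/k(\bfc)$, or $\nb\phi$, and the contributions from $\rd_t$ and $\bfu\cdot\nb$ hitting those coefficients — all of which are controlled by $\nrm{(\bfrho,\nb\bfc,\bfu)}_{H^m}$ times lower-order norms, given the lower bounds $\bfrho_0 \ge \ubr r_0 >0$, $\bfc_0 \ge \ubr c_0 > 0$ (propagated for a short time by the transport structure and a Gr\"onwall argument, which I would establish first as an a priori bound on $\nrm{1/\bfrho}_{L^\infty}$ and $\nrm{1/\bfc}_{L^\infty}$).

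After the a priori estimate, the structure of the proof is standard: (1) maximum-principle / transport bounds showing $\ubr r_0/2 \le \bfrho(t)$, $\ubr c_0/2 \le \bfc(t)$ and $\nrm{\bfrho}_{L^\infty}, \nrm{\bfc}_{L^\infty}$ stay bounded on a short interval; (2) pressure estimate $\nrm{\nb\bfp}_{H^m}\lesssim \nrm{\nb\bfu}_{L^\infty}\nrm{\bfu}_{H^m} + \nrm{\nb\phi}_{H^m}\nrm{\bfrho}_{H^m}$ from elliptic regularity for $\lap\bfp = -\rd_i\bfu^j\rd_j\bfu^i + \nb\cdot(\bfrho\nb\phi)$, needed to close the $\rd^m\bfu$ estimate; (3) equivalence of $\int w(\bfc)\abs{\bfP}^2 + \bfrho\abs{\bfQ}^2 + \abs{\rd^m\bfu}^2$ with $\nrm{(\bfrho,\nb\bfc,\bfu)}_{\dot H^m}^2$ given the two-sided bounds on $\bfrho,\bfc$ — this is where the hypotheses $\chi, k \in H^{m+1}(\bbR_+)$ and $w(z)\gtrsim z^\gmm$ enter, to bound $\nb\bfc/k(\bfc)$ and its derivatives; (4) summing over $|m|\le m$ and adding the $L^2$-level estimate to obtain $\frac{d}{dt}E \lesssim P(E)$ for a polynomial $P$, hence short-time boundedness; (5) construction of solutions via an iteration scheme solving the \emph{linear} system (transport equations for $\bfrho^{(n+1)}$ along $\bfu^{(n)}+\chi(\bfc^{(n)})\nb\bfc^{(n)}$, for $\nb\bfc^{(n+1)}$, and the linear Euler-type equation for $\bfu^{(n+1)}$), proving a uniform-in-$n$ bound by the same energy argument, then compactness (Aubin--Lions) plus an $L^2$ difference estimate for convergence; (6) uniqueness via an $L^2$ estimate for the difference of two solutions, which only costs one derivative and so closes in the stated regularity class; and (7) continuity in time (as opposed to $L^\infty_t$) by the usual Bona--Smith-type argument or by noting strong convergence of the approximants.

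The main obstacle, and the heart of the matter, is verifying the top-order cancellation cleanly when $\chi$ and $k$ are genuinely nonlinear functions of $\bfc$ rather than the model case $\chi\equiv 1$, $k(z)=z$. One must track how $\rd^m$ hitting the product $\chi(\bfc)\bfrho\nb\bfc$ distributes: the genuinely top-order piece is $\chi(\bfc)\bfrho\,\rd^m\nb\bfc$, but there are also terms like $\chi'(\bfc)\bfrho\nb\bfc\,\rd^m\bfc$ and $\chi(\bfc)\rd^m\bfrho\,\nb\bfc$ which are of order $m$ in $\bfrho$ or $\bfc$ and must be seen to be \emph{not} worse than the energy (they pair against $\rd^m\bfrho$ or $\rd^m\nb\bfc$ and are handled by integration by parts moving one derivative, or are simply controlled in $L^2$ by $\nrm{\cdot}_{H^m}\nrm{\cdot}_{W^{1,\infty}}$). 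Getting the weights exactly right so that $\frac{d}{dt}\bigl(w(\bfc)\abs{\bfP}^2\bigr)$, $\frac{d}{dt}\bigl(\bfrho\abs{\bfQ}^2\bigr)$, and the $\nb\phi$-coupling into $\rd^m\bfu$ conspire to cancel requires choosing $\bfP$ and $\bfQ$ with precisely the coefficients $\nb\bfc/k(\bfc)$ and $\nb\bfc/(\bfrho k(\bfc))\nb\bfc$ displayed in \eqref{eq:key}, and then checking that the leftover terms involving $\rd_t$ and $\bfu\cdot\nb$ of these coefficients are lower order — here one uses that $\rd_t\bfc$ and $\rd_t\bfrho$ are themselves only first order in space (from the equations), so no derivative count is violated. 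Everything else is routine commutator bookkeeping, so I would present the nonlinear top-order cancellation in full detail and relegate the commutator and coefficient estimates to Moser-type lemmas.
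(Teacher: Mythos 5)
Your a priori estimate is, in substance, the paper's: the same good variables $R_m=\rd^m\bfrho+\frac{\nb\bfc}{k(\bfc)}\cdot\rd^m\bfu$, $F_m=\rd^m\nb\bfc-\frac{\nb\bfc}{\bfrho k(\bfc)}\nb\bfc\cdot\rd^m\bfu$, the weights $k(\bfc)/\chi(\bfc)$ and $\bfrho$, the pressure estimate, and the transport bounds on $\nrm{1/\bfrho}_{L^\infty},\nrm{1/\bfc}_{L^\infty}$ (two small slips: the dangerous terms in the $\rd^m\nb\bfc$ equation are $-k(\bfc)\nb\rd^m\bfrho$ and $-(\nb\rd^m\bfu)^T\nb\bfc$, not $-k(\bfc)\rd^m\bfrho$; and $\rd^m\bfrho\,\nb\phi$ in the $\bfu$-equation is harmless, already controlled by $\nrm{\bfrho}_{H^m}$). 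The genuine gap is your existence step (5). The lagged linear iteration you describe --- transport of $\bfrho^{(n+1)}$ along $\bfu^{(n)}+\chi(\bfc^{(n)})\nb\bfc^{(n)}$ with forcing built from $\bfc^{(n)}$, and a separate equation for $\nb\bfc^{(n+1)}$ --- does not close in the fully inviscid case: the zeroth-order coefficient $\nb\cdot(\chi(\bfc^{(n)})\nb\bfc^{(n)})$ forces an $H^m$ bound on $\bfrho^{(n+1)}$ to consume $\nrm{\bfc^{(n)}}_{H^{m+2}}$, one derivative more than the scheme propagates; equivalently, once the $(\bfrho,\bfc)$ coupling is lagged, the two top-order terms that are supposed to cancel sit in different iterates, so the modified-energy cancellation is unavailable and no uniform-in-$n$ bound exists. (This is exactly the scheme the paper uses for Theorem \ref{thm:c-viscous}, where $D_c>0$ and the heat equation supplies the missing derivative; without diffusion it fails.) You must either solve at each step the \emph{coupled} linear system for $(\bfrho^{(n+1)},\nb\bfc^{(n+1)})$ keeping both dangerous terms in the new iterate --- a weighted symmetrizable hyperbolic system whose solvability then needs its own argument --- or do what the paper does: add $\eps\lap$ to all equations, invoke the known local well-posedness of the viscous system (\cite{CKL1,CKL2}), prove the modified-energy and infimum estimates uniformly in $\eps$ (the infimum bound requires an extra maximum-principle computation to absorb the viscous term in the equation for $1/\bfrho^{(\eps)}$), and pass to the limit.

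A second, smaller issue is uniqueness and the convergence of approximants: your assertion that the $L^2$ difference estimate ``only costs one derivative and so closes'' is not right as stated, because one derivative of the \emph{difference} is precisely what an $L^2$ estimate cannot pay. The terms $-\bfrho_2\,\nb\cdot\tilde{\bff}$ in the $\tilde{\bfrho}$-equation, $-\bfc_2\nb\tilde{\bfrho}$ and $-(\nb\tilde{\bfu})^T\bff_2$ in the $\tilde{\bff}$-equation are linear in the difference with a full derivative on it; a plain $L^2$ estimate leaves an uncontrolled $\nb\cdot\tilde{\bff}$ even after integrating by parts. The paper closes uniqueness by redoing the good-variable trick at order zero: it estimates $\tilde{\bfrho}+\frac{\bff_2}{\bfc_2}\cdot\tilde{\bfu}$ and $\tilde{\bff}-\frac{\bff_1}{\bfrho_2\bfc_2}\bff_2\cdot\tilde{\bfu}$ with weights $\sqrt{\bfc_2}$ and $\sqrt{\bfrho_2}$, so that the two top-order terms combine into a total divergence and the $\nb\tilde{\bfu}$ contribution is absorbed by the correction. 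You should state this explicitly; with that amendment (and one of the existence fixes above) your outline reproduces the theorem.
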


\begin{remark}
	Taking $\bfu_0 = 0$ and $\phi = 0$, the above result gives in particular that the system \eqref{eq:KS} is locally-well posed in the fully inviscid case $(D_\rho = D_c = 0)$ for initial data $(\bfrho_0, \bfc_0)$ which is away from 0. Moreover, it is not difficult to show that the a priori estimates we obtain for the fully inviscid case carries over to partially viscous cases, namely when some of the viscosity constants $D_u, D_\rho, D_c$ are positive. 
\end{remark}

\begin{remark}
	A slightly technical issue appears in the case of an unbounded domain, since then the functions $\bfrho,\bfc$ being bounded away from 0 forces in particular that they do not belong to any $L^p$ for finite $p$. It is then natural to require that there exist time-dependent constants $\bfrho_\infty(t)$ and $\bfc_\infty(t)$ such that \begin{equation*}
	\begin{split}
	\bfrho(t,\cdot) - \bfrho_{\infty}(t) \in H^{m}, \quad \bfc(t,\cdot) - \bfc_{\infty}(t) \in H^{m+1}. 
	\end{split}
	\end{equation*} Then one can easily guarantee (formally) that the solution must be uniformly bounded in space if the initial data is. Indeed, it is easy to see (by evaluating at infinity) directly from \eqref{eq:KSF} that \begin{equation*}
	\begin{split}
	\frac{d}{dt} \bfrho_\infty = 0, \quad \frac{d}{dt} \bfc_\infty = - k(\bfc_\infty) \bfrho_\infty
	\end{split}
	\end{equation*} which determines $(\bfrho_\infty(t),\bfc_\infty(t))$ in terms of $(\bfrho_\infty(0), \bfc_\infty(0))$. Hence one may simply work with decaying functions $\bfrho(t,\cdot) - \bfrho_{\infty}(t)$ and $ \bfc(t,\cdot) - \bfc_{\infty}(t)$. On the other hand, we can simply require $\bfu(t,\cdot) \in {L}^\infty$, since the contribution from $\bfrho_\infty$ into $\rd_t\bfu$ can be absorbed into the pressure term. A different way to handle this non-decaying issue is to altogether avoid putting $\bfrho, \bfc$ in $L^2$ and simply use $L^\infty$ instead. In the following, we shall neglect this issue and just work in $\bbT^d$.
\end{remark}

\subsubsection{Discussion}

The non-vanishing assumption on $\bfrho$ and $\bfc$ stems naturally from our method of proof, which utilizes the ``modified variables'' $\sqrt{\bfc}\rd^m\bfrho$ and $\sqrt{\bfrho} \rd^{m+1}\bfc$. Although we expect that such an assumption cannot be entirely omitted, we present two different results in which non-vanishing assumption is relaxed. The first one is simply employing the modified variables $(\sqrt{\bfrho})^{-1} \rd^m\bfrho$ and $(\sqrt{\bfc})^{-1} \rd^{m+1}\bfc$. A disadvantage in this approach is that, to obtain very regular solutions (i.e. well-posedness for $m$ large), one needs to assume that whenever $\bfc_0$ or $\bfrho_0$ vanishes, it must do so with a high order (proportional to $m$). The other approach is to \textit{specify} the profiles of $\bfrho_0$ and $\bfc_0$ near the points of vanishing; e.g. $\bfrho_0(x) \simeq A|x-x_0|^2$. There is nothing special about the data being quadratic at its zeroes, and the same method can be applied to smooth data which vanishes with higher order. While the assumption on the initial data is more rigid, propagating $C^\infty$-smoothness is not difficult in this case. Moreover, in this latter setting, we can prove finite-time singularity formation for \eqref{eq:KS}; the $C^2$-norm of $\bfc(t,\cdot)$ becomes infinite in finite time. 

\subsubsection{Linear analysis}

In the fully inviscid case $D_u = D_\rho = D_c = 0$, there are serious difficulties in closing $H^m$ a priori estimates. To see whether there is a chance of the inviscid system to be well-posed, we first consider \eqref{eq:KS} in the 1D case: 
\begin{equation}\label{eq:KS-1D}
\left\{
\begin{aligned}
\rd_t \bfrho & =  -\rd_x ( \bfrho \rd_x \bfc), \\
\rd_t \bfc & =  - \bfc\bfrho  .
\end{aligned}
\right.
\end{equation} Note that we have taken $k(z) = z, \chi(z) = 1$ for simplicity. The term $-\rd_x(\bfrho\rd_x\bfc)$ on the right hand side for the $\bfrho$-equation seems like it incurs loss of derivatives. To see more clearly the effect of this term, we take the linearization approach: while there are no non-trivial steady states (solutions independent of time), the next simplest solutions are given by $(\bar{\bfrho}, \bar{\bfc} e^{-\bar{\bfrho}t})$ where $\bar{\bfrho},\bar{\bfc}$ are some positive constants. Fixing $\bar{\bfrho} = \bar{\bfc} = 1$, writing \begin{equation*}
\begin{split}
\bfrho =1+ \tilde{\bfrho}, \quad \bfc = e^{-t} + \tilde{\bfc} ,
\end{split}
\end{equation*} and dropping quadratic terms in the perturbation, we arrive at  \begin{equation}\label{eq:KS1-lin}
\left\{
\begin{aligned}
\rd_t \tilde{\bfrho} & = -\rd_{xx}\tilde{\bfc} ,\\
\rd_t \tilde{\bfc} & = - e^{-t} \tilde{\bfrho} - \tilde{\bfc}.  
\end{aligned}
\right.
\end{equation} This linear system is well-posed: we have that the ``energy'' \begin{equation*}
\begin{split}
\nrm{\tilde{\bfrho}}_{L^2}^2 + e^t\nrm{\rd_x\tilde{\bfc}}_{L^2}^2
\end{split}
\end{equation*} is under control: to see this, we compute that \begin{equation*}
\begin{split}
\frac{1}{2}\frac{d}{dt} \left( \nrm{\tilde{\bfrho}}_{L^2}^2 + e^t \nrm{\rd_x\tilde{\bfc}}_{L^2}^2 \right) = - \frac{1}{2}e^t \nrm{\rd_x\tilde{\bfc}}_{L^2}^2 \le 0 . 
\end{split}
\end{equation*} Proceeding similarly for higher derivatives, we see that $\nrm{\rd_x^n\tilde{\bfrho}}_{L^2}^2 + e^t\nrm{\rd_x^{n+1,\eps}\tilde{\bfc}}_{L^2}^2$ decreases in time for any $n \ge 0$. This suggests that a suitable weighted norm of the solution could be under control  for the nonlinear evolution. It is also expected that the weight should be solution-dependent,  which naturally gives rise to the non-vanishing assumptions. 

Moving on to the case of \eqref{eq:KSF}, we consider the following 1D model system: \begin{equation}  \label{eq:KSF-inviscid-1D}
\left\{
\begin{aligned}
 &\rd_t\bfrho + \bfu\rd_x\bfrho = -\rd_x(\bfrho\rd_x\bfc) ,\\
 &\rd_t\bfc + \bfu\rd_x\bfc = -\bfc\bfrho,\\
 &\rd_t\bfu + \bfu\rd_x\bfu = \bfrho .
\end{aligned}
\right.
\end{equation} From the above, we know that it is natural to rewrite the equation in terms of $\bff = \rd_x\bfc$. Taking a derivative in the $\bfc$-equation and writing $D_t = \rd_t + \bfu\rd_x$ for simplicity, we obtain: \begin{equation*}
\left\{ 
\begin{aligned}
&D_t \bfrho = -\bfrho \rd_x \bff - \bff \rd_x\bfrho ,\\
&D_t\bff = - \bff \rd_x\bfu - \bff \bfrho - \bfc \rd_x\bfrho ,\\
&D_t\bfu = \bfrho. 
\end{aligned}
\right.
\end{equation*} A formal linearization around the state $\bfrho=\bff=\bfu = 1$ gives, after removing terms which do not lose derivatives,  \begin{equation}  \label{eq:KSF-inviscid-1D-linear}
\left\{
\begin{aligned}
{(\rd_t+\rd_x)}\bfrho & = - \rd_{x}\bff - \rd_x\bfrho  , \\
{(\rd_t+\rd_x)} \bff & = - \rd_x\bfu - \rd_x\bfrho , \\
{(\rd_t+\rd_x)} \bfu & = 0 . 
\end{aligned}
\right.
\end{equation} Then it is tempting to rewrite the above as \begin{equation}  \label{eq:illposed-1D-good}
\left\{
\begin{aligned}
{(\rd_t+\rd_x)}(\bfrho + \bfu ) & = - \rd_{x}(\bff - \bfu ) - \rd_x(\bfrho + \bfu ) , \\
{(\rd_t+\rd_x)} (\bff  - \bfu )& = - \rd_x(\bfrho + \bfu ), \\
{(\rd_t+\rd_x)} \bfu & = 0 . 
\end{aligned}
\right.
\end{equation} In this form, it is clear that \eqref{eq:KSF-inviscid-1D-linear} is well-posed in Sobolev spaces. This suggests that even the non-linear system could be well-posed for some delicate reason. Of course, in higher dimensions, one needs to take into account the effect of the pressure as well. With this in mind, we shall now give a proof of Theorem \ref{thm:lwp-away}. 

\begin{proof}[{\bf Proof of Theorem \ref{thm:lwp-away}}]
	
	For now, we take $\chi(z) = 1$, $k(z) = z$ and $\Omg = \bbT^d$ for simplicity and rewrite the inviscid \eqref{eq:KSF} system with $\bff = \nabla \bfc$: \begin{equation} \label{eq:KSF-inviscid}
	\left\{
	\begin{aligned}
	&\rd_t \boldsymbol{\rho}  + \bfu\cdot\nabla \bfrho = -\nabla \cdot (\bfrho\bff) , \\
	&\rd_t \bff + \bfu\cdot\nabla \bff = -[ \nb\bfu  ]^T \bff - \bfrho \bff - \bfc\nabla\bfrho , \\
	&\rd_t \bfu + \bfu\cdot\nabla \bfu + \nabla \bfp =  \bfrho\nabla \phi , \\
	& \mathrm{div}\, \bfu = 0. 
	\end{aligned}
	\right.
	\end{equation} Let us divide the proof into several steps. 
	
	\medskip
	
	\noindent \textit{(i) Regularity of the pressure}: Taking the divergence of the equation for $\bfu$, we obtain that \begin{equation*}
	\begin{split}
	\lap \bfp = -\sum_{1 \le i,j \le d} \rd_i \bfu^j \rd_j \bfu^i + \nabla\cdot(\bfrho \nb \phi). 
	\end{split}
	\end{equation*} We then have that \begin{equation*}
	\begin{split}
	\nabla \bfp = \nabla(-\lap)^{-1} \left(\sum_{1 \le i,j \le d} \rd_i \bfu^j \rd_j \bfu^i\right) - \nabla(-\lap)^{-1} \nb\cdot(\bfrho\nb\phi) . 
	\end{split}
	\end{equation*} We claim that for $m \ge \frac{d}{2} + 1$, \begin{equation*}
	\begin{split}
	\nrm{\nb \bfp}_{H^m} \lesssim \nrm{\bfu}_{H^m}^2 + \nrm{\nb\phi}_{H^m} \nrm{\bfrho}_{H^m}. 
	\end{split}
	\end{equation*} It is straightforward to bound the second term: since $\nb(-\lap)^{-1}\nb\cdot$ is an operator with bounded Fourier multiplier, \begin{equation*}
	\begin{split}
	\nrm{\nabla(-\lap)^{-1} \nb\cdot(\bfrho\nb\phi) }_{H^m} \lesssim \nrm{\bfrho\nb\phi}_{H^m} \lesssim \nrm{\nb\phi}_{H^m} \nrm{\bfrho}_{H^m}.
	\end{split}
	\end{equation*} On the other hand, for any $n \ge 1$, \begin{equation*}
	\begin{split}
	\nrm{ \nb (-\lap)^{-1} (\rd_i\bfu^j \rd_j\bfu^i) }_{\dot{H}^n}  \lesssim \nrm{\rd_i\bfu^j \rd_j \bfu^i}_{\dot{H}^{n-1}} \lesssim \nrm{\nabla\bfu}_{L^\infty} \nrm{\bfu}_{H^n}.
	\end{split}
	\end{equation*} Lastly, using incompressibility \begin{equation*}
	\begin{split}
	\nrm{ \nb (-\lap)^{-1}\sum_i (\rd_i\bfu^j \rd_j\bfu^i) }_{L^2} = \nrm{ \nb (-\lap)^{-1}\rd_i \sum_i (\bfu^j \rd_j\bfu^i) }_{L^2} \lesssim \nrm{\nb\bfu}_{L^\infty} \nrm{\bfu}_{L^2}. 
	\end{split}
	\end{equation*} The claim follows.

	\medskip
	
	\noindent \textit{(ii) Estimate of the infimum}: Assuming for a moment that a sufficiently regular solution exists, we now obtain a simple estimate on the variation of the infimum. For this, let $x^*(t)$ be any time-dependent continuous curve of infimum point of $\bfrho(t)$. %(In the case of an unbounded domain, the infimum could occur at infinity but this simply means $\bfrho(t,x) \ge \bfrho_\infty(t) = \bfrho_\infty(0)$.) 
	Dividing both sides of the $\bfrho$-equation by $\bfrho^2$, \begin{equation*}
	\begin{split}
	\rd_t (\frac{1}{\bfrho})  + (\bfu + \bff) \cdot\nabla (\frac{1}{\bfrho})  =  (\nb \cdot \bff) (\frac{1}{\bfrho}) . 
	\end{split}
	\end{equation*} Evaluating along the characteristics defined by $\bfu+\bff$, we obtain the estimate \begin{equation}\label{eq:rho-inf}
	\begin{split}
	\left| \frac{d}{dt} \nrm{\bfrho}_{inf}^{-1}  \right| \le \nrm{\lap\bfc}_{L^\infty}  \nrm{\bfrho}_{inf}^{-1}   . 
	\end{split}
	\end{equation} Similarly, from the equation for $\bfc$, one can show that \begin{equation}\label{eq:c-inf}
	\begin{split}
	\left| \frac{d}{dt}\nrm{\bfc}_{inf}^{-1}   \right| \le \nrm{\bfrho}_{L^\infty}\nrm{\bfc}_{inf}^{-1}   . 
	\end{split}
	\end{equation} 
	
	\medskip
	
	\noindent \textit{(iii) Good variables}: Still proceeding under the assumption of the existence of a sufficiently smooth solution, we introduce the good variables and perform a priori estimates. For each $m\ge 0$, we fix an $m$-th order partial derivative, and define the ``good'' variables as follows: \begin{equation*}
	\begin{split}
	R_m &:= \rd^m \bfrho + \frac{1}{\bfc} \bff \cdot \rd^m \bfu ,\\
	F_m &:= \rd^m \bff - \frac{1}{\bfrho\bfc}  \bff (\bff \cdot \rd^m \bfu). 
	\end{split}
	\end{equation*} We introduce notation $(\calO_m)^k$ to denote expressions that can be bounded in $L^2$ by a constant multiple of $(1+\nrm{\bfu}_{H^m} + \nrm{\bfrho}_{H^m} + \nrm{\bfc}_{H^{m+1}} + \nrm{\bfrho}_{inf}^{-1} + \nrm{\bfc}_{inf}^{-1} )^k$. From the equation for $\bfu$ (using the previous bound for $\nb\bfp$), we have \begin{equation*}
	\begin{split}
	\rd_t \rd^m\bfu + \bfu \cdot\nb \rd^m \bfu = (\calO_m)^2.  
	\end{split}
	\end{equation*} Similarly, we may write \begin{equation*}
	\begin{split}
	\rd_t \rd^m\bfrho + \bfu \cdot \nb \rd^m\bfrho = - \bfrho \nb \cdot \rd^m\bff - \bff \cdot \nabla \rd^m \bfrho + (\calO_m)^2 
	\end{split}
	\end{equation*} and \begin{equation*}
	\begin{split}
	\rd_t \rd^m \bff + \bfu \cdot \nabla \rd^m\bff = - \nb (\bff \cdot \rd^m\bfu) - \bfc \nb \rd^m\bfrho + (\calO_m)^2. 
	\end{split}
	\end{equation*} We first massage the equation for $\rd_t\rd^m\bfrho$: \begin{equation*}
	\begin{split}
	\rd_t( \rd^m\bfrho + \frac{1}{\bfc} \bff \cdot \rd^m\bfu) +  (\bfu\cdot\nb)( \rd^m\bfrho + \frac{1}{\bfc} \bff \cdot \rd^m\bfu) &= \rd_t(\frac{1}{\bfc}\bff) \cdot \rd^m\bfu - \bfrho\nb\cdot\rd^m\bff + \frac{1}{\bfc}\bff \cdot( (\bff\cdot\nb) \rd^m\bfu) \\
	&\quad - (\bff\cdot\nb) ( \rd^m\bfrho + \frac{1}{\bfc} \bff \cdot \rd^m\bfu) +  {(\calO_m)^6}. 
	\end{split}
	\end{equation*} 
	Note that we can rewrite the second and third terms on the right hand side as (repeated indices being summed) \begin{equation*}
	\begin{split}
	- \bfrho\nb\cdot\rd^m\bff + \frac{1}{\bfc}\bff \cdot( (\bff\cdot\nb) \rd^m\bfu)  & = -\bfrho \left( \rd^m\rd_j\bff^j  - \frac{1}{\bfc\bfrho} \bff^\ell \bff^j \rd_j \rd^m \bfu^\ell   \right) \\
	& = -\bfrho \rd_j \left( \rd^m\bff^j  - \frac{1}{\bfc\bfrho} \bff^j  \bff\cdot  \rd^m \bfu   \right)  - \bfrho \rd_j(\frac{1}{\bfc\bfrho}) \bff^j \bff\cdot\rd^m\bfu + (\calO_m)^4.
	\end{split}
	\end{equation*} That is, we have \begin{equation}\label{eq:R_m}
	\begin{split}
	D_t R_m = -(\bff\cdot\nb) R_m - \bfrho \nabla \cdot F_m +   (\calO_m)^6.
	\end{split}
	\end{equation}  On the other hand, we have \begin{equation}\label{eq:F_m}
	\begin{split}
	D_t F_m = -\bfc \nb R_m +  (\calO_m)^6.
	\end{split}
	\end{equation} Multiplying both sides of \eqref{eq:R_m} by $\bfc R_m$ and integrating, we obtain that \begin{equation*}
	\begin{split}
	\left| \int \bfc\bfrho R_m\nabla \cdot F_m
	+ {\frac12} \frac{d}{dt} \nrm{\sqrt{\bfc}R_m}_{L^2}^{2} \right| &\lesssim \nrm{\nb(\bfc\bff)}_{L^\infty} \nrm{R_m}_{L^2}^2 +  \nrm{\bfc}_{L^\infty}\nrm{R_m}_{L^2}(\calO_m)^6 \\
	&\lesssim \nrm{R_m}_{L^2}^2 (\calO_m)^2 + \nrm{R_m}_{L^2} (\calO_m)^7 \\
	&\lesssim \nrm{R_m}_{L^2}^2 (\calO_m)^2 + (\calO_m)^{12}. 
	\end{split}
	\end{equation*} {Note that now we are abusing notation to write $(\calO_m)^k$ for quantities bounded by a constant multiple of $(1+\nrm{\bfu}_{H^m} + \nrm{\bfrho}_{H^m} + \nrm{\bfc}_{H^{m+1}} + \nrm{\bfrho}_{inf}^{-1} + \nrm{\bfc}_{inf}^{-1} )^k$.} Similarly, multiplying both sides of \eqref{eq:F_m} by $\bfrho F_m$ and integrating gives \begin{equation*}
	\begin{split}
	\left| \int \bfrho\bfc F_m \nb R_m + {\frac12}  \frac{d}{dt} \nrm{\sqrt{\bfrho} F_m}_{L^2}^2  \right| \lesssim \nrm{F_m}_{L^2}^2 (\calO_m)^2 +  (\calO_m)^{12}. 
	\end{split}
	\end{equation*} Combining the above, we have \begin{equation*}
	\begin{split}
	\frac{d}{dt}\left( \nrm{\sqrt{\bfc}R_m}_{L^2}^{2} +  \nrm{\sqrt{\bfrho} F_m}_{L^2}^2 \right) \lesssim  \left( \nrm{\sqrt{\bfc}R_m}_{L^2}^{2} +  \nrm{\sqrt{\bfrho} F_m}_{L^2}^2 \right) (\calO_m)^3 + (\calO_m)^{12}.  
	\end{split}
	\end{equation*} Now, from the equation for $\rd_t \rd^m\bfu$, we obtain \begin{equation*}
	\begin{split}
	\frac{d}{dt} \nrm{\rd^m\bfu}_{L^2}^2 \lesssim (\calO_m)^3. 
	\end{split}
	\end{equation*} For a stronger reason, we can derive \begin{equation*}
	\begin{split}
	\frac{d}{dt}\sum_{k=0}^m\left( \nrm{\sqrt{\bfc}R_k}_{L^2}^{2} +  \nrm{\sqrt{\bfrho} F_k}_{L^2}^2  + \nrm{\rd^k\bfu}_{L^2}^2  \right) \lesssim \left( \nrm{\sqrt{\bfc}R_m}_{L^2}^{2} +  \nrm{\sqrt{\bfrho} F_m}_{L^2}^2 \right) (\calO_m)^3 + (\calO_m)^{12}.  
	\end{split}
	\end{equation*}
	Next, it is not difficult to see that by defining \begin{equation*}
	\begin{split}
	Z_m := \sum_{k=0}^m\left( \nrm{\sqrt{\bfc}R_k}_{L^2}^{2} +  \nrm{\sqrt{\bfrho} F_k}_{L^2}^2  + \nrm{\rd^k\bfu}_{L^2}^2  \right) +  \nrm{\bfrho}_{inf}^{-1} + \nrm{\bfc}_{inf}^{-1}  + \nrm{\bff}_{L^\infty} , 
	\end{split}
	\end{equation*} we have \begin{equation*}
	\begin{split}
	\frac{d}{dt} Z_m \lesssim  (Z_m)^2 (1 + (Z_m)^2) (\calO_m)^8 + (\calO_m)^{12}. 
	\end{split}
	\end{equation*} We are in a position to close the estimates in terms of $Z_m$: recalling the definitions of $R_m$ and $F_m$, we have  \begin{equation*}
	\begin{split}
	\nrm{\bfrho}_{H^m} + \nrm{\bff}_{H^m} \lesssim \sum_{k=0}^m( \nrm{R_k}_{L^2} + \nrm{F_k}_{L^2}  ) + (1+ \nrm{\bfrho}_{inf}^{-1} + \nrm{\bfc}_{inf}^{-1})^2 \nrm{\bff}_{L^\infty}^2 \nrm{\bfu}_{H^m} \lesssim (1 + Z_m)^5 
	\end{split}
	\end{equation*} which gives $\calO_m \lesssim (1 + Z_m)^5$. Therefore, \begin{equation}\label{eq:apriori-inviscid}
	\begin{split}
	\frac{d}{dt} Z_m \lesssim ( 1 + Z_m)^{60}. 
	\end{split}
	\end{equation} Therefore, for $Z_m(0)<\infty$, there exists $T>0$ such that $Z_m(t) \le 2Z_m(0)$ for $t<T$. 
	
	\medskip
	
	\noindent \textit{(iv) Existence}: Existence of a solution can be shown using viscous approximations; for fixed $\epsilon>0$, we consider the viscous system \begin{equation} \label{eq:KSF-viscous}
	\left\{
	\begin{aligned}
	&\rd_t \bfrho^{(\epsilon)}  + \bfu^{(\epsilon)}\cdot\nabla \bfrho^{(\epsilon)} = -\nabla \cdot (\bfrho^{(\epsilon)}\bff^{(\epsilon)}) + \eps \lap \bfrho^{(\epsilon)} , \\
	&\rd_t \bff^{(\epsilon)} + \bfu^{(\epsilon)}\cdot\nabla \bff^{(\epsilon)} = -[ \nb\bfu^{(\epsilon)}  ]^T \bff^{(\epsilon)} - \bfrho^{(\epsilon)} \bff^{(\epsilon)} - \bfc\nabla\bfrho^{(\epsilon)} + \eps \lap \bff^{(\epsilon)} , \\
	&\rd_t \bfu^{(\epsilon)} + \bfu^{(\epsilon)}\cdot\nabla \bfu^{(\epsilon)} + \nabla \bfp^{(\epsilon)} =  \bfrho^{(\epsilon)}\nabla \phi + \eps\lap\bfu^{(\epsilon)} , \\
	& \mathrm{div}\, \bfu^{(\epsilon)} = 0 
	\end{aligned}
	\right.
	\end{equation}  with the same initial data $(\bfrho_0,\bff_0,\bfu_0)$. Existence of a smooth local in time solution to \eqref{eq:KSF-viscous} was established already in \cite{CKL1,CKL2}. Moreover, it is not difficult to show by direct computation that the solution satisfies the a priori estimate \eqref{eq:apriori-inviscid} uniformly in $\epsilon>0$. {Since this step is not entirely obvious, we repeat the infimum estimate for the viscous solutions in the case of $\bfrho^{(\epsilon)}$. (The case of $\bfc^{(\epsilon)}$ is similar.) Since $\bfrho^{(\epsilon)}$ is continuous in time, at least for a very small time interval, we have a uniform lower bound $\bfrho^{(\epsilon)}(t,x) \ge c_0>0$ for some constant $c_{0}$ from the positivity of the initial data. Therefore, with the identity \begin{equation*}
		\begin{split}
			\lap \frac{1}{\bfrho^{(\epsilon)}} = - \frac{\lap \bfrho^{(\epsilon)}}{(\bfrho^{(\epsilon)})^2} + 2 \frac{\nb \bfrho^{(\epsilon)}}{\bfrho^{(\epsilon)}} \cdot \nb \frac{1}{\bfrho^{(\epsilon)}}, 
		\end{split}
	\end{equation*}we can compute on this time interval \begin{equation*}
		\begin{split}
			\rd_t \frac{1}{\bfrho^{(\epsilon)}} + (\bfu^{(\epsilon)} + \bff^{(\epsilon)} +  2\eps \frac{\nb \bfrho^{(\epsilon)}}{\bfrho^{(\epsilon)}} ) \cdot\nabla (\frac{1}{\bfrho^{(\epsilon)}})  =  (\nb \cdot \bff^{(\epsilon)}) (\frac{1}{\bfrho^{(\epsilon)}}) + \epsilon \lap \frac{1}{\bfrho^{(\epsilon)}} .
		\end{split}
	\end{equation*} Therefore, we obtain the estimate \begin{equation}\label{eq:rho-inf-viscous}
	\begin{split}
		\left| \frac{d}{dt} \nrm{\bfrho^{(\epsilon)}}_{inf}^{-1}  \right| \le \nrm{\lap\bfc^{(\epsilon)}}_{L^\infty}  \nrm{\bfrho^{(\epsilon)}}_{inf}^{-1} 
	\end{split}
\end{equation}
as in the proof of maximum principle for advection-diffusion equations. With a uniform--in--$\epsilon$ bound on $\nrm{\lap\bfc^{(\epsilon)}}_{L^\infty} $, this inequality guarantees that the quantity $\nrm{\bfrho^{(\epsilon)}}_{inf}^{-1} $ is actually uniformly bounded in some common time interval.

	Next, the uniform bound on the quantity $Z_m$ guarantees that for all $\eps>0$, the solution $(\bfrho^{(\epsilon)},\bff^{(\epsilon)},\bfc^{(\epsilon)})$ can be extended at least up to time interval $[0,T]$ for some $T>0$. By passing to a weakly convergent subsequence as $\epsilon>0$, one obtains a triple $(\bfrho,\bff,\bfc)$ with finite $Z_m$ on $[0,T]$. It is not difficult to show that this triple is a solution to \eqref{eq:KSF-inviscid}, with prescribed initial data. }
	
	\medskip
	
	\noindent \textit{(v) Uniqueness}: To prove uniqueness, we assume existence of two solutions $(\bfrho_i, \bff_i,\bfu_i,\bfp_i)$ ($i = 1, 2$) to \eqref{eq:KSF-inviscid} satisfying the properties stated in the theorem with the same initial data. We define \begin{equation*}
	\begin{split}
	\trho = \bfrho_1 - \bfrho_2, \quad \tc = \bfc_1-\bfc_2,\quad  \tf = \bff_1 - \bff_2, \quad \tu = \bfu_1 - \bfu_2, \quad \tilde{\bfp} = \bfp_1-\bfp_2.
	\end{split}
	\end{equation*} Note that $\trho$ and $\tu$ respectively satisfies \begin{equation*}
	\begin{split}
	\rd_t\trho + (\bfu_1+\bff_1) \cdot\nb\trho + (\tu+\tf)\cdot\nb\bfrho_2 = - \bfrho_2 \nb\cdot\tf - \trho \nb\cdot \bff_1 ,
	\end{split}
	\end{equation*} and \begin{equation*}
	\begin{split}
	\rd_t\tu +  \bfu_1 \cdot\nb\tu +  \tu \cdot\nb\bfu_2 + \nb\tilde{\bfp}=  \trho \nb\phi . 
	\end{split}
	\end{equation*} From the equations for $\trho$ and $\tu$, we obtain that \begin{equation}\label{eq:trho-modified}
	\begin{split}
	&\rd_t (\trho + \frac{\bff_2}{\bfc_2}\cdot \tu) + \bfu_1\cdot\nb (\trho + \frac{\bff_2}{\bfc_2}\cdot \tu) \\
	&\quad= -\bff_1\cdot\nb\trho - \bfrho_2\nb\cdot\tf - (\tu+\tf)\cdot\nb\bfrho_2 - \trho \nb\cdot \bff_1 - \tu \cdot\nb\bfu_2- \nb\tilde{\bfp} +  \trho \nb\phi \\
	&\quad\qquad + \rd_t( \frac{\bff_2}{\bfc_2} ) \cdot \tu + \bfu_1\cdot \nb ( \frac{\bff_2}{\bfc_2} ) \cdot \tu \\
	&\quad= -\bff_1\cdot \nb (\trho + \frac{\bff_2}{\bfc_2}\cdot \tu) - \bfrho_2 \nb\cdot (\tf - \frac{\bff_1}{\bfrho_2\bfc_2}\bff_2\cdot\tu)  - \bfrho_2 \nb\cdot (\frac{\bff_1}{\bfrho_2}) \frac{\bff_2}{\bfc_2}\cdot\tu  \\
	&\quad\qquad   - (\tu+\tf)\cdot\nb\bfrho_2 - \trho \nb\cdot \bff_1 - \tu \cdot\nb\bfu_2- \nb\tilde{\bfp} +  \trho \nb\phi + \rd_t( \frac{\bff_2}{\bfc_2} ) \cdot \tu + \bfu_1\cdot \nb ( \frac{\bff_2}{\bfc_2} ) \cdot \tu. 
	\end{split}
	\end{equation} On the other hand, $\tc$ and $\tf$ satisfy \begin{equation*}
	\begin{split}
	\rd_t\tc +  \bfu_1 \cdot\nb\tc +  \tu \cdot\nb\bfc_2 = -\bfrho_1\tc - \bfc_2 \trho ,
	\end{split}
	\end{equation*} and \begin{equation*}
	\begin{split}
	\rd_t\tf + \bfu_1 \cdot\nb\tf = -(\nb\bfu_1)^T \tf - \bfrho_1 \tf - \tc \nb \bfrho_1 - \bfc_2 \nb\trho - \trho\nb\bfc_2 - \tu\cdot\nb\bff_2 - (\nb\tu)^T \bff_2 .
	\end{split}
	\end{equation*} From the equation for $\tf$, we have \begin{equation}\label{eq:tf-modified}
	\begin{split}
	& \rd_t(\tf - \frac{\bff_1}{\bfrho_2\bfc_2}\bff_2\cdot\tu ) + \bfu_1 \cdot\nb(\tf - \frac{\bff_1}{\bfrho_2\bfc_2}\bff_2\cdot\tu ) \\
	& \quad = - \bfc_2  \nb (\bfrho + \frac{\bff_2}{\bfc_2}\tu) + \bfc_2 \nb\cdot (\frac{\bff_2}{\bfc_2}) \tu \\
	& \quad\qquad -\rd_t (  \frac{\bff_1}{\bfrho_2\bfc_2}\bff_2 ) \cdot\tu - \bfu_1\cdot\nb (  \frac{\bff_1}{\bfrho_2\bfc_2}\bff_2 ) \cdot \tu -(\nb\bfu_1)^T \tf - \bfrho_1 \tf - \tc \nb \bfrho_1 - \trho\nb\bfc_2 - \tu\cdot\nb\bff_2 . 
	\end{split}
	\end{equation} Now, from the equation for $\tu$, we easily obtain that \begin{equation*}
	\begin{split}
	\left|\frac{d}{dt} \nrm{\tu}_{L^2}^2\right| \lesssim \nrm{\tu}_{L^2}^2 +  \nrm{ \trho + \frac{\bff_2}{\bfc_2}\cdot \tu }_{L^2}^2. 
	\end{split}
	\end{equation*} {To derive this estimate, we have used the assumed regularity of the solutions  $(\bfrho_i, \bff_i,\bfu_i,\bfp_i)$ for $i = 1$ and $2$: $\tilde{\bfu} \in H^m$ ensures that $\rd_{t} \tilde{\bfu}$ is defined as a continuous function in space and therefore $\frac12\nrm{\tilde{\bfu}}_{L^2}^2$ is differentiable in time with time derivative equal to $\int \tilde{\bfu} \rd_t \tilde{\bfu}$.} Next, simultaneously using \eqref{eq:trho-modified} and \eqref{eq:tf-modified}, \begin{equation*}
	\begin{split}
\left|	\frac{d}{dt} \left(  \nrm{ \sqrt{\bfc_2} (\trho + \frac{\bff_2}{\bfc_2}\cdot \tu )}_{L^2}^2 +  \nrm{ \sqrt{\bfrho_2} (\tf - \frac{\bff_1}{\bfrho_2\bfc_2}\bff_2\cdot\tu )}_{L^2}^2 \right) \right|\lesssim \nrm{ \trho + \frac{\bff_2}{\bfc_2}\cdot \tu }_{L^2}^2 +  \nrm{\tf - \frac{\bff_1}{\bfrho_2\bfc_2}\bff_2\cdot\tu }_{L^2}^2 + \nrm{\tu}_{L^2}^2. 
	\end{split}
	\end{equation*} Hence, for \begin{equation*}
	\begin{split}
	\widetilde{X} :=\nrm{ \sqrt{\bfc_2} (\trho + \frac{\bff_2}{\bfc_2}\cdot \tu )}_{L^2}^2 +  \nrm{ \sqrt{\bfrho_2} (\tf - \frac{\bff_1}{\bfrho_2\bfc_2}\bff_2\cdot\tu )}_{L^2}^2 + \nrm{\tu}_{L^2}^2,
	\end{split}
	\end{equation*} we have \begin{equation*}
	\begin{split}
	\left|\frac{d}{dt} \widetilde{X} \right|\lesssim \widetilde{X}. 
	\end{split}
	\end{equation*} Since $\widetilde{X}(t=0) = 0$, we conclude that $\widetilde{X} = 0$ for $t>0$. This finishes the proof of uniqueness.

	\medskip
	
	\noindent \textit{(vi) Modification for general $\chi$ and $k$:} In the case of general $\chi$ and $k$, we simply define the good variables by \begin{equation*}
	\begin{split}
	R_m = \rd^m\bfrho + \frac{\nb\bfc}{k(\bfc)} \rd^m\bfu 
	\end{split}
	\end{equation*} and \begin{equation*}
	\begin{split}
	F_m = \rd^m \bff - \frac{\bff}{\bfrho k(\bfc)} \bff \cdot \rd^m\bfu .
	\end{split}
	\end{equation*} Moreover, instead of $\sqrt{\bfc}$, we use the weight  \begin{equation*}
	\begin{split}
	\left(  \frac{k(\bfc)}{\chi(\bfc)}\right)^{\frac{1}{2}}
	\end{split}
	\end{equation*} for $R_m$. The assumption \begin{equation*}
	\begin{split}
	  \frac{k(\bfc)}{\chi(\bfc)} \gtrsim \bfc^\gmm 
	\end{split}
	\end{equation*} allows us to bound \begin{equation*}
	\begin{split}
	\nrm{R_m}_{L^2} \lesssim \nrm{\bfc}_{inf}^{-\frac{\gmm}{2}} \nrm{ \left(  \frac{k(\bfc)}{\chi(\bfc)}\right)^{\frac{1}{2}} R_m }_{L^2}.
	\end{split}
	\end{equation*} We omit the details. The proof is now complete. 
\end{proof} 

{\begin{remark}[Extension to general sensitivity functions]\label{rem:rotation}
	In an interesting recent work \cite{WinklerIMRN}, Winkler considered the very general case where the chemotactic sensitivity function is given by a matrix $S$ depending on $x$, $\bfrho$, and $\bfc$. In the presence of diffusion, the author established eventual relaxation of Keller--Segel-fluid systems with general bounded and smooth  $S$. Here, let us briefly present an extension of Theorem \ref{thm:lwp-away} in the simplest case of $\chi(z) = 1$, $k(z) = z$, and $\bfu \equiv 0$: \begin{equation}\label{eq:rot}
		\left\{
		\begin{aligned}
		&\rd_t \boldsymbol{\rho} = -\nabla \cdot (\bfrho S \nabla  \bfc) , \\
		&\rd_t \bfc = -\bfc \bfrho . 
		\end{aligned}
		\right.
	\end{equation} Under the assumption that $\rho$ and $\bfc$ are bounded away from 0 at the initial time, we claim that the generalized system \eqref{eq:rot} is locally well-posed if the $n\times n$--matrix $S$ satisfies \begin{equation*}
	\begin{split}
		S + S^T \mbox{ is a diagonal matrix with strictly positive diagonal entries}. 
	\end{split}
\end{equation*} That is, $S = (s_{ij})_{1\le i,j \le n}$ satisfies $s_{ij}+s_{ji}=0$ for $i \ne j$ and $s_{jj}>0$. When $S$ is dependent upon $x, \bfrho, \bfc$, we just need to require the above condition (together with uniform boundedness and smoothness of $S$) uniformly for each $S(x,\bfrho,\bfc)$. In the case of two spatial dimensions $n=2$, this means that our local well-posedness theory can cover rotation matrices with rotation angle strictly less than $\pi/2$.\footnote{The case of rotation with angle $\pi/2$ results in a very interesting system, for which the question of local well/ill-posedness seems delicate.} To prove this extension, one can notice cancellations between the top order terms in the expression \begin{equation*}
\begin{split}
	\frac{d}{dt}  \frac12 \int  \bfc (\rd^m\bfrho)^2 + \sum_{j=1}^{n}  s_{jj}\bfrho |\rd^{m}\rd_{j}\bfc|^2. 
\end{split}
\end{equation*}  For $m$ large, the top order terms in the above expression are \begin{equation*}
\begin{split}
	I = \int \bfc \bfrho  \nb (\rd^m\bfrho) \cdot (S \nb (\rd^m\bfc)), \qquad  II=- \sum_{j=1}^{n} \int \bfc \bfrho  \rd_j (\rd^m\bfc) \rd_j (\rd^m\bfrho). 
\end{split}
\end{equation*} Then, in the expression $I$, the contributions from off-diagonal terms of $S$ cancel each other due to the condition $s_{ij} + s_{ji} = 0$ for $i\ne j$, and the contribution from the diagonal entries of $S$ cancel precisely with $II$. 
\end{remark}}

\begin{remark}[Ill-posedness in the case of opposite signs] \label{rem:illposed}
	We note that having the same signs in the right hand side of \eqref{eq:KS-1D} is crucial for well-posedness. Indeed, consider the following inviscid system obtained by reverting the sign of the right hand side for $\rd_t\bfc$: 
	\begin{equation}\label{eq:KS-1D-prime}
		\left\{
		\begin{aligned}
			\rd_t \bfrho & =  -\rd_x ( \bfrho \rd_x \bfc), \\
			\rd_t \bfc & =  \bfc\bfrho  .
		\end{aligned}
		\right.
	\end{equation} Similarly as before, $(\bfrho,\bfc) = (1, e^{t})$ provides a solution to the above. The linearization around this solution is given by \begin{equation}\label{eq:KS-1D-prime-lin}
		\left\{
		\begin{aligned}
			\rd_t \tilde{\bfrho} & =  -\rd_{xx}\tilde{\bfc}, \\
			\rd_t \tilde{\bfc} & =  e^{t}\tilde{\bfrho} + \tilde{\bfc}.
		\end{aligned}
		\right.
	\end{equation} It is not difficult to show that this linear equation is \textit{ill-posed}: there exist smooth data which immediately lose smoothness for $t>0$. Indeed, taking another time derivative, \begin{equation*}
		\begin{split}
			\rd_{tt}\tilde{\bfrho} = -e^{t}\rd_{xx}\tilde{\bfrho} - \rd_{xx}\tilde{\bfc}
		\end{split}
	\end{equation*} Modulo the coefficient $e^{t}$ and the lower order term $\rd_{xx}\tilde{\bfc}$, we have the Laplace equation in the $(t,x)$-coordinates, whose initial value problem is well-known to be illposed. A similar analysis can be repeated for the system \begin{equation}\label{eq:KS-1D-prime2}
	\left\{
	\begin{aligned}
		\rd_t \bfrho & =  -\rd_x ( \bfrho \rd_x \bfc), \\
		\rd_t \bfc & =  \bfrho  .
	\end{aligned}
	\right.
\end{equation}
\end{remark}

\subsection{Well-posedness with high order vanishing data}\label{subsec:v1}

Let us begin this section with stating a version of Theorem \ref{MainThm3}: for this purpose, we need to introduce 
\begin{definition}
	We introduce the following function spaces which is defined only for non-negative functions. We say that $g\ge 0$ belongs to $\dot{\calX}^{m,\gmm}$ for some $m\ge 1$ and $0<\gmm<2$ if {$g \in H^{m}$} and the quantity \begin{equation}\label{eq:Xmr}
	\begin{split}
	\nrm{g}_{\dot{\calX}^{m,\gmm}}^{2} := \nrm{g^{-\frac{\gmm}{2}} |\nb^mg| }_{L^2}^2 + \nrm{g^{-(1+\frac{\gmm-2}{2m})} \nb g }_{L^{2m}}^{2m} := \int \frac{|\nb^m g|^2}{g^{\gmm}} + \int \frac{|\nb g|^{2m}}{g^{2m+\gmm-2}}
	\end{split}
	\end{equation} is finite. {To be precise, the integrals in the right hand side are defined by
\begin{equation*}
	\begin{split}
		\lim_{\eps\to 0^+}  \left[ \int \frac{|\nb^m g|^2}{(g+\eps)^{\gmm}} + \int \frac{|\nb g|^{2m}}{(g+\eps)^{2m+\gmm-2}} \right] \, .
	\end{split}
\end{equation*} For $g \in H^{m}$, the above quantities are well-defined by allowing $+\infty$.}  In the case of $m = 0$, we simply set \begin{equation*}
	\begin{split}
	\nrm{g}_{\dot{\calX}^{0,\gmm}}^{2} := \int g^{2-\gmm},
	\end{split}
	\end{equation*} and then we define \begin{equation*}
	\begin{split}
	\nrm{g}_{ {\calX}^{m,\gmm}}^{2} := \sum_{k=0}^{m} \nrm{g}_{\dot{\calX}^{k,\gmm}}^{2} .
	\end{split}
	\end{equation*}
\end{definition}
We are now ready to precisely state the local well-posedness result for possibly vanishing data. 
\begin{theorem}[Well-posedness with vanishing data]\label{thm:lwp-rho-away}
	We consider the fully inviscid \eqref{eq:KS} system ($D_\rho, D_c = 0$) on $\bbT^d$ under the following assumptions on the coefficients:
	\begin{itemize}
		\item $\chi$ is $C^\infty$--smooth and uniformly positive,
		\item $\phi \in C^\infty(\bbT^d)$, and
		\item there exist  a $C^\infty$--smooth function $F_k : [0,\infty) \rightarrow (0,\infty)$ and $0<\gmm<2$ such that $k(z) = F_k(z^{\gmm})$.
	\end{itemize}
	Furthermore, we impose the following assumptions on the data: for some $2 + \frac{d}{2} < m $, 
	\begin{itemize}
		\item $\bfc_0 \in {{\calX}^{m+1,\gmm}} \cap L^\infty$, 
		\item $\bfrho_0 \in \calX^{m,1}  \cap L^\infty $, and
		\item there exists some $\dlt$ satisfying \begin{equation*}
		\begin{split}
		1-\gmm \le \dlt \le \min\left\{ \gmm, 2\left(m-1- \lfloor \frac{d}{2} \rfloor \right)\left( 1 - \frac{1-\frac{\gmm}{2}}{ m - \lfloor \frac{d}{2} \rfloor  } \right) \right\} 
		\end{split}
		\end{equation*} such that \begin{equation*}
		\begin{split}
		a \le \frac{\bfrho_0(x)}{\bfc_0^{\dlt}(x)} \le A 
		\end{split}
		\end{equation*} for some constants $A,a>0$.
	\end{itemize} Then the system \eqref{eq:KS} is locally-well posed: there exist some $T > 0$ and a unique solution $(\bfrho,\bfc)$ to \eqref{eq:KS} satisfying the initial condition and \begin{equation*}
	\begin{split}
	\bfrho\in L^\infty([0,T);\calX^{m,1} \cap L^\infty), \quad \bfc  \in L^\infty([0,T); {{\calX}^{m+1,\gmm}} \cap L^\infty), \quad \frac{\bfrho(x)}{\bfc^{\dlt}(x)} +\frac{\bfc^\dlt(x)}{\bfrho(x)} \in L^\infty([0,T);L^\infty).  
	\end{split}
	\end{equation*}
\end{theorem}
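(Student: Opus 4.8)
Since $D_{\bfrho}=D_{\bfc}=0$ and there is no fluid, \eqref{eq:KS} is a purely degenerate hyperbolic system, and the scheme is the usual one --- closed a priori estimates, existence by regularization, uniqueness by an $L^{2}$--difference estimate --- but carried out entirely inside the weighted classes $\calX^{m,\gmm}$. Put $\bff=\nb\bfc$ and $D_t=\rd_t+\chi(\bfc)\bff\cdot\nb$, so that
\begin{equation*}
D_t\bfrho=-\bfrho\,\nb\cdot(\chi(\bfc)\bff),\qquad \rd_t\bff=-k'(\bfc)\bfrho\,\bff-k(\bfc)\nb\bfrho .
\end{equation*}
As in Theorem~\ref{thm:lwp-away}, the only genuine loss of derivatives sits in $-\bfrho\,\nb\cdot(\chi(\bfc)\bff)$, matched against $-k(\bfc)\nb\bfrho$ through a weighted energy identity whose top order contributions cancel; the new point is that the matching weights are now the \emph{singular} pair --- $\bfc^{-\gmm}$ for $\rd^{m}\bff$ and $k(\bfc)\chi(\bfc)^{-1}\bfc^{-\gmm}\bfrho^{-1}$ for $\rd^{m}\bfrho$ --- so the error terms must be estimated with the weighted Gagliardo--Nirenberg--Sobolev inequalities of Lemma~\ref{lem:key}, and the comparability $\bfrho\aeq\bfc^{\dlt}$ has to be propagated simultaneously.

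\emph{Step 1 (propagation of the ratio).} A direct computation gives a transport identity of the form
\begin{equation*}
D_t\log\!\Big(\frac{\bfrho}{\bfc^{\dlt}}\Big)=-\big(\chi(\bfc)\lap\bfc+\chi'(\bfc)|\bff|^{2}\big)-\dlt\,\chi(\bfc)\frac{|\bff|^{2}}{\bfc}+\dlt\,\frac{k(\bfc)}{\bfc}\,\bfrho .
\end{equation*}
Using $2+\tfrac d2<m$ and $\gmm<2$, the first two groups are bounded via the pointwise inequalities in Lemma~\ref{lem:key} by $\nrm{\bfc}_{\calX^{m+1,\gmm}}$ and $\nrm{\bfc}_{L^{\infty}}$. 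The last term, which equals $\dlt\,F_k(\bfc^{\gmm})\,(\bfrho/\bfc^{\dlt})\,\bfc^{\dlt-1}$, is the delicate one; it is absorbed along the characteristics of $\chi(\bfc)\bff$, using that $\bfc$ is non-increasing together with the constraint $1-\gmm\le\dlt$, which is exactly what keeps the relevant time integral finite. This propagates $\tfrac12 a\le\bfrho/\bfc^{\dlt}\le 2A$ on some $[0,T_{0}]$, so that $\bfrho^{-1}\aeq\bfc^{-\dlt}$; the $L^{\infty}$ bounds on $\bfc$ (it decreases) and on $\bfrho$ (from $D_t\log\bfrho=-(\chi(\bfc)\lap\bfc+\chi'(\bfc)|\bff|^{2})$ being bounded) persist.

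\emph{Step 2 (weighted energy estimate).} Fix an $m$--th order partial derivative and set $R_m=\rd^{m}\bfrho$, $F_m=\rd^{m}\bff$; modulo lower order terms, $D_tR_m=-\chi(\bfc)\bfrho\,\nb\cdot F_m$ and $\rd_tF_m=-k(\bfc)\nb R_m$. Testing the first against $k(\bfc)\chi(\bfc)^{-1}\bfc^{-\gmm}\bfrho^{-1}R_m$ and the second against $\bfc^{-\gmm}F_m$ and integrating, the dangerous exchange terms $-\int\bfc^{-\gmm}k(\bfc)\,R_m\,\nb\cdot F_m$ and $-\int\bfc^{-\gmm}k(\bfc)\,F_m\cdot\nb R_m$ sum to a perfect divergence and are removed by one integration by parts (the same cancellation as in Theorem~\ref{thm:lwp-away}, now against the singular weight), while the transport term is moved onto the weight. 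Every surviving term is a product $\bfc^{-a}\bfrho^{-b}\prod_i|\rd^{\ell_i}\bfc|\prod_j|\rd^{\ell'_j}\bfrho|$ which, though ``lower order,'' is not controlled by ordinary Sobolev norms; it is estimated by \eqref{eq:key-1} applied to $g=\bfc$ with exponent $\gmm$ and to $g=\bfrho$ with exponent $1$, after Step~1 is used to interchange $\bfrho^{-1}$ and $\bfc^{-\dlt}$. Here the remaining hypotheses are consumed: the derivatives that hit $k(\bfc)$ are admissible because $k(z)=F_k(z^{\gmm})$ forces $\rd^{j}(k(\bfc))$ to come with the factor $\bfc^{\gmm-j}$ times lower derivatives of $\bfc$, fitting the $\bfc^{-\gmm}$ accounting; the commutator/transport factors of the type $\chi(\bfc)\,\bff\cdot\nb\bfrho/\bfrho\aeq|\nb\bfc|\,|\nb\bfrho|/\bfc^{\dlt}$ lie in $L^{\infty}$ only because $\dlt\le\gmm$ (since $|\nb\bfrho|/\bfrho^{1/2}$ and $|\nb\bfc|/\bfc^{\gmm/2}$ are in $L^{\infty}$ by Lemma~\ref{lem:key} and $\bfrho\aeq\bfc^{\dlt}$); and the upper bound $\dlt\le 2(m-1-\lfloor\tfrac d2\rfloor)\big(1-\tfrac{1-\gmm/2}{m-\lfloor d/2\rfloor}\big)$ is precisely the margin under which the weighted interpolation and $L^{\infty}$ estimates of Lemma~\ref{lem:key} remain applicable at every derivative order that occurs. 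Summing over all $m$--th derivatives, adding the easy lower-order ($L^{2},\dots,\dot H^{m-1}$) estimates and a direct bound for the second pieces $\int|\nb g|^{2m}/g^{2m+\gmm-2}$ of the $\calX$--norms, and combining with Step~1, one obtains $\tfrac{d}{dt}\mathcal E\lesssim P(\mathcal E)$ for a polynomial $P$, where $\mathcal E$ gathers the weighted energies and $\nrm{\bfrho/\bfc^{\dlt}}_{L^{\infty}}+\nrm{\bfc^{\dlt}/\bfrho}_{L^{\infty}}+\nrm{\bfrho}_{L^{\infty}}+\nrm{\bfc}_{L^{\infty}}$ and dominates $\nrm{\bfrho}_{\calX^{m,1}}^{2}+\nrm{\bfc}_{\calX^{m+1,\gmm}}^{2}$; hence $\mathcal E$ stays finite on a short $[0,T]$.

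\emph{Steps 3--4 (existence, uniqueness) and the main obstacle.} Existence follows by a regularization argument modelled on step~(iv) of the proof of Theorem~\ref{thm:lwp-away}: mollify the data keeping them in the prescribed class and respecting $a\le\bfrho_0^{\eps}/(\bfc_0^{\eps})^{\dlt}\le A$ (for instance mollify $\bfc_0$ and then choose $\bfrho_0^{\eps}$ to match), solve the regularized (viscous, or iterated linear) problem, use the $\eps$--uniform a priori bound of Step~2 to secure a common existence time, and pass to the limit; uniqueness is obtained exactly as in step~(v) of that proof, by a weighted $L^{2}$--difference estimate in the same modified variables, closed with Gr\"onwall. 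The main obstacle is Step~2: preserving the top order cancellation after the passage to the singular weights $\bfc^{-\gmm},\bfrho^{-1}$ and then bounding \emph{every} remainder through Lemma~\ref{lem:key} with exponents that remain inside the admissible window --- this is exactly where the hypotheses $0<\gmm<2$, $1-\gmm\le\dlt\le\min\{\gmm,\,2(m-1-\lfloor d/2\rfloor)(1-\tfrac{1-\gmm/2}{m-\lfloor d/2\rfloor})\}$ and $k=F_k(\cdot^{\gmm})$ are used; the propagation of the ratio in Step~1, where the possibly singular consumption term must be absorbed, is a close second.
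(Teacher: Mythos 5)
Your proposal follows essentially the same route as the paper: first propagate the pointwise comparability $a\lesssim \bfrho/\bfc^{\dlt}\lesssim A$ (the paper does this for $\bfc^{\dlt}/\bfrho$ directly, using $\dlt+\gmm-1\ge 0$ for the consumption term and the upper bound on $\dlt$ to control $\nb\cdot(\bfrho\nb\bfc)/\bfrho$ in $L^\infty$ via the weighted Gagliardo--Nirenberg--Sobolev inequalities), then close a weighted $\dot H^m$ energy estimate with exactly the weights $\bfc^{-\gmm}$ for $\rd^m\nb\bfc$ and $\bfrho^{-1}$ for $\rd^m\bfrho$, exploiting the cancellation $\int\nb\cdot(\rd^m\bfrho\,\rd^m\bff)=0$ of the top-order terms and estimating all remainders through Lemma \ref{lem:key} after exchanging $\bfrho^{-1}$ and $\bfc^{-\dlt}$, together with separate transport estimates for the terms $\int|\nb\bfc|^{2(m+1)}/\bfc^{2m+\gmm}$ and $\int|\nb\bfrho|^{2m}/\bfrho^{2m-1}$ of the $\calX$--norms, and finally Gr\"onwall, uniqueness by a weighted $L^2$ difference estimate, and existence by approximation. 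The one place where you deviate is the approximation scheme: you propose mollifying the data inside the degenerate class and solving a viscous or iterated linear regularization, which would force you to re-derive the ratio and weighted estimates in the presence of the regularizing terms (where, e.g., the sign structure used for the ratio is not automatic); the paper instead lifts the data to $\bfrho_0+\eps$, $\bfc_0+\eps$ and invokes the already-proved non-vanishing Theorem \ref{thm:lwp-away}, so the approximate solutions solve the same inviscid system and the a priori estimate \eqref{eq:apriori} applies verbatim, uniformly in $\eps$, before passing to the limit. This difference is one of convenience rather than substance, but if you keep your regularization you must supply those extra $\eps$-uniform estimates, which your sketch currently leaves unaddressed.
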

\begin{remark}[Examples of initial data]
	When a function is bounded away from 0, the $\calX^{m,\gmm}$-norm is simply equivalent with a usual Sobolev norm. In the case when the function has an isolated zero (say the origin), one can ensure that it belongs to $\calX^{m,\gmm}$ as long as the order of vanishing is high enough. To see this, say $g(x) \simeq |x|^{n}$ for some $n\ge 0$ in $\bbT^d$. Then, we formally compute that, near $x = 0$, \begin{equation*}
	\begin{split}
	\int \frac{|\nb^m g|^2}{g^{\gmm}} \simeq \int |x|^{ (2-\gmm)n - 2m}, \quad \int \frac{|\nb g|^{2m}}{g^{2m+\gmm-2}} \simeq \int |x|^{ (2-\gmm)n - 2m }. 
	\end{split}
	\end{equation*} Therefore, locally at $x = 0$,  $g \in \calX^{m,\gmm}$ if and only if \begin{equation*}
	\begin{split}
	n > \frac{2m-d}{2-\gmm}. 
	\end{split}
	\end{equation*} (This condition actually forces the condition $\gmm<2$ in the statement of Theorem \ref{thm:lwp-rho-away}.)
\end{remark}

We divide the proof into three parts. First, in \ref{subsubsec:GNS}, we establish some weighted inequalities of Gagliardo--Nirenberg--Sobolev type. We prove a priori estimates for the solution in \ref{subsubsec:apriori}. Finally, we show existence and uniqueness of the solution satisfying the a priori estimates in \ref{subsubsec:eandu}.

\subsubsection{A chain of weighted Gagliardo--Nirenberg--Sobolev inequalities}\label{subsubsec:GNS}

In this section, we shall state a series of inequalities which generalize the well known Gagliardo--Nirenberg--Sobolev inequality to the weighted case. We briefly remark on the notation used: Given an integer $m\ge 0$, we denote $\nb^m g$ to denote the vector consisting of all partial derivatives for $g$ of order $m$. On the other hand, given some $d$-vector $\ell = (\ell^1,\cdots, \ell^d)$ with integer $\ell^i\ge0$, we define $\rd^\ell g = \rd_{x_1}^{(\ell^1)} \cdots \rd_{x_d}^{(\ell^d)}g$. In particular, $\rd^\ell g$ is an element of $\nb^{|\ell|} g$, where $|\ell| = \ell^1 + \cdots + \ell^d$. With the above notation, we have the following key lemma:
\begin{lemma}\label{lem:inequality}
	Let $\bfc \ge 0$ belong to $H^{m}(\bbT^d)$. For any integer $m\ge 1$ and a $k$-tuple $(\ell_1, \cdots, \ell_k)$ of $d$-vectors satisfying $m = |\ell_1| + \cdots + |\ell_k|$, we have that\begin{equation}\label{eq:inequality}
	\begin{split}
	\int_{ \{ \bfc > 0 \} } \frac{ 1 }{ \bfc^{2k-1} } \prod_{1\le i \le k} |\rd^{\ell_i}\bfc|^2 \lesssim_{m,d} \int_{ \{ \bfc > 0 \} } \frac{|\nb^m\bfc|^2}{\bfc} + \int_{ \{ \bfc > 0 \} } \frac{|\nb\bfc|^{2m}}{\bfc^{2m-1}}
	\end{split}
	\end{equation} {as long as the expression on the right hand side is finite.} In particular, under the same assumptions, we obtain that for any $0\le n \le m$, 
	\begin{equation}\label{eq:inequality-conseq}
	\begin{split}
	\nrm{ \nb^{m-n}( \frac{\nb^{n}\bfc}{\sqrt{\bfc} })}_{L^2({ \{ \bfc > 0 \} })}^2 \lesssim_{m,d}  \int_{ \{ \bfc > 0 \} } \frac{|\nb^m\bfc|^2}{\bfc} + \int_{ \{ \bfc > 0 \} } \frac{|\nb\bfc|^{2m}}{\bfc^{2m-1}} . 
	\end{split}
	\end{equation}
\end{lemma}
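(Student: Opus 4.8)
\emph{Reduction, and the consequence \eqref{eq:inequality-conseq}.} The plan is to prove the integral inequality \eqref{eq:inequality}; the inequality \eqref{eq:inequality-conseq} is then a purely formal consequence. Indeed, by the Leibniz and Fa\`a di Bruno rules, for multi-indices $\alpha,\beta$ with $|\alpha|=m-n$ and $|\beta|=n$, the function $\rd^{\alpha}\big(\bfc^{-1/2}\rd^{\beta}\bfc\big)$ is, on $\{\bfc>0\}$, a finite sum of terms of the form $\bfc^{-(k-\frac12)}\prod_{i=1}^{k}\rd^{\ell_i}\bfc$ with $|\ell_1|+\cdots+|\ell_k|=m$; discarding any trivial factor $\rd^{0}\bfc=\bfc$ lowers the number of factors and the weight exponent consistently, so after squaring each such term is controlled by the integrand on the left of \eqref{eq:inequality} for an admissible tuple. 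Summing over $\alpha,\beta$ and applying \eqref{eq:inequality} gives \eqref{eq:inequality-conseq}. (Note that \eqref{eq:inequality} is precisely the $\gmm=1$ instance of the Key Lemma \ref{lem:key}.)

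\emph{Reduction to smooth, strictly positive $\bfc$.} It suffices to prove \eqref{eq:inequality} for $\bfc\in C^{\infty}(\bbT^d)$ with $\inf\bfc>0$, with a constant depending only on $m,d$. For a general $\bfc\ge0$ in $H^m$ one applies this to $\bfc_{\delta,\eps}:=\bfc*\varphi_\delta+\eps$, which is smooth and bounded below by $\eps$, and lets $\delta\to0$ and then $\eps\to0$. On the right-hand side the $\delta$-limit is handled by $L^2$-convergence of the derivatives together with boundedness of the weights, and the $\eps$-limit by monotone convergence; the limits are exactly the integrals defining $\mathcal R:=\int_{\{\bfc>0\}}|\nb^m\bfc|^2/\bfc+\int_{\{\bfc>0\}}|\nb\bfc|^{2m}/\bfc^{2m-1}$, finite by hypothesis. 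In the only nontrivial case (namely $\mathcal R<\infty$), this finiteness forces $\nb^j\bfc=0$ a.e.\ on $\{\bfc=0\}$ for $1\le j\le m$, so the contributions of $\{\bfc=0\}$ drop out and on the left-hand side one recovers $\int_{\{\bfc>0\}}$ in the limit (via Fatou once the estimate is established for the approximants). These are routine measure-theoretic arguments, but the hypothesis that $\mathcal R$ is finite enters here essentially.

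\emph{The induction.} Assume $\bfc$ smooth and strictly positive. It is convenient to prove a slightly more general statement, of which \eqref{eq:inequality} is the case of an even number of factors repeated in pairs: for every $N\ge2$ and all multi-indices $\mu_1,\dots,\mu_N$ with $|\mu_l|\ge1$, $\max_l|\mu_l|\le m$ and $\sum_{l}|\mu_l|=2m$, one has
\[
\int \bfc^{-(N-1)}\prod_{l=1}^{N}|\rd^{\mu_l}\bfc|\ \lesssim_{m,d}\ \mathcal R .
\]
The exponent $N-1$ is the only one under which this quantity is scale-consistent with $\mathcal R$ as the orders are reshuffled, and it is exactly the exponent produced by each of the manipulations below. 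We induct on a complexity measure built from $m$ and the multiset $\{|\mu_1|,\dots,|\mu_N|\}$ — e.g.\ $\Phi:=\sum_l|\mu_l|^2$, refined if necessary. The base families are: all $|\mu_l|=1$, whence $N=2m$ and the integral is $\le\int|\nb\bfc|^{2m}/\bfc^{2m-1}$; and the case where some $|\mu_l|=m$, which by a single Cauchy--Schwarz pairing that factor with $\bfc^{-1/2}$ is bounded by $\big(\int|\nb^m\bfc|^2/\bfc\big)^{1/2}$ times an integral of the same shape with fewer factors, strictly smaller in the induction order (or directly $\le\int|\nb^m\bfc|^2/\bfc$ when $N=2$). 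For the inductive step (all $|\mu_l|\le m-1$, some $|\mu_j|\ge2$), write $\rd^{\mu_j}\bfc=\rd_s\big(\rd^{\mu_j-e_s}\bfc\big)$ and integrate by parts in $x_s$: the transferred derivative falls either on the weight $\bfc^{-(N-1)}$, yielding a term with $N+1$ factors (one of order $1$), weight $\bfc^{-N}$, and strictly smaller $\Phi$, or on another factor $\rd^{\mu_i}\bfc$, producing $\rd^{\mu_i+e_s}\bfc$; choosing $i$ with $|\mu_i|$ smallest and applying Cauchy--Schwarz turns each such term into a product of two integrals of the same shape and strictly smaller complexity, up to a multiple of the original integral with arbitrarily small constant which is absorbed. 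Since every step strictly decreases the complexity and the two base families are reached, the recursion terminates, and unwinding it yields the bound with a constant depending only on $m,d$, hence \eqref{eq:inequality}.

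\emph{Main obstacle.} The crux is the last paragraph: organizing the integration by parts and the subsequent Cauchy--Schwarz so that \emph{every} produced term is genuinely simpler in a single, well-founded complexity order, with the weight exponent always coming out to $\bfc^{-(N-1)}$. The delicate configurations are those in which several factors share one order $p\ge2$ — then naively transferring a derivative onto another factor would \emph{increase} $\sum_l|\mu_l|^2$, so one must instead route it onto the weight (which always lowers the complexity) or iterate one more step; likewise the borderline term in which a factor's order is raised to exactly $m$, which is peeled off against $\bfc^{-1/2}$ to generate $\big(\int|\nb^m\bfc|^2/\bfc\big)^{1/2}$. The measure-theoretic limiting procedure of the second paragraph, though routine, also has to be carried out with some care owing to the singular weights.
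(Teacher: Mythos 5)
Your reduction to smooth, strictly positive $\bfc$ and your derivation of \eqref{eq:inequality-conseq} from \eqref{eq:inequality} are fine and match what the paper does. The gap is in the core of the argument, the induction on general products $\int \bfc^{-(N-1)}\prod_l|\rd^{\mu_l}\bfc|$ ordered by $\Phi=\sum_l|\mu_l|^2$, which you only sketch and whose hardest part you yourself label the ``main obstacle.'' As described, the step does not go through: (a) integration by parts does not let you choose which factor receives the transferred derivative --- Leibniz produces \emph{all} terms, including those where it lands on a factor of order $\ge |\mu_j|-1$, and for those $\Phi$ strictly increases, so your well-founded ordering is not respected; (b) the proposed repair, a Cauchy--Schwarz splitting into ``two integrals of the same shape and strictly smaller complexity,'' requires partitioning the multiset of orders into two groups each of total order exactly $m$ (so that after squaring each group again has total order $2m$ with weight $\bfc^{-(\#\mathrm{factors}-1)}$); such a partition need not exist --- e.g.\ orders $(m-1,m-1,2)$ for $m\ge 4$ admit no subset summing to $m$ --- so one is forced into H\"older with unequal exponents, and then the bookkeeping of weights, powers, and absorption constants is exactly the unverified crux. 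In short, what you have is a plausible program, not a proof.

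The paper's route supplies the missing idea and is worth internalizing: it never inducts on general products. Instead it first reduces \eqref{eq:inequality}, by a single H\"older application with exponents $m/|\ell_i|$ (note $\sum_i|\ell_i|/m=1$ and $\sum_i(2-|\ell_i|/m)=2k-1$, so the weight splits exactly), to the one-derivative-order quantities $I_\ell=\int |\nb^\ell \bfc|^{2m/\ell}/\bfc^{2m/\ell-1}$, and then proves the interpolation $I_\ell\lesssim I_1+I_m$ for $1<\ell<m$ by one integration-by-parts identity (writing $|\rd^\ell\bfc|^{2m/\ell-1}\mathrm{sgn}(\rd^\ell\bfc)\,\rd^\ell\bfc$ and moving one derivative), followed by H\"older, $\eps$-Young, and a weighted summation over $\ell=2,\dots,m-1$ to absorb the $I_{\ell\pm1}$ terms. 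This scheme sidesteps entirely the combinatorial difficulties your induction runs into, because all the delicate exponent arithmetic is confined to the homogeneous quantities $I_\ell$, where it can be checked explicitly. If you want to salvage your approach, you would need to either prove your generalized product estimate by first passing through such diagonal quantities anyway, or carry out the absorption argument with unequal-exponent H\"older in full detail --- neither of which is present in the proposal.
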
 
As a corollary, we obtain the following:
\begin{corollary}\label{cor:L-infty}
	Assume that for some integer $k\ge 1$, $\bfc\ge 0$ satisfies \begin{equation*}
	\begin{split}
	\int_{ \{ \bfc > 0 \} } \frac{|\nb^m\bfc|^2}{\bfc} + \int_{ \{ \bfc > 0 \} } \frac{|\nb\bfc|^{2m}}{\bfc^{2m-1}}  < + \infty
	\end{split}
	\end{equation*} for $m = 1, \cdots,  k + 1 + \lfloor \frac{d}{2} \rfloor $. Then
	\begin{equation}\label{eq:inequality-infty}
	\begin{split}
	\sup_{x : \bfc(x) >0 } \frac{|\nb\bfc(x)|^{k}}{\bfc(x)^{k-\frac{1}{2}}} \le C_d\sum_{m=1}^{\lfloor \frac{d}{2} \rfloor +1 + k} \left( \int_{ \{ \bfc > 0 \} } \frac{|\nb^m\bfc|^2}{\bfc} + \int_{ \{ \bfc > 0 \} } \frac{|\nb\bfc|^{2m}}{\bfc^{2m-1}}  \right) .
	\end{split}
	\end{equation}
\end{corollary}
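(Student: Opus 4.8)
\emph{Overview.} The plan is to derive the pointwise bound \eqref{eq:inequality-infty} from Lemma \ref{lem:inequality} in two steps: first I would produce, via a Sobolev embedding, a uniform estimate of the form $|\nabla^n\bfc(x)|\lesssim\sqrt{\bfc(x)}$ for every $n=1,\dots,k$; then I would bootstrap these into the sharp weighted $L^\infty$ bound through a Glaeser/Taylor argument that exploits $\bfc\ge0$. Throughout, write $S$ for the right-hand side of \eqref{eq:inequality-infty}.

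\emph{Step 1 (a chain of $\sqrt{\bfc}$-weighted $L^\infty$ bounds).} Fix $n\in\{1,\dots,k\}$. For each integer $l$ with $0\le l\le\lfloor d/2\rfloor+1$, apply \eqref{eq:inequality-conseq} of Lemma \ref{lem:inequality} with $m=n+l$; since $n+l\le k+1+\lfloor d/2\rfloor$, the hypotheses of the corollary guarantee finiteness of the relevant right-hand side, and we get $\nrm{\nabla^{l}(\nabla^{n}\bfc/\sqrt{\bfc})}_{L^2(\{\bfc>0\})}^2\lesssim_{d} S$. Because $\bfc\ge0$ attains its minimum on $\{\bfc=0\}$, $\nabla\bfc$ vanishes a.e.\ there, so $\nabla^n\bfc/\sqrt{\bfc}$ (extended by $0$ on $\{\bfc=0\}$) lies in $H^{\lfloor d/2\rfloor+1}(\bbT^d)$ with squared norm $\lesssim_d S$; the embedding $H^{\lfloor d/2\rfloor+1}(\bbT^d)\hookrightarrow L^\infty(\bbT^d)$ then yields $|\nabla^n\bfc(x)|\le C_d\,\sqrt{S}\,\sqrt{\bfc(x)}$ for all $x$ and all $n=1,\dots,k$. (Equivalently, $u:=\sqrt{\bfc}$ is then $C^k$ on $\bbT^d$ with $\nrm{u}_{C^k}^2\lesssim_d S$.) This already settles the case $k=1$.

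\emph{Step 2 (Glaeser bootstrap to the sharp exponent), and the main obstacle.} For $k\ge2$ it suffices to upgrade the $n=1$ estimate to $|\nabla\bfc(x)|\lesssim\bfc(x)^{1-\frac1{2k}}$, since then $|\nabla\bfc(x)|^k\lesssim\bfc(x)^{k-\frac12}$. Here I would fix $x$ with $t:=\bfc(x)>0$, set $e=\nabla\bfc(x)/|\nabla\bfc(x)|$, and Taylor-expand $\bfc$ along $[x-se,x]$ to order $k$ with Lagrange remainder; nonnegativity of $\bfc$ gives, for $s$ in the range where $\bfc\le2t$ on the segment (a range of length $\gtrsim\sqrt{t/S}$, using the $n=1$ bound of Step 1 as a local Lipschitz estimate),
\[
|\nabla\bfc(x)|\,s\ \le\ t\ +\ \sum_{n=2}^{k-1}\frac{s^{n}}{n!}\,|\nabla^n\bfc(x)|\ +\ \frac{s^{k}}{k!}\,\sup_{[x-se,x]}|\nabla^k\bfc|,
\]
with the remainder term always controlled by the $n=k$ bound $\sup|\nabla^k\bfc|\lesssim\sqrt{S}\,\sqrt{t}$. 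Optimizing $s$ improves the exponent $\theta$ in any bound $|\nabla\bfc(x)|\lesssim\bfc(x)^\theta$; the difficulty—and this is the main obstacle—is that with only the crude Step 1 inputs $|\nabla^n\bfc(x)|\lesssim\sqrt{\bfc(x)}$ for the intermediate orders $2\le n\le k-1$, one improvement raises $\theta$ only from $\tfrac12$ to $\tfrac34$ and then stalls. One must therefore run the improvement on $\nabla\bfc,\dots,\nabla^{k-1}\bfc$ \emph{in tandem} (improving the intermediate bounds $|\nabla^n\bfc(x)|\lesssim\bfc(x)^{\theta_n}$ by auxiliary Taylor/Glaeser arguments—e.g.\ applying the two-sided second-order expansion to $\bfc$ and the Glaeser inequality to $|\nabla\bfc|^2\ge0$) and feed them back, all the while verifying that the admissible Taylor radius $r(x)\gtrsim\sqrt{t/S}$ stays large enough relative to the optimal $s$. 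Carrying this coupled bootstrap up the finite ladder $\theta_n\to 1-\tfrac n{2k}$ produces $|\nabla\bfc(x)|\lesssim\bfc(x)^{1-\frac1{2k}}$ with constant a fixed power of $S$ (and of $\nrm{\bfc}_{L^\infty}$, itself $\lesssim S$ by Sobolev embedding), which gives \eqref{eq:inequality-infty}.
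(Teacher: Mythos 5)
Your Step~1 is correct and already uses the right tools (Lemma \ref{lem:inequality} plus the embedding $H^{1+\lfloor d/2\rfloor}(\bbT^d)\subset L^\infty$), but for $k\ge 2$ the entire content of the corollary lies in your Step~2, and Step~2 is not a proof: you yourself identify the ``main obstacle'' and then assert, without carrying it out, that a coupled Glaeser/Taylor bootstrap on all intermediate orders climbs the ladder $\theta_n\to 1-\frac{n}{2k}$. This closure is genuinely nontrivial. With only the Step~1 input $|\nb^n\bfc|\le C\sqrt{S}\sqrt{\bfc}$, the admissible Taylor radius you allow is $s\lesssim \sqrt{\bfc(x)/S}$, and on the region where $\bfc(x)$ is small compared to (a power of) $S$ the constrained optimization returns nothing better than the hypothesis $|\nb\bfc|\lesssim\sqrt{S}\sqrt{\bfc}$ itself (and the unconstrained optimization produces leftover terms such as $S^{1/3}\bfc^{2/3}$ for $k=2$ that are not absorbable into $S^{1/2}\bfc^{3/4}$ there); so the self-improvement must be restarted exactly where it degenerates, and the ``in tandem'' improvement of $|\nb^n\bfc|\lesssim\bfc^{\theta_n}$ for $2\le n\le k-1$ is only gestured at. In addition, even if the bootstrap were completed, your constant is ``a fixed power of $S$ and of $\nrm{\bfc}_{L^\infty}$'', and the parenthetical claim $\nrm{\bfc}_{L^\infty}\lesssim S$ is false (take $\bfc$ equal to a large constant: every integral in $S$ vanishes), so the statement you would obtain is not \eqref{eq:inequality-infty} as written.

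The intended argument bypasses pointwise Glaeser-type reasoning entirely and is essentially your Step~1 applied to the correct quantity: set $F:=|\nb\bfc|^{k}/\bfc^{k-\frac12}$ on $\{\bfc>0\}$. Then $\nrm{F}_{L^2(\{\bfc>0\})}^2$ is exactly the term $\int|\nb\bfc|^{2k}/\bfc^{2k-1}$ of the hypothesis with $m=k$, and for each $s\le 1+\lfloor\frac d2\rfloor$ every term arising from expanding $\nb^{s}F$ is, after squaring, of the form $\bfc^{-(2j-1)}\prod_{i\le j}|\rd^{\ell_i}\bfc|^2$ with $|\ell_1|+\cdots+|\ell_j|=k+s\le k+1+\lfloor\frac d2\rfloor$, hence its $L^2$ norm is bounded by \eqref{eq:inequality} with $m=k+s$. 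The embedding $H^{1+\lfloor d/2\rfloor}\subset L^\infty$ applied to $F$ then gives \eqref{eq:inequality-infty} directly, with the claimed dependence on the right-hand side and with no loss at the endpoint exponent. Applying the Sobolev-embedding mechanism to $\nb^{n}\bfc/\sqrt{\bfc}$ instead of to $F$ is precisely what costs you the sharp exponent $k-\frac12$ and forces the unproved bootstrap; I recommend redoing the proof along these lines.
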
 
\begin{proof}
	The proof is immediate from Lemma \ref{lem:inequality} and the embedding $H^s(\bbT^d) \subset L^\infty(\bbT^d)$ for $s> \frac{d}{2}$. Indeed, we have that \begin{equation*}
	\begin{split}
	 \frac{|\nb\bfc(x)|^{k}}{\bfc(x)^{k-\frac{1}{2}}} \in L^2({ \{ \bfc > 0 \} }), 
	\end{split}
	\end{equation*} and for all $s \le 1 + \lfloor \frac{d}{2} \rfloor $, \eqref{eq:inequality} applies to each term in the expression \begin{equation*}
	\begin{split}
	\nb^{s} \left(   \frac{|\nb\bfc(x)|^{k}}{\bfc(x)^{k-\frac{1}{2}}} \right) 
	\end{split}
	\end{equation*} to give an $L^2$ bound, with $m = k + s$. \end{proof}

\begin{proof}[{\bf{Proof of Lemma \ref{lem:inequality}}}]
	The proof is based on first establishing
	\begin{equation}\label{eq:ineq-GNS}
	\begin{split}
	\int_{ \{ \bfc > 0 \} } \frac{|\nb^\ell \bfc|^{\frac{2m}{\ell}}}{\bfc^{\frac{2m}{\ell}-1}} \lesssim_{m,d} \int_{ \{ \bfc > 0 \} } \frac{|\nb^m\bfc|^2}{\bfc} + \int_{ \{ \bfc > 0 \} } \frac{|\nb\bfc|^{2m}}{\bfc^{2m-1}} 
	\end{split}
	\end{equation} for every $1 < \ell < m$, {assuming that $\bfc\ge0$ is a given function with finite right-hand side. In the arguments below, we shall actually assume that $\bfc>0$. This is possible since we may first prove \eqref{eq:ineq-GNS} with $\bfc$ replaced by $\bfc+\eps$ and then take the limit $\eps\to0$ first on the right-hand side (using the finiteness assumption for $\bfc$ given in Lemma \ref{lem:inequality}). After that, we may take the limit $\eps\to0$ on the left-hand side.} 

	Note that each term in \eqref{eq:ineq-GNS} has $2m$ derivatives of $\bfc$ and 1-homogeneity in $\bfc$. Once we have this inequality, we immediately deduce \begin{equation*}
	\begin{split}
	\int \frac{ 1 }{ \bfc^{2k-1} } \prod_{1\le i \le k} |\rd^{\ell_i}\bfc|^2\le \prod_{1\le i\le k} \left(\int \frac{|\nb^{|\ell_i|} \bfc|^{\frac{2m}{|\ell_i|}}}{\bfc^{\frac{2m}{|\ell_i|}-1}}\right)^{\frac{|\ell_i|}{m}} \lesssim_{m,d} \int \frac{|\nb^m\bfc|^2}{\bfc} + \int \frac{|\nb\bfc|^{2m}}{\bfc^{2m-1}}  
	\end{split}
	\end{equation*} from H\"older's inequality. 
	
	\medskip
	
	\noindent {Since there is nothing to prove in the case $m=2$, we shall verify \eqref{eq:ineq-GNS} for some $m>2$. We need to prove that} \begin{equation}\label{eq:goal}
		\begin{split}
			I_{\ell} \lesssim_{m,d} {I_1} + I_{m}, \quad 1<\ell< m
		\end{split}
	\end{equation}  with \begin{equation*}
		\begin{split}
			I_{\ell} := \int \frac{|\nb^\ell  \bfc|^{\frac{{2m}}{\ell}}}{ \bfc^{\frac{{2m}}{\ell}-1}} .
		\end{split}
	\end{equation*} For a partial derivative $\rd^\ell$ of order $\ell$,  \begin{equation*}
		\begin{split}
			&\int \frac{|\rd^\ell \bfc|^{\frac{2m}{\ell}-1}}{\bfc^{{\frac{2m}{\ell}-1}}} \mathrm{sgn} (\rd^{\ell}\bfc)  \rd^\ell \bfc   \\
			&\quad = - ({\frac{2m}{\ell}-1})\int \frac{|\rd^\ell \bfc|^{\frac{2m}{\ell}-2}}{\bfc^{\frac{2m}{\ell}-1}} \, \rd^{\ell-1} \bfc \, \rd^{\ell+1} \bfc + ({\frac{2m}{\ell}-1} )\int \frac{|\rd^\ell \bfc|^{\frac{2m}{\ell}-1}}{\bfc^{\frac{2m}{\ell}}} \, \rd \bfc \, \rd^{\ell-1} \bfc \, \mathrm{sgn}(\rd^\ell \bfc). 
		\end{split}
	\end{equation*} Note that $2<\frac{2m}{\ell} \le m$. Applying  H\"older's inequality to the right hand side, we have \begin{equation*}
		\begin{split}
			I_{\ell} \lesssim_{\ell,m} I_{\ell-1}^{ \frac{\ell-1}{2m}} I_{\ell}^{1-\frac{\ell}{m}} I_{\ell+1}^{\frac{\ell+1}{2m}} + I_1^{\frac{1}{2m}} I_{\ell-1}^{\frac{\ell-1}{2m}} I_{\ell}^{1-\frac{\ell}{2m}}. 
		\end{split}
	\end{equation*}  With $\epsilon$-Young inequality, we obtain \begin{equation*}
		\begin{split}
			I_{\ell} \le \eps I_\ell + \eps I_{\ell-1} + C_{\eps,\ell,m} ( I_1 +   I_{\ell+1} ). 
		\end{split}
	\end{equation*} After a suitable weighted summation of the inequality in all of the cases $\ell = 2, \cdots, m-1$, one can obtain \eqref{eq:goal}. 
\end{proof}
At this point, one may note that Lemma \ref{lem:key}  is a straightforward generalization of the above. The proof can be done in a completely parallel manner with Lemma \ref{lem:inequality}.

{
	\begin{remark}
		In the special case $m=2$, we have after an integration by parts 
		\[
		\int\frac{{\abs{\nabla \bfc}}^4}{\bfc^3}=-\int\frac{\abs{\nabla \bfc}^2\Delta \bfc}{\bfc^2}+3\int\frac{{\abs{\nabla \bfc}}^4}{\bfc^3},
		\]
		which implies that
		\[
		2\int\frac{{\abs{\nabla \bfc}}^4}{\bfc^3}\le \int\frac{\abs{\nabla \bfc}^2\Delta \bfc}{\bfc^2}
		\le \int\frac{{\abs{\nabla \bfc}}^4}{\bfc^3}+\frac{1}{4}\int \frac{\abs{\Delta \bfc}^2}{\bfc}
		\]
		by H\"older's inequality. Therefore, the first term on the right-hand side of \eqref{eq:ineq-GNS} is controlled by the other term. Unfortunately, this does not seem to hold for $m>2$. 
	\end{remark}

}

\subsubsection{A priori estimates}\label{subsubsec:apriori}

{Let us proceed to obtain a priori estimates for the solution, assuming that $\bfc, \bfrho$ are sufficiently smooth and non-negative.} Furthermore, we take the simplifying assumption that $k(\bfc) = \bfc^\gmm$ and $\chi(\bfc) = 1$. Extending the proof to the case of general $k$ and $\chi$ as stated in Theorem \ref{thm:lwp-rho-away} is straightforward. 

\medskip

\noindent \textit{(i) Ratio estimates}: We would like to propagate $\bfrho\lesssim \bfc^\dlt $ and $\bfc^\dlt \lesssim \bfrho$. To this end, \begin{equation*}
\begin{split}
\frac{\partial}{\partial t}  \frac{\bfc^\dlt}{\bfrho} = \frac{\bfc^\dlt}{\bfrho}\frac{ \nb\cdot(\chi(\bfc)\bfrho\nb\bfc)}{\bfrho} - \dlt k(\bfc) \bfc^{\dlt-1} 
\end{split}
\end{equation*} so that recalling $\chi = 1$ and $k(\bfc)=\bfc^{\gmm}$, 
\begin{equation*}
\begin{split}
{\frac{d}{dt}} \left(\nrm{ \frac{\bfc^\dlt}{\bfrho}}_{L^\infty} + \nrm{ \frac{\bfrho}{\bfc^\dlt}}_{L^\infty}\right) \le C\nrm{  \frac{ \nb\cdot(\bfrho\nb\bfc)}{\bfrho}}_{L^\infty} \left(\nrm{ \frac{\bfc^\dlt}{\bfrho}}_{L^\infty} + \nrm{ \frac{\bfrho}{\bfc^\dlt}}_{L^\infty}\right)  + C\nrm{\bfc^{\dlt+\gmm-1}}_{L^\infty}. 
\end{split}
\end{equation*} From the assumption $\dlt+\gmm-1\ge 0$, we have that \begin{equation*}
\begin{split}
\nrm{\bfc^{\dlt+\gmm-1}}_{L^\infty}\le \nrm{\bfc}_{L^\infty}^{\dlt+\gmm-1}
\end{split}
\end{equation*} and \begin{equation*}
\begin{split}
\nrm{  \frac{ \nb\cdot(\bfrho\nb\bfc)}{\bfrho}}_{L^\infty} \le C( \nrm{\lap\bfc}_{L^\infty} + \nrm{ \frac{\bfc^\dlt}{\bfrho}}_{L^\infty}^a \nrm{\frac{\nb\bfrho}{\bfrho^{1-a}}}_{L^\infty} \nrm{\frac{\nb\bfc}{\bfc^{\dlt a}}}_{L^\infty} ),
\end{split}
\end{equation*} where $0<a<1$ is to be chosen below. From \eqref{eq:inequality-infty} in Corollary \ref{cor:L-infty}, we have that \begin{equation*}
\begin{split}
\nrm{\frac{\nb\bfrho}{\bfrho^{1-a}}}_{L^\infty} \le C \nrm{\bfrho}_{{\calX}^{m,1}}^{2a}, \quad a := \frac{1}{2(m-1-\lfloor \frac{d}{2} \rfloor)}. 
\end{split}
\end{equation*} Hence, we have \begin{equation*}
\begin{split}
 \nrm{\frac{\nb\bfc}{\bfc^{\dlt a}}}_{L^\infty} \le C \nrm{\bfc}_{ {\calX}^{m+1,\gmm} }^{\frac{1}{m- \lfloor \frac{d}{2} \rfloor }} \nrm{\bfc}_{L^\infty}^{\eps}
\end{split}
\end{equation*} for some $\eps\ge0$ if \begin{equation*}
\begin{split}
\dlt\le  2\left(m-1- \lfloor \frac{d}{2} \rfloor \right)\left( 1 - \frac{1-\frac{\gmm}{2}}{ m - \lfloor \frac{d}{2} \rfloor  } \right). 
\end{split}
\end{equation*} This, together with $ \nrm{\lap\bfc}_{L^\infty} \lesssim \nrm{c}_{L^\infty}^{\frac{\gmm}{2}} \nrm{\bfc}_{ {\calX}^{m+1,\gmm} } $, we have \begin{equation}\label{eq:L-infty}
\begin{split}
\nrm{  \frac{ \nb\cdot(\bfrho\nb\bfc)}{\bfrho}}_{L^\infty} \le C(1 + \nrm{\bfc}_{L^\infty})^b  \left(1 + \nrm{ \frac{\bfc^\dlt}{\bfrho}}_{L^\infty}\right)^{a} (1 + \nrm{\bfc}_{ {\calX}^{m+1,\gmm} } + \nrm{\bfrho}_{ {\calX}^{m,1} } )^2 
\end{split}
\end{equation} and then \begin{equation}\label{eq:apriori-1}
\begin{split}
\frac{d}{dt}  \left(\nrm{ \frac{\bfc^\dlt}{\bfrho}}_{L^\infty} + \nrm{ \frac{\bfrho}{\bfc^\dlt}}_{L^\infty}\right) \le C \left(1 + \nrm{ \frac{\bfc^\dlt}{\bfrho}}_{L^\infty} + \nrm{ \frac{\bfrho}{\bfc^\dlt}}_{L^\infty}\right)^{1 + a} (1 + \nrm{\bfc}_{L^\infty})^b (1 + \nrm{\bfc}_{ {\calX}^{m+1,\gmm} } + \nrm{\bfrho}_{ {\calX}^{m,1} } )^2 
\end{split}
\end{equation} follows, with some $b = b(\gmm,\dlt,d)>0$. Before we proceed, we note the trivial estimate \begin{equation}\label{eq:apriori-2}
\begin{split}
\frac{d}{dt} \left( \nrm{\bfc}_{L^\infty} + \nrm{\bfrho}_{L^\infty}  \right) \le C \nrm{\lap\bfc}_{L^\infty}\nrm{\bfrho}_{L^\infty} + C\nrm{\nb\bfrho}_{L^\infty}\nrm{\nb\bfc}_{L^\infty} \le C \nrm{\bfc}_{\calX^{m+1,\gmm}} (\nrm{\bfrho}_{L^\infty} + \nrm{\bfrho}_{\calX^{m,1}}  ),
\end{split}
\end{equation} since $\bfc$ is pointwise decreasing with time.
\medskip

\noindent \textit{(ii) $\calX^m$--estimates}:
We begin with the system \begin{equation}  \label{eq:KS-1st}
\left\{
\begin{aligned} 
\rd_t\bff & = -k'(\bfc)\bfrho\bff - k(\bfc)\nb\bfrho ,\\
\rd_t\bfrho & = - \chi(\bfc)\bff \cdot\nb\bfrho - \chi(\bfc)\bfrho \nb\cdot\bff - \chi'(\bfc)\bfrho |\bff|^2 . 
\end{aligned}
\right.
\end{equation} We write 
\begin{equation*}
\begin{split}
\frac{d}{dt} \frac{1}{2} \int \frac{\chi(\bfc)}{k(\bfc)} |\rd^m\bff|^2 + \frac{1}{\bfrho} |\rd^m\bfrho|^2 = I + II + III + IV, 
\end{split}
\end{equation*} with \begin{equation*}
\begin{split}
I = \frac{1}{2} \int\rd_t (  \frac{\chi(\bfc)}{k(\bfc)}  )|\rd^m\bff|^2 ,
\end{split}
\end{equation*}\begin{equation*}
\begin{split}
II = \frac{1}{2} \int \rd_t( \frac{1}{\bfrho} )|\rd^m\bfrho|^2 ,
\end{split}
\end{equation*}
\begin{equation*}
\begin{split}
III = \int   \frac{\chi(\bfc)}{k(\bfc)} \rd^m\bff \cdot \rd^m \left( -k'(\bfc)\bfrho\bff - k(\bfc)\nb\bfrho  \right),
\end{split}
\end{equation*} and \begin{equation*}
\begin{split}
IV = \int \frac{1}{\bfrho}\rd^m\bfrho \rd^m\left(  - \chi(\bfc)\bff \cdot\nb\bfrho - \chi(\bfc)\bfrho \nb\cdot\bff - \chi'(\bfc)\bfrho |\bff|^2 \right). 
\end{split}
\end{equation*} 
We estimate \begin{equation*}
\begin{split}
|I| = \frac{\gmm}{2} \int \bfc^{\dlt+\gmm-1} \frac{\bfrho}{\bfc^{\dlt}}  \frac{1}{\bfc^{\gmm}} |\rd^m\bff|^2 \le C \nrm{\bfc}_{L^\infty}^{\dlt+\gmm-1} \nrm{ \frac{\bfrho}{\bfc^{\dlt}} }_{L^\infty} \nrm{\bfc}_{\calX^{m+1,\gmm}}
\end{split}
\end{equation*} and \begin{equation*}
\begin{split}
|II| =\left| \frac{1}{2} \int \frac{\nb\cdot(\bfrho\nb\bfc)}{\bfrho} \frac{|\rd^m\bfrho|^2}{\bfrho} \right| \le  C(1 + \nrm{\bfc}_{L^\infty})^b   (1 + \nrm{ \frac{\bfc^\dlt}{\bfrho}}_{L^\infty} )^{a} (1 + \nrm{\bfc}_{ {\calX}^{m+1,\gmm} } + \nrm{\bfrho}_{ {\calX}^{m,1} } )^2 \nrm{\bfrho}_{ {\calX}^{m,1} }
\end{split}
\end{equation*} recalling $\chi = 1$, $k(z)=z^\gmm$ and using \eqref{eq:L-infty}. It remains to handle $III$ and $IV$. In the case of $IV$, it suffices to estimate integrals of the form \begin{equation*}
\begin{split}
\int \frac{1}{\bfrho} \rd^m\bfrho \rd^{\ell} \bff \rd^{m-\ell+1}\bfrho,
\end{split}
\end{equation*} where $0\le \ell \le m+1$. Let us first consider the extreme cases, which are the most troublesome. When $\ell = 0$, with an integration by parts, we have that the corresponding term in $IV$ is given by \begin{equation*}
\begin{split}
\int - \frac{\bff}{\bfrho} \cdot \nb \rd^m \bfrho \,\rd^m\bfrho = \frac{1}{2}\int  \left( \lap\bfc - \frac{\bff\cdot \nb\bfrho }{\bfrho}  \right)  \frac{|\rd^m\bfrho|^2}{\bfrho}. 
\end{split}
\end{equation*}  Recalling the estimate \eqref{eq:L-infty}, we have \begin{equation*}
\begin{split}
\left| \int - \frac{\bff}{\bfrho} \cdot \nb \rd^m \bfrho \,\rd^m\bfrho \right| \le C (\nrm{\frac{\bfc^{\dlt}}{\bfrho}}_{L^\infty}\nrm{\bfc}_{L^\infty}^{\gmm-\dlt} +1 )\nrm{\bfc}_{\calX^{m+1,\gmm}} \nrm{\bfrho}_{\calX^{m,1}}. 
\end{split}
\end{equation*} On the other hand, when $\ell = m+1$, we are left with \begin{equation*}
\begin{split}
-\int \rd^m\bfrho \nb\cdot \rd^m\bff.
\end{split}
\end{equation*} The case $0<\ell<m+1$ is easily bounded by Gagliardo--Nirenberg--Sobolev inequality: we take \begin{equation*}
\begin{split}
\left| \int \frac{1}{\bfrho} \rd^m\bfrho \rd^{\ell} \bff \rd^{m-\ell+1}\bfrho \right| \le C\nrm{\bfrho}_{\calX^{m,1}} \nrm{ \rd^{\ell} \bff  }_{L^{2p^*}} \nrm{ \bfrho^{-\frac{1}{2}} \rd^{m-\ell+1}\bfrho  }_{L^{2p}}
\end{split}
\end{equation*} where \begin{equation*}
\begin{split}
p = \frac{m}{m-\ell+1}, \quad \frac{1}{p} + \frac{1}{p^*} = 1. 
\end{split}
\end{equation*} Then, recalling the proof of Lemma \ref{lem:inequality}, we have \begin{equation*}
\begin{split}
\left(\int \frac{(\rd^{m-\ell+1}\bfrho)^{2p}}{\bfrho^{2p(\frac{1}{2} + \frac{\ell-1}{2m})}} \right)^{\frac{1}{2p}}\le C \nrm{\bfrho}_{\calX^{m,1}}^{\frac{1}{p}}
\end{split} 
\end{equation*} so that \begin{equation*}
\begin{split}
\nrm{ \bfrho^{-\frac{1}{2}} \rd^{m-\ell+1}\bfrho  }_{L^{2p}} \le C \nrm{\bfrho}_{L^\infty}^{\frac{\ell-1}{2m}} \nrm{\bfrho}_{\calX^{m,1}}^{\frac{1}{p}}. 
\end{split}
\end{equation*} Similarly, we can estimate \begin{equation*}
\begin{split}
\nrm{\rd^{\ell}\bff}_{L^{2p^*}} \le C \nrm{\bfc}_{L^\infty}^{b} \nrm{\bfc}_{\calX^{m+1,\gmm}}^{\frac{1}{p^*}}
\end{split}
\end{equation*} with some $b = b(\gmm,m,\ell)>0$. Therefore, \begin{equation*}
\begin{split}
\left| IV + \int \rd^m\bfrho \nb\cdot \rd^m\bff \right| \le   C(1 + \nrm{\bfc}_{L^\infty} + \nrm{\bfrho}_{L^\infty})^b (1+\nrm{\frac{\bfc^{\dlt}}{\bfrho}}_{L^\infty} )(1+\nrm{\bfc}_{\calX^{m+1,\gmm}})(1+ \nrm{\bfrho}_{\calX^{m,1}})\nrm{\bfrho}_{\calX^{m,1}}. 
\end{split}
\end{equation*} with some $b = b(\gmm,m)>0$. Estimating $III$ is similar. Note that \begin{equation*}
\begin{split}
III = -\int \frac{1}{\bfc^{\gmm}} \rd^m\bff \cdot \rd^m \nb (\bfc^\gmm\bfrho)
\end{split}
\end{equation*} and we need to consider \begin{equation*}
\begin{split}
\int \frac{1}{\bfc^{\gmm}} \rd^m\bff \rd^{m+1-\ell} ( \bfc^\gmm  )\rd^{\ell}\bfrho 
\end{split}
\end{equation*} for $0\le\ell\le m+1$. In the case $\ell = m+1$, we simply have \begin{equation*}
\begin{split}
-\int  \rd^m\bff \cdot \rd^m \nb \bfrho. 
\end{split}
\end{equation*} Next, when $\ell \le m$, we first rewrite the corresponding integral as \begin{equation*}
\begin{split}
- \int \frac{\rd^{\ell}\bfrho}{\bfc^{\gmm}} \rd^m \bff \cdot \rd^{m-\ell} (\gmm\bfc^{\gmm-1}\bff). 
\end{split}
\end{equation*} We show how to handle the case $\ell = 0$. Extending the estimate to the case $0<\ell \le m$ is simpler. We then need to consider \begin{equation*}
\begin{split}
-\gmm \int \frac{\bfrho}{\bfc^{\gmm}} \rd^m\bff \cdot  \rd^{j} ( \bfc^{\gmm-1} ) \rd^{m-j}\bff =  -\gmm \int \frac{\bfrho}{\bfc^{\dlt}} \bfc^{\dlt+\gmm-1} \frac{\rd^m\bff}{\bfc^{\frac{\gmm}{2}}} \cdot \frac{\rd^{j} ( \bfc^{\gmm-1} )}{\bfc^{\gmm-1 + \frac{1}{p^*} (\frac{\gmm}{2}-1)}} \frac{\rd^{m-j}\bff }{ \bfc^{1-\frac{1}{p} + \frac{\gmm}{2p}} } . 
\end{split}
\end{equation*} In the above rewriting, we choose \begin{equation*}
\begin{split}
p = \frac{m+1}{m+1-j} >1, \quad \frac{1}{p}+\frac{1}{p^*} = 1. 
\end{split}
\end{equation*} Then, \begin{equation*}
\begin{split}
\left| -\gmm \int \frac{\bfrho}{\bfc^{\gmm}} \rd^m\bff \cdot  \rd^{j} ( \bfc^{\gmm-1} ) \rd^{m-j}\bff \right| & \le C \nrm{\frac{\bfrho}{\bfc^{\dlt}}}_{L^\infty} \nrm{\bfc}_{L^\infty}^{\dlt+\gmm-1} \nrm{ \frac{\rd^{j} ( \bfc^{\gmm-1} )}{\bfc^{\gmm-1 + \frac{1}{p^*} (\frac{\gmm}{2}-1)}} }_{L^{2p^*}} \nrm{ \frac{\rd^{m-j}\bff }{ \bfc^{1-\frac{1}{p} + \frac{\gmm}{2p}} } }_{L^{2p}} \\
& \le C\nrm{\frac{\bfrho}{\bfc^{\dlt}}}_{L^\infty} \nrm{\bfc}_{L^\infty}^{\dlt+\gmm-1} \nrm{\bfc}_{\calX^{m+1,\gmm}}
\end{split}
\end{equation*} where we have used that \begin{equation*}
\begin{split}
\nrm{ \frac{\rd^{m-j}\bff }{ \bfc^{1-\frac{1}{p} + \frac{\gmm}{2p}} } }_{L^{2p}}  \le C\nrm{\bfc}_{\calX^{m+1,\gmm}}^{\frac{1}{p}}
\end{split}
\end{equation*} and \begin{equation*}
\begin{split}
\nrm{ \frac{\rd^{j} ( \bfc^{\gmm-1} )}{\bfc^{\gmm-1 + \frac{1}{p^*} (\frac{\gmm}{2}-1)}} }_{L^{2p^*}} \le C \nrm{\bfc}_{\calX^{m+1,\gmm}}^{\frac{1}{p^*}}. 
\end{split}
\end{equation*} This gives \begin{equation*}
\begin{split}
\left| III + \int  \rd^m\bff \cdot \rd^m \nb \bfrho \right| \le C \nrm{\frac{\bfrho}{\bfc^{\dlt}}}_{L^\infty} \nrm{\bfc}_{L^\infty}^{\dlt+\gmm-1} \nrm{\bfc}_{\calX^{m+1,\gmm}}. 
\end{split}
\end{equation*} Collecting the estimates, \begin{equation}\label{eq:calX-m-1}
\begin{split}
\left| I + II + III + IV \right| \le C  (1 + \nrm{\bfc}_{L^\infty} + \nrm{\bfrho}_{L^\infty})^b (1+\nrm{\frac{\bfc^{\dlt}}{\bfrho}}_{L^\infty} +\nrm{\frac{\bfrho}{\bfc^{\dlt}}}_{L^\infty} )(1+\nrm{\bfc}_{\calX^{m+1,\gmm}} + \nrm{\bfrho}_{\calX^{m,1}})^3. 
\end{split}
\end{equation}

\medskip

\noindent \textit{(iii) $\calX^m$--estimates, part 2}: We now need to estimate \begin{equation*}
\begin{split}
\int \frac{|\nb\bfc|^{2(m+1)}}{\bfc^{2(m+1)-2+\gmm}}, \quad \int \frac{|\nb\bfrho|^{2m}}{\bfrho^{2m-1}}. 
\end{split}
\end{equation*} We begin with writing \begin{equation*}
\begin{split}
\int \frac{|\nb\bfc|^{2(m+1)}}{\bfc^{2(m+1)-2+\gmm}} = \int \left|\frac{2(m+1)}{2-\gmm}\nb(\bfc^{\frac{2-\gmm}{2(m+1)}} )\right|^{2(m+1)} = C_{\gmm,m} \nrm{\nb (\bfc^{\alp}) }_{L^{2(m+1)}}^{2(m+1)},\quad \alp = \frac{2-\gmm}{2(m+1)}.
\end{split}
\end{equation*} Then, from \begin{equation*}
\begin{split}
\rd_t \bfc^{\alp} = -\alp \bfc^{\alp+\gmm-1} \bfrho ,
\end{split}
\end{equation*}we have \begin{equation*}
\begin{split}
\rd_t \nb (\bfc^{\alp}) = - \alp \bfc^{\gmm+\dlt-1} \frac{\bfrho}{\bfc^{\dlt}} \nb (\bfc^{\alp}) - \alp \bfc^{\alp } \nb(\bfrho \bfc^{\gmm-1}).
\end{split}
\end{equation*} We then write 
\begin{equation*}
\begin{split}
 - \alp \bfc^{\alp } \nb(\bfrho \bfc^{\gmm-1}) = -(\gmm-1) \bfc^{\gmm+\dlt-1} \frac{\bfrho}{\bfc^{\dlt}} \nb(\bfc^\alp) - \alp \bfc^{\alp+\gmm-1} \nb\bfrho 
\end{split}
\end{equation*} and furthermore \begin{equation*}
\begin{split}
\nrm{  \bfc^{\alp+\gmm-1} \nb\bfrho  }_{L^{2(m+1)}}^{2(m+1)} = \int \bfc^{2-\gmm} \bfc^{2(\gmm-1)(m+1)} \bfc^{\dlt(2m+1-\frac{1}{k})} \frac{\bfrho^{2m+1-\frac{1}{k}}}{\bfc^{\dlt(2m+1-\frac{1}{k})} } \frac{|\nb\bfrho|^{2m}}{\bfrho^{2m-1}} \frac{|\nb\bfrho|^2}{\bfrho^{2-\frac{1}{k}}}, \quad k = m- \lfloor\frac{d}{2}\rfloor - 1\ge 1. 
\end{split}
\end{equation*} We have that \begin{equation*}
\begin{split}
2-\gmm + 2(\gmm-1)(m+1) + \dlt(2m+1-\frac{1}{k}) \ge (\gmm+\dlt-1)(2m+1-\frac{1}{k}) + (2-\gmm) + (\gmm-1)(1+\frac{1}{k})
\end{split}
\end{equation*} and since $\gmm+\dlt-1\ge0$, $(2-\gmm) + (\gmm-1)(1+\frac{1}{k})\ge 0$, \begin{equation*}
\begin{split}
\nrm{  \bfc^{\alp+\gmm-1} \nb\bfrho  }_{L^{2(m+1)}}^{2(m+1)} \le C \nrm{\bfc}_{L^\infty}^{b} \nrm{\frac{\bfrho}{\bfc^\dlt}}_{L^\infty}^{2m+1-\frac{1}{k}} \nrm{\bfrho}_{\calX^{m,1}}^2.
\end{split}
\end{equation*} Returning to the equation for $\rd_t \nb (\bfc^\alp)$ and taking the $L^{2(m+1)}$-norm of both sides, \begin{equation}\label{eq:apriori4}
\begin{split}
\frac{d}{dt} \nrm{  \nb (\bfc^\alp) }_{L^{2(m+1)}} \le C(1 + \nrm{\bfc}_{L^\infty} + \nrm{\frac{\bfrho}{\bfc^\dlt}}_{L^\infty})^b (  \nrm{  \nb (\bfc^\alp) }_{L^{2(m+1)}} + \nrm{\bfrho}_{\calX^{m,1}}^{\frac{m}{m+1}} )
\end{split}
\end{equation} for some $b = b(m,\gmm,\dlt)\ge 0$. Next, with $\beta = \frac{1}{2m}$, we write
\begin{equation*}
\begin{split}
\rd_t \nb(\bfrho^\beta) = -(1+\beta) \nb(\bfrho^\beta)\lap\bfc - \nb^2 (\bfrho^\beta)\cdot\nb\bfc . 
\end{split}
\end{equation*} With an integration by parts, we estimate \begin{equation}\label{eq:apriori5}
\begin{split}
\frac{d}{dt} \nrm{\nb(\bfrho^\beta)}_{L^{2m}}\le C \nrm{\lap\bfc}_{L^\infty}\nrm{\nb(\bfrho^\beta)}_{L^{2m}}\le C\nrm{\bfc}_{\calX^{m+1,\gmm}} \nrm{\nb(\bfrho^\beta)}_{L^{2m}}. 
\end{split}
\end{equation}
Combining \eqref{eq:apriori4} and \eqref{eq:apriori5} with \eqref{eq:calX-m-1}, we obtain that \begin{equation}\label{eq:apriori-3}
\begin{split}
\frac{d}{dt} \left( \nrm{\bfc}_{\dot{\calX}^{m+1,\gmm}} + \nrm{\bfrho}_{\dot{\calX}^{m,1}} \right)  \le  C  (1 + \nrm{\bfc}_{L^\infty} + \nrm{\bfrho}_{L^\infty}+\nrm{\frac{\bfc^{\dlt}}{\bfrho}}_{L^\infty} +\nrm{\frac{\bfrho}{\bfc^{\dlt}}}_{L^\infty} )^b(1+\nrm{\bfc}_{\calX^{m+1,\gmm}} + \nrm{\bfrho}_{\calX^{m,1}})^3. 
\end{split}
\end{equation} Here, $b = b(\gmm,\dlt,m)\ge0$. In the left hand side of \eqref{eq:apriori-3}, $\dot{\calX}^{m+1,\gmm}$ and $\dot{\calX}^{m,1}$ can be replaced with ${\calX}^{m+1,\gmm}$ and ${\calX}^{m,1}$, respectively. 

\medskip

\noindent \textit{(iv) Concluding the a priori estimate}: Using \eqref{eq:apriori-1}, \eqref{eq:apriori-2}, and \eqref{eq:apriori-3}, we conclude that for \begin{equation}\label{eq:X}
\begin{split}
X :=  \nrm{\bfc}_{\dot{\calX}^{m+1,\gmm}} + \nrm{\bfrho}_{\dot{\calX}^{m,1}} + \nrm{\bfc}_{L^\infty} + \nrm{\bfrho}_{L^\infty}+\nrm{\frac{\bfc^{\dlt}}{\bfrho}}_{L^\infty} +\nrm{\frac{\bfrho}{\bfc^{\dlt}}}_{L^\infty} ,
\end{split}
\end{equation} we have the inequality \begin{equation}\label{eq:apriori}
\begin{split}
\frac{d}{dt} X \le C(1+X)^b
\end{split}
\end{equation} for some $b = b(\gmm,\dlt,m)\ge0$. {Assuming that $X$ is smooth in time, we may take $T>0$ depending only on $X(t=0)$ such that any solution to \eqref{eq:apriori} satisfies \begin{equation}\label{eq:apriori-integrated}
	\begin{split}
		\sup_{0\le t \le T} X(t) \le 2X(0). 
	\end{split}
\end{equation}}

\subsubsection{Existence and uniqueness}\label{subsubsec:eandu}

The proof of uniqueness can be done along the lines of the uniqueness proof for Theorem \ref{thm:lwp-away}, by closing an $L^2$ estimate for the difference of two hypothetical solutions. To prove existence, we consider the system \eqref{eq:KS} with initial data $\bfrho_0^{\eps} = \bfrho_0 + \eps$ and $\bfc_0^{\eps} = \bfc_0+\eps$, where $(\bfrho_0,\bfc_0)$ is the given initial data. Since $\bfrho_0^{\eps}$ and $\bfc_0^{\eps}$ are strictly positive and smooth, there is a corresponding local-in-time solution $(\bfrho^{\eps},\bfc^{\eps})$ to \eqref{eq:KS} guaranteed by Theorem \ref{thm:lwp-away}. {We can then define $X^\eps(t)$ as in \eqref{eq:X}, simply replacing $\bfc$ and $\bfrho$ with $\bfc^\eps$ and $\bfrho^\eps$; \begin{equation*}
		\begin{split}
			X^{\eps}(t) :=  \left(\nrm{\bfc^{\eps}}_{\dot{\calX}^{m+1,\gmm}} + \nrm{\bfrho^{\eps}}_{\dot{\calX}^{m,1}} + \nrm{\bfc^{\eps}}_{L^\infty} + \nrm{\bfrho^{\eps}}_{L^\infty}
			+\nrm{\frac{(\bfc^{\eps})^{\dlt}}{\bfrho^{\eps}}}_{L^\infty} +\nrm{\frac{\bfrho^{\eps}}{(\bfc^{\eps})^{\dlt}}}_{L^\infty} \right)(t).
		\end{split}
	\end{equation*} Then, it is not difficult to see that for each $\eps>0$, $X^{\eps}$ is $C^1$--in $t$ and the a priori estimate \eqref{eq:apriori} is satisfied for $X^{\eps}$ with a (possibly larger) constant $C>0$ which is uniformly bounded for $0<\eps\le1$. Furthermore, note that $X^{\eps}_0$ is uniformly bounded for $0\le \eps\le 1$ and is convergent to $X_0$ as $\eps\to 0$. Therefore, for all small $\eps>0$, there is a uniform time interval of existence $[0,T]$ with some $T>0$ for the solution  $(\bfrho^{\eps},\bfc^{\eps})$, on which we have \begin{equation*}
\begin{split}
\sup_{0\le t \le T} X^\eps(t) \le 2X^\eps(0) \le 4X(0). 
\end{split}
\end{equation*}} On the other hand, note that the quantity $X^\eps$ controls the Sobolev norm $\nrm{\bfc^{\eps}}_{H^{m+1}} + \nrm{\bfrho^{\eps}}_{H^{m}}$, and therefore we can pass to a weakly convergent subsequence $(\bfrho^{\eps},\bfc^{\eps}) \rightarrow (\bfrho,\bfc)$ in $H^{m}\times H^{m+1}$, {which in particular implies uniform pointwise convergence}. Therefore, it is straightforward to verify that the limit $(\bfrho,\bfc)$ solves \eqref{eq:KS} with the given initial data and satisfies the {estimate \eqref{eq:apriori-integrated}}. This finishes the proof. 

\subsection{Well-posedness and singularity formation with quadratic vanishing}\label{subsec:v2}

In this section, we investigate the dynamics of \eqref{eq:KSF} when the initial data vanishes \textit{quadratically} at some point in the domain. This yields another version of Theorem \ref{MainThm3}, which is stated as Theorem \ref{thm:quadratic} below. As we have explained earlier, Theorem \ref{thm:lwp-rho-away} is unable to handle initial data which vanishes with low order. Let us begin with some discussion which shows that the \eqref{eq:KSF} system could be still wellposed in that case. To avoid complications, we start by considering the simplest possible setting: assume no velocity field ($\bfu = 0$) and one-dimensional domain. Then, we have the following system 
\begin{equation}\label{eq:KS1D}
\left\{
\begin{aligned}
\rd_t \bfrho & = -\rd_x ( \bfrho \rd_x \bfc), \\
\rd_t \bfc & =   - \bfc\bfrho .
\end{aligned}
\right.
\end{equation}  We now consider the case where $\bfrho_0$ vanishes at some point $x_0$. We further assume that the second derivative of $\bfrho_0$ at $x_0$ is nonzero: that is, \begin{equation}\label{eq:van-assump-rho}
\begin{split}
\bfrho_0(x) \ge a_0(x-x_0)^2, \quad |x-x_0| < \delta_0,
\end{split}
\end{equation} for some $a_0 >0$, $x_0$, and $\dlt_0>0$. On the other hand, we keep the assumption that $\bfc_0$ is bounded away from zero. Assuming in addition that  \begin{equation}\label{eq:van-assump-c}
\begin{split}
\rd_x\bfc_0(x_0) = 0,
\end{split}
\end{equation} we have from \eqref{eq:KS1D} that the vanishing point of $\bfrho$ does not change with time. This may be viewed as the simplest scenario in which the initial data touch zero. 

\subsubsection*{Linearization against explicit quadratic solutions}
To gain some insight on whether the system \eqref{eq:KS1D} could be locally well-posed in the setup described above, we again take the linearization approach in the regime $|x|\ll1$: taking the approximate solution $(\bar{\bfc}, \bar{\bfrho}) = (1-C(t)x^2, R(t)x^2)$ with $C(0) = R(0) = 1$, writing $\bfrho = \bar{\bfrho} + \tilde{\bfrho}$, $\bfc = \bar{\bfc} + \tilde{\bfc}$, and removing terms that are quadratic in the  perturbation gives \begin{equation}  \label{eq:lin-quad-prim}
\left\{
\begin{aligned} 
&\rd_t \tilde{\bfc} = - R(t)x^2 \tilde{\bfc} - (1 - C(t)x^2)\tilde{\bfrho}, \\
&\rd_t \tilde{\bfrho} =   2C(t)\tilde{\bfrho} + 2C(t)x\rd_x\tilde{\bfrho} - 2xR(t)\rd_x\tilde{\bfc} - R(t)x^2 \rd_{xx}\tilde{\bfc}. 
\end{aligned}
\right.
\end{equation} Then, formally taking $0 < t \ll 1$ and $|x| \ll 1$, we reduce the above into  
\begin{equation}\label{eq:lin-quad}
\left\{ 
\begin{aligned}
\rd_t \tilde{\bfc} &=-\tilde{\bfrho} ,\\
\rd_t \tilde{\bfrho} &= -\rd_x (x^2\rd_x\tilde{\bfc}) . 
\end{aligned}
\right.
\end{equation} We have even removed $ 2C(t)\tilde{\bfrho} + 2C(t)x\rd_x\tilde{\bfrho}$ in the equation for $\tilde{\bfrho}$, which is not important in terms of local well-posedness. Note that for \eqref{eq:lin-quad}, \begin{equation*}
\begin{split}
\frac{1}{2}\frac{d}{dt} \left( \nrm{\tilde{\bfrho}}_{L^2}^2 + \nrm{x\rd_x\tilde{\bfc}}_{L^2}^2 \right) = 0. 
\end{split}
\end{equation*}  
This suggests local well-posedness again in this case, for $\trho, \tc \in H^\infty$. Indeed, $x$-weighted Sobolev norms control the usual Sobolev norm: assume that we have for all $m$, $\nrm{x\rd_x^m\bfc}_{L^2} \lesssim_m 1$. 
Then from \begin{equation*}
\begin{split}
\nrm{x\rd_x \bfc}_{L^2}^2 & =  -\int 2x \bfc\rd_x \bfc - \int x^2 \bfc \rd_{xx}\bfc \ge  \int \bfc^2 - \nrm{x\bfc}_{L^2}\nrm{x\rd_{xx}\bfc}_{L^2},
\end{split}
\end{equation*} we have that \begin{equation*}
\begin{split}
\nrm{\bfc}_{L^2}^2 \le \nrm{x\bfc}_{L^2}\nrm{x\rd_{xx}\bfc}_{L^2} + \nrm{x\rd_x\bfc}_{L^2}^2 ,
\end{split}
\end{equation*} and since one can replace $\bfc$ with $\rd_x^m\bfc$ throughout as well, we conclude that $\nrm{\rd_x^m\bfc}_{L^2} \lesssim_m 1$. 

\subsubsection*{Singularity formation}
Plugging in the ansatz \begin{equation}\label{eq:ode-sol}
\begin{split}
\bfc(t,x) = 1 - C(t)x^2 + O(|x|^4), \quad \bfrho(t,x) = R(t)x^2 + O(|x|^4)
\end{split}
\end{equation} with $C, R \ge 0$ to \eqref{eq:KS1D} gives the system of ODEs \begin{equation}\label{eq:ode}
\left\{ 
\begin{aligned}
\dot{C}(t) &= R(t),\\
\dot{R}(t) &= 6C(t)R(t)
\end{aligned}
\right.
\end{equation} It is clear that $C, R$ blows up in finite time once $R_0, C_0 > 0$. Indeed, for any $C^2$-initial data $(\bfc_0,\bfrho_0)$ satisfying \begin{equation*}
\begin{split}
\bfc_0(0) = 1, \quad {\rd_{xx}}\bfc_0(0) =  {-}2C_0, \quad \bfrho_0(0) = 0, \quad {\rd_{xx}}\bfrho_0(0) = 2R_0,
\end{split}
\end{equation*} it can be shown that any $C^2$ solution must blow up in finite time since \begin{equation*}
\begin{split}
C(t) := {-}\frac{1}{2}{\rd_{xx}}\bfc(t,0),\quad R(t):= \frac{1}{2}{\rd_{xx}}\bfrho(t,0)  
\end{split}
\end{equation*} solves the ODE system \eqref{eq:ode}. To rigorously conclude finite-time singularity formation, we need to establish local in time existence and uniqueness of smooth (at least $C^2$) solutions to \eqref{eq:KS1D} with initial data satisfying \eqref{eq:van-assump-rho} and \eqref{eq:van-assump-c}.

\begin{remark}
	We note that the above singularity formation result is similar to the one for the so-called Serre--Green--Naghdi equations obtained in the recent work \cite{BB}. Indeed, their result can be interpreted as a singularity formation for the (KS) system with $k(\bfc)=\bfc$ and $\chi(\bfc)=\bfc^{-1}$ (private communication with R. Granero-Belinch\'{o}n). 
\end{remark}

\subsubsection*{Local well-posedness and singularity formation}

% Specify the domain
We consider the system \eqref{eq:KS} in some $d$-dimensional domain $\Omega$. One may take either $\Omega = \bbT^k \times \bbR^{d-k}$ or some open set in $\bbR^d$ with smooth boundary. In the latter case, we do not impose any boundary conditions on $\bfrho$ and $\bfc$. 

\begin{theorem}\label{thm:quadratic}
	Assume that the initial data satisfies \begin{enumerate}
		\item (regularity) We have $\bfrho_0, \bfc_0 \in H^{\infty}(\Omega)$.
		\item (vanishing) There exists $x_0 \in \Omega$ such that \begin{equation}\label{eq:vanishing} 
		\begin{split}
		& \bfrho_0(x_0) = 0, \quad \nabla\bfrho_0(x_0) = {\bf 0}, \quad \nabla\bfc_0(x_0) = {\bf 0}.
		\end{split}
		\end{equation}
		\item (non-degeneracy) For some constants $\underline{a}_0, \underline{r}_0, \underline{c}_0, \underline{\dlt}_0 > 0$, we have \begin{equation}\label{eq:lowerbound-rho}
		\begin{split}
		\bfrho_0(x) \ge \begin{cases}
		\underline{a}_0|x-x_0|^2 , \quad & |x-x_0| < \dlt_0 \\
		\underbar{r}_0, \quad & |x-x_0| \ge \dlt_0 
		\end{cases},
		\end{split}
		\end{equation} and \begin{equation}\label{eq:lowerbound-c}
		\begin{split}
		\bfc_0(x) \ge \underline{c}_0 . 
		\end{split}
		\end{equation} 
	\end{enumerate} Then, there exist $T > 0$ and a unique solution $(\bfrho,\bfc)$ to \eqref{eq:KS} with initial data $(\bfrho_0,\bfc_0)$ satisfying $\bfrho \in C^0([0,T); H^\infty)$, $\bfc \in C^0([0,T); H^{\infty})$, the vanishing conditions \eqref{eq:vanishing} for each $0<t<T$, and the lower bounds \eqref{eq:lowerbound-rho}--\eqref{eq:lowerbound-c} with time-dependent positive constants $\underline{a}(t), \underline{r}(t), \underline{c}(t), \underline{\dlt}(t)$.

	Moreover, there exists a set of initial data satisfying the above which blows up in finite time: to be more precise, there exists some $T^*>0$ such that the unique solution blows up in $W^{2,\infty}$ \begin{equation*} 
	\begin{split}
	& {\limsup}_{t \rightarrow T^*}\left( \nrm{\bfrho(t)}_{W^{2,\infty}} + \nrm{\bfc(t)}_{W^{2,\infty}} \right) = \infty. 
	\end{split}
	\end{equation*} 
\end{theorem}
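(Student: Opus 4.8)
The plan is to establish Theorem \ref{thm:quadratic} by two largely independent arguments: a local-in-time theory for \eqref{eq:KS} in a weighted Sobolev class adapted to the quadratic degeneracy of $\bfrho$ at $x_0$, and an \emph{exact} finite-dimensional reduction at $x_0$ producing finite-time blow-up. For concreteness take $\chi=1$, $k(z)=z$, $\Omega=\bbT^d$ (the general smooth $\chi,k$ and the non-compact cases are handled as in Theorems \ref{thm:lwp-away}--\ref{thm:lwp-rho-away}, the latter with the decay-at-infinity caveat of the remark following Theorem \ref{thm:lwp-away}). Two structural facts drive everything. First, since \eqref{eq:KS} has no transport in the $\bfc$-equation, $\bfc$ is recovered from $\bfrho$ by integrating a pointwise linear ODE: $\bfc(t,\cdot)=\bfc_0\exp(-\int_0^t\bfrho(s,\cdot)\,ds)$. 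Second, in the time derivative of the modified energy from the introduction, $\int \bfc\,|\rd^m\bfrho|^2+\bfrho\,|\rd^{m+1}\bfc|^2$, the top-order terms cancel after one integration by parts. The natural weight is $\sqrt{\bfrho}$ on derivatives of $\bfc$, consistent with the conserved quantity $\nrm{\tld\bfrho}_{L^2}^2+\nrm{x\rd_x\tld\bfc}_{L^2}^2$ of the linearization \eqref{eq:lin-quad} and with $\bfrho\simeq|x-x_0|^2$ near $x_0$.

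\textbf{Step 1 (the geometry is preserved).} Writing the $\bfrho$-equation as $\rd_t\bfrho+\nb\bfc\cdot\nb\bfrho=-\bfrho\lap\bfc$, let $X(t)$ be the characteristic of $\nb\bfc$ issued from $x_0$. By \eqref{eq:vanishing}, the triple $(\bfrho,\nb\bfrho,\nb\bfc)(t,X(t))$ satisfies a linear ODE system whose coefficients involve $\lap\bfc,\nb^2\bfc$ and lower derivatives along $X(t)$ and whose right-hand side vanishes whenever this triple does; hence it stays $\equiv 0$, so $\nb\bfc(t,X(t))=0$ and $X(t)\equiv x_0$. Thus \eqref{eq:vanishing} propagates, which gives $|\nb\bfc|+|\nb\bfrho|\lesssim(\nrm{\bfc}_{W^{2,\infty}}+\nrm{\bfrho}_{W^{2,\infty}})\,|x-x_0|$ near $x_0$; combining $\bfrho(t,\Phi_t(a))=\bfrho_0(a)\exp(-\int_0^t\lap\bfc)$ (with $\Phi_t$ the bi-Lipschitz, $x_0$-fixing flow of $\nb\bfc$) with \eqref{eq:lowerbound-rho} yields $\underline a(t)|x-x_0|^2\le \bfrho(t,x)\le \overline a(t)|x-x_0|^2$ near $x_0$ and $\bfrho(t,x)\ge \underline r(t)>0$ away from it, with positive constants defined as long as $\nrm{\bfc}_{W^{2,\infty}}$ stays finite; also $\bfc(t,\cdot)\ge \underline c_0 e^{-\int_0^t\nrm{\bfrho}_{L^\infty}}>0$ and $\bfc(t,x_0)\equiv\bfc_0(x_0)$. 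In particular the ratios $\nb\bfrho/\sqrt\bfrho$ and $\nb\bfc/\sqrt\bfrho$ stay in $L^\infty$.

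\textbf{Step 2 (weighted a priori estimates, existence, uniqueness).} For each $m$ I run the energy estimate for $E_m:=\int \bfc\,|\rd^m\bfrho|^2+\bfrho\,|\rd^m\bff|^2$, $\bff=\nb\bfc$. Using the first-order form \eqref{eq:KS-1st}, the two genuinely top-order contributions $-2\int\bfc\bfrho\,\rd^m\bfrho\,\nb\!\cdot\!\rd^m\bff$ and $-2\int\bfc\bfrho\,\rd^m\bff\!\cdot\!\nb\rd^m\bfrho$ combine, after integrating by parts, into $2\int\nb(\bfc\bfrho)\!\cdot\!\rd^m\bfrho\,\rd^m\bff$, which is lower order; all remaining terms are bounded using (i) the ratios of Step 1 to absorb negative powers of $\bfrho$ into the weights (e.g. $\rd_t\bfrho/\bfrho=\bff\!\cdot\!\nb\bfrho/\bfrho+\lap\bfc\in L^\infty$, $\nb(\bfc\bfrho)/\sqrt\bfrho\in L^\infty$), and (ii) weighted Gagliardo--Nirenberg--Sobolev interpolation of intermediate-order factors against the fixed power weight $|x-x_0|^2\simeq\bfrho$ (Lemma \ref{lem:key}/Lemma \ref{lem:inequality} and classical power-weight GNS inequalities apply here precisely because $\bfrho\simeq|x-x_0|^2$). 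This yields $\frac{d}{dt}X\lesssim (1+X)^b$ for a fixed low-order quantity $X$ (built from $E_{m_0}$ with $m_0>2+\tfrac d2$ and the quantities of Step 1, controlling the $\bfc$-norms through $\nrm{\bfc}_{W^{2,\infty}}\lesssim\nrm{\bfc_0}_{W^{2,\infty}}e^{C\int_0^t\nrm{\bfrho}_{H^{m_0}}}$), and then $\frac{d}{dt}E_m\lesssim P(X)\,E_m$ for every $m$ on a time interval depending only on the data; hence $\bfrho\in L^\infty_tH^\infty$, and $\bfc\in L^\infty_tH^\infty$ follows \emph{a posteriori} from $\bfc=\bfc_0\exp(-\int_0^t\bfrho)$ since $H^\infty(\bbT^d)$ is an algebra. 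The same estimates give the continuation criterion: the $H^\infty$ solution persists as long as $\nrm{\bfc}_{W^{2,\infty}}+\nrm{\bfrho}_{W^{1,\infty}}+\nrm{|x-x_0|^2/\bfrho}_{L^\infty}$ stays finite. Existence is obtained by an iteration respecting this structure: given $\bfrho^{(n)}$, set $\bfc^{(n)}=\bfc_0\exp(-\int_0^t\bfrho^{(n)})$ and solve $\rd_t\bfrho^{(n+1)}+\nb\bfc^{(n)}\!\cdot\!\nb\bfrho^{(n+1)}=-\bfrho^{(n+1)}\lap\bfc^{(n)}$ along characteristics, noting $\nb\bfc^{(n)}(x_0)=0$ so \eqref{eq:vanishing} and the quadratic comparability survive each step, with uniform bounds from the estimate above and convergence in a lower norm (alternatively, vanishing viscosity as in the proof of Theorem \ref{thm:lwp-away}); uniqueness is a weighted $L^2$ estimate for $\int|\tld\bfrho|^2+\bfrho_2|\nb\tld\bfc|^2+|\tld\bfc|^2$, closed by the same cancellation and ratio bounds as in Theorem \ref{thm:lwp-away}.

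\textbf{Step 3 (ODE reduction and blow-up).} Choose the data radial about $x_0$ (working on $\bbR^d$, or localizing near $x_0$ using the finite-speed structure of \eqref{eq:KS}); by uniqueness the solution is radial near $x_0$, so $\bfc,\bfrho$ are smooth functions of $|x-x_0|^2$ with the expansions \eqref{eq:ode-sol}. Set $C(t):=-\tfrac12(e\!\cdot\!\nb)^2\bfc(t,x_0)$, $R(t):=\tfrac12(e\!\cdot\!\nb)^2\bfrho(t,x_0)$ (independent of the unit vector $e$), so $|C(t)|\le\tfrac12\nrm{\bfc(t)}_{W^{2,\infty}}$ and $|R(t)|\le\tfrac12\nrm{\bfrho(t)}_{W^{2,\infty}}$. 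Evaluating the equations of \eqref{eq:KS} and their second directional derivatives at $x_0$ and using the propagated relations $\nb\bfc(x_0)=\bfrho(x_0)=\nb\bfrho(x_0)=0$ and $\bfc(x_0)\equiv1$ (normalizing by the scaling $(\bfc,\bfrho,t)\mapsto(\bfc/c_*,\bfrho/c_*,c_*t)$), all but finitely many terms drop and one gets the closed system $\dot C=R$, $\dot R=(2d+4)CR$, which is \eqref{eq:ode} for $d=1$. For $C_0,R_0>0$ this gives $R=R_0+(d+2)(C^2-C_0^2)$, hence $\dot C=(d+2)C^2+(R_0-(d+2)C_0^2)$ with $C$ increasing and $C_0^2>C_0^2-R_0/(d+2)$, so $C\to+\infty$ at some finite $T^{*}_{\mathrm{ode}}$. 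Picking admissible data with $C_0,R_0>0$ --- e.g. $\bfc_0=1-C_0|x-x_0|^2$ near $x_0$, smoothly capped so $\bfc_0\in H^\infty$ and $\bfc_0\ge\underline c_0>0$, and $\bfrho_0=R_0|x-x_0|^2$ near $x_0$, smoothly extended to satisfy \eqref{eq:lowerbound-rho}--\eqref{eq:lowerbound-c} --- the bound $C(t)\le\tfrac12\nrm{\bfc(t)}_{W^{2,\infty}}$ together with the continuation criterion of Step 2 forces $\limsup_{t\to T^{*}}(\nrm{\bfrho(t)}_{W^{2,\infty}}+\nrm{\bfc(t)}_{W^{2,\infty}})=\infty$ for some $T^{*}\le T^{*}_{\mathrm{ode}}<\infty$.

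\textbf{Main obstacle.} The crux is Step 2. Since the energy controls only the \emph{degenerate} quantity $\int\bfrho\,|\rd^{m+1}\bfc|^2$, no unweighted Sobolev norm of $\bfc$ is directly available, and the estimate closes only if the comparability $\bfrho\simeq|x-x_0|^2$, the ratio bounds $\nb\bfrho/\sqrt\bfrho,\ \nb\bfc/\sqrt\bfrho\in L^\infty$, and the low-order norm $\nrm{\bfc}_{W^{2,\infty}}$ are propagated \emph{simultaneously} with the high-order energies, since each feeds into the others; the intermediate-order nonlinear terms must then be tamed by weighted GNS against the frozen weight $|x-x_0|^2$. The two devices that make this work are the a posteriori recovery $\bfc=\bfc_0\exp(-\int\bfrho)$ (which upgrades $\bfrho\in H^\infty$ to $\bfc\in H^\infty$ for free and controls all $\bfc$-norms) and the weighted inequalities of Lemma \ref{lem:key}. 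Everything else --- propagation of \eqref{eq:vanishing}, the exact ODE reduction, uniqueness --- is comparatively routine.
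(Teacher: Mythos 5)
Your proposal follows essentially the same route as the paper's proof: propagation of the vanishing conditions and of the quadratic lower bound along the flow of $\nb\bfc$, a $\sqrt{\bfc}$/$\sqrt{\bfrho}$-weighted energy estimate exploiting the cancellation of the two top-order terms together with weighted Hardy/Gagliardo--Nirenberg inequalities to recover unweighted norms, and the closed ODE system $\dot C=R$, $\dot R=2(d+2)CR$ for the second derivatives at $x_0$, whose blow-up contradicts the $W^{2,\infty}$ continuation criterion. Your two variants --- recovering $\bfc=\bfc_0\exp(-\int_0^t\bfrho)$ pointwise to control its norms a posteriori, and imposing radial symmetry on the blow-up data instead of propagating diagonal Hessians --- are legitimate simplifications that do not change the structure of the argument.
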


\begin{remark}
	If $\Omega$ is unbounded, we can simply modify the regularity condition as $\nabla\bfrho_0, \nabla\bfc_0 \in H^{\infty}$, $\bfrho_0, \bfc_0 \in L^\infty$. 
\end{remark}

\begin{proof}
	Without loss of generality, we shall assume that ${\bf 0} \in \Omega$ and $x_0 = {\bf 0}$. We introduce $\bff = \nb\bfc$ and write the inviscid \eqref{eq:KS} in the following slightly more convenient form: \begin{equation} \label{eq:KS-2}
	\left\{
	\begin{aligned}
	\rd_t\bfrho & = -\nb \cdot (\bfrho\bff) = - \bfrho (\nb\cdot\bff) - (\bff\cdot\nb)\bfrho, \\
	\rd_t\bff & = -\nb(\bfc\bfrho) = -\bfrho \bff - \bfc \nb\bfrho .
	\end{aligned}
	\right.
	\end{equation} For simplicity we divide the proof into a few steps.
	
	\medskip
	
	\noindent \textit{Step 1: Propagation of degeneracy}
	
	\medskip
	
	\noindent We assume \textit{formally} that there is a smooth solution ($\bfrho \in C^0_tH^m_x, \bff \in C^0_tH^{m}_x$ for some $m>\frac{d}{2}+2$ is sufficient) on some time interval $[0,T)$, and then the vanishing condition propagates in the same time interval. For this we recall \eqref{eq:vanishing} and start with \begin{equation*} 
	\begin{split}
	& \rd_t \bfrho + \bff \cdot \nb \bfrho = -\bfrho \cdot \nb \bff ;
	\end{split}
	\end{equation*}  from this it is clear that \begin{equation*} 
	\begin{split}
	& \frac{d}{dt} \bfrho(t,{\bf 0}) = 0 
	\end{split}
	\end{equation*} if $\bff(t,{\bf 0}) = {\bf 0}$. Next, \begin{equation}\label{eq:rho-deriv}
	\begin{split}
	\rd_t \rd_i \bfrho + \bff \cdot \nb \rd_i\bfrho = - \rd_i\bff \cdot \nb \bfrho  -\rd_i \bfrho \cdot \nb \bff - \bfrho\cdot \nb \rd_i\bff 
	\end{split}
	\end{equation} and we have that \begin{equation*} 
	\begin{split}
	& \frac{d}{dt} \rd_i \bfrho(t,{\bf 0}) = 0 
	\end{split}
	\end{equation*} once $\nb \bfrho(t,{\bf 0}) = {\bf 0 }$, $\bfrho(t,{\bf 0}) = 0$, and $\bff(t,\bf0) = 0$. Finally, from \begin{equation*} 
	\begin{split}
	& \rd_t \bff = - \bfrho \bff - \bfc \nb \bfrho, 
	\end{split}
	\end{equation*} we have \begin{equation*} 
	\begin{split}
	& \frac{d}{dt}\bff(t, {\bf 0}) = {\bf 0}
	\end{split}
	\end{equation*} if $\nb \bfrho(t,{\bf 0}) = {\bf 0 }$. Therefore as long as a smooth solution exists, we have \begin{equation*} 
	\begin{split}
	&	 \bfrho(t,{\bf 0}) = 0, \quad \nabla\bfrho(t,{\bf 0}) = {\bf 0}, \quad \nabla\bfc(t,{\bf 0}) = {\bf 0}.
	\end{split}
	\end{equation*}
	
	\medskip
	
	\noindent \textit{Step 2: Propagation of lower bounds}
	
	\medskip 
	
	\noindent We still assume existence of a smooth solution on some time interval $[0,T)$ and prove propagation in time of lower bounds of the form \eqref{eq:lowerbound-rho}--\eqref{eq:lowerbound-c}. We begin with $\underline{c}_0$: from the equation for $\bfc$, \begin{equation*} 
	\begin{split}
	& \frac{d}{dt} \nrm{\bfc^{-1}}_{L^\infty} \le \nrm{\bfrho}_{L^\infty} \nrm{\bfc^{-1}}_{L^\infty}
	\end{split}
	\end{equation*} and we may simply define $\underline{c}(t) $ as the solution of \begin{equation}\label{eq:def-under-c} 
	\begin{split}
	& \frac{d}{dt} \underline{c}(t)  = - \nrm{\bfrho(t)}_{L^\infty}  \underline{c}(t) , \quad \underline{c}(0) = \underline{c}_0
	\end{split}
	\end{equation} so that $\underline{c}^{-1}(t) \ge  \nrm{\bfc^{-1}}_{L^\infty} $. Next, dividing both sides of the equation for $\bfrho$ by $|x|^2$, we obtain that \begin{equation*} 
	\begin{split}
	& \rd_t \frac{\bfrho}{|x|^2} + (\bff \cdot\nb)  \frac{\bfrho}{|x|^2}  = -(\nb\cdot\bff)  \frac{\bfrho}{|x|^2}  - \frac{2\bff\cdot x}{|x|^2}  \frac{\bfrho}{|x|^2}  . 
	\end{split}
	\end{equation*} This shows that along the characteristics defined by $\bff$ we can propagate the lower bound of $ \frac{\bfrho}{|x|^2} $: define the flow $\phi$ via \begin{equation*} 
	\begin{split}
	& \frac{d}{dt}\phi(t,x) = \bff(t,\phi(t,x)) , \quad \phi(0,x) = x 
	\end{split}
	\end{equation*} and then with $R(t,x):= \frac{\bfrho(t,x)}{|x|^2}$, \begin{equation*} 
	\begin{split}
	& \frac{d}{dt} R(t,\phi(t,x)) = -\left[ \nb\cdot \bff + \frac{2\bff\cdot x}{|x|^2} \right]|_{t,\phi(t,x)} R(t,\phi(t,x))
	\end{split}
	\end{equation*} so that \begin{equation*} 
	\begin{split}
	R(t,\phi(t,x))  & = R_0(x) \exp\left(  \int_0^t -\left[ \nb\cdot \bff + \frac{2\bff\cdot x}{|x|^2} \right]|_{t',\phi(t',x)} dt' \right) \\
	& \ge \frac{\bfrho_0(x)}{|x|^2} \exp\left( {(-2-d)} \int_0^t \nrm{\nb\bff(t')}_{L^\infty} dt' \right). 
	\end{split}
	\end{equation*} Therefore, for $|x| \le \underline{\dlt}_0$, \begin{equation*} 
	\begin{split}
	& \frac{\bfrho(t,\phi(t,x))}{|\phi(t,x)|^2} \ge \underline{a}_0 \exp\left( {(-2-d)} \int_0^t \nrm{\nb\bff(t')}_{L^\infty} dt' \right). 
	\end{split}
	\end{equation*} Now, \textit{defining} $ \underline{\dlt}(t) $ via
	\begin{equation}\label{eq:def-under-dlt} 
	\begin{split}
	& \frac{d}{dt} \underline{\dlt}(t) = - \nrm{\nb\bff(t)}_{L^\infty} \underline{\dlt}(t), \quad \underline{\dlt}(0) = \underline{\dlt}_0 ,
	\end{split}
	\end{equation} we obtain that $\phi(t,B_0(\underline{\dlt}_0)) \supset B_0(\underline{\dlt}(t))$, simply because the characteristics starting on $\partial B_0(\underline{\dlt}(t))$ cannot reach $\partial B_0(\underline{\dlt}_0)$ during $[0,t]$. Hence, once we define \begin{equation}\label{eq:def-under-a} 
	\begin{split}
	&\frac{d}{dt} \underline{a}(t) = - 3\nrm{\nb\bff(t)}_{L^\infty} \underline{a}(t), \quad \underline{a}(0) = \underline{a}_0,
	\end{split}
	\end{equation} we obtain that \begin{equation*} 
	\begin{split}
	& \frac{\bfrho(t,x)}{|x|^2} \ge  \underline{a}(t) , \quad |x| \le  \underline{\dlt}(t).  
	\end{split}
	\end{equation*} Note that $\underline{\dlt}(t)$ can be recovered from $\underline{a}(t)$ and $\underline{\dlt}_0$. Hence in the following we may omit the dependence of estimates in $\underline{\dlt}(t)$. Finally, defining \begin{equation}\label{eq:def-under-r}
	\begin{split}
	\frac{d}{dt} \underline{r}(t) = - 3\nrm{\nb\bff(t)}_{L^\infty} \underline{r}(t), \quad \underline{r}(0) = \underline{r}_0,
	\end{split}
	\end{equation} we have that \begin{equation*} 
	\begin{split}
	&  {\bfrho(t,x)}  \ge  \underline{r}(t) , \quad |x| >  \underline{\dlt}(t).  
	\end{split}
	\end{equation*} We omit the proof, which is straightforward. 
	
	\medskip
	
	\noindent \textit{Step 3: A priori estimates}
	
	\medskip
	
	\noindent The following lemma allows us to identify the weight $\sqrt{\bfrho}$ as $\frac{|x|}{1+|x|}$: \begin{lemma}\label{lem:equiv}
	 {Under the hypothesis in Theorem \ref{thm:quadratic},}
		we have \begin{equation}\label{eq:equiv}
		\begin{split}
		\frac{|x|}{1+|x|} \le ( \underline{a}^{-1}(t) + \underline{r}^{-1}(t)  )^{\frac{1}{2}} \sqrt{\bfrho(t,x)} 
		\end{split}
		\end{equation} and \begin{equation}\label{eq:equiv2}
		\begin{split}
		\sqrt{\bfrho(t,x)}  \le C\nrm{\bfrho}^{ {\frac{1}{2}}}_{W^{2,\infty}} \frac{|x|}{1+|x|}. 
		\end{split}
		\end{equation} 
	\end{lemma}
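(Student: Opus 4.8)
\emph{The plan.} I would prove the two inequalities \eqref{eq:equiv} and \eqref{eq:equiv2} separately; both are short consequences of the structural facts already established in Steps 1 and 2, combined with the two elementary inequalities $\frac{|x|}{1+|x|}\le\min(|x|,1)$ and $\min(|x|,1)\le\frac{2|x|}{1+|x|}$.

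For the lower bound \eqref{eq:equiv}, I would invoke the propagated lower bounds from Step 2. On the inner region $|x|\le\underline{\dlt}(t)$ one has $\bfrho(t,x)\ge\underline{a}(t)|x|^2$, hence $|x|\le\underline{a}(t)^{-1/2}\sqrt{\bfrho(t,x)}$, and on the outer region $|x|>\underline{\dlt}(t)$ one has $\bfrho(t,x)\ge\underline{r}(t)$, hence $1\le\underline{r}(t)^{-1/2}\sqrt{\bfrho(t,x)}$. Combining these with $\frac{|x|}{1+|x|}\le\min(|x|,1)$ and bounding each of $\underline{a}(t)^{-1/2}$ and $\underline{r}(t)^{-1/2}$ by $(\underline{a}^{-1}(t)+\underline{r}^{-1}(t))^{1/2}$ yields \eqref{eq:equiv} in both regions.

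For the upper bound \eqref{eq:equiv2}, the key input is that $\bfrho(t,\cdot)$ vanishes at least quadratically at the origin, which is exactly the content of Step 1: $\bfrho(t,{\bf 0})=0$ and $\nb\bfrho(t,{\bf 0})={\bf 0}$. Taylor's formula with integral remainder then gives
\[
0\le\bfrho(t,x)=\int_0^1 (1-s)\, x^T (\nb^2\bfrho)(t,sx)\, x\,\ud s\lesssim\nrm{\nb^2\bfrho(t)}_{L^\infty}\,|x|^2,
\]
so that $\sqrt{\bfrho(t,x)}\lesssim\nrm{\bfrho(t)}_{W^{2,\infty}}^{1/2}|x|$; together with the trivial bound $\sqrt{\bfrho(t,x)}\le\nrm{\bfrho(t)}_{W^{2,\infty}}^{1/2}$ and $\min(|x|,1)\le\frac{2|x|}{1+|x|}$, this gives \eqref{eq:equiv2}. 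The quantity $\nrm{\bfrho(t)}_{W^{2,\infty}}$ is finite since $\bfrho(t,\cdot)\in H^\infty\hookrightarrow W^{2,\infty}$.

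\emph{On the difficulty.} There is no serious obstacle here; this is a bookkeeping lemma whose sole purpose is to let us trade the solution-dependent weight $\sqrt{\bfrho}$ for the fixed, $t$-independent weight $\frac{|x|}{1+|x|}$ in the weighted Sobolev estimates of Step 3. The one point deserving a little attention is uniformity in $t$: the constants $\underline{a}(t)$, $\underline{r}(t)$ stay bounded away from $0$ and $\nrm{\bfrho(t)}_{W^{2,\infty}}$ stays bounded above on any compact subinterval of $[0,T)$ on which the a priori bound of Step 3 is available, so the equivalence \eqref{eq:equiv}--\eqref{eq:equiv2} holds uniformly on such subintervals and is therefore usable inside the bootstrap argument.
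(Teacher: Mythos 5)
Your proof is correct and follows essentially the same route as the paper: for \eqref{eq:equiv} you split according to the propagated lower bounds of Step 2 (quadratic bound $\bfrho\ge\underline{a}(t)|x|^2$ near the origin, uniform bound $\bfrho\ge\underline{r}(t)$ outside, combined with $\frac{|x|}{1+|x|}\le\min(|x|,1)$), and for \eqref{eq:equiv2} you use the vanishing conditions of Step 1 plus Taylor expansion for $|x|\le1$ and the trivial $L^\infty$ bound for $|x|>1$, which is exactly the paper's (sketched) argument. No gaps; your version merely writes out the Taylor remainder and the $t$-uniformity remark explicitly.
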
 
	\begin{proof}
		We assume $\dlt(t) \le 1$ and consider three regions: (i) $0 \le x < \dlt(t)$, (ii) $\dlt(t) \le x < 1$, (iii) $1 \le x$. (The proof is only simpler in the case $\dlt(t) > 1$.) The inequality is trivial for (i) from $\sqrt{\bfrho(t,x)} \ge \sqrt{\underline{a}_0}|x|$. On the other hand, for (ii) and (iii), \begin{equation*}
		\begin{split}
		\frac{|x|}{1+|x|} < 1 < (\underbar{r}(t))^{-\frac{1}{2}} \sqrt{\bfrho(t,x)} .
		\end{split}
		\end{equation*} The proof of \eqref{eq:equiv2} is trivial; just consider regions $|x| \le 1$ and $|x|>1$ separately and use Taylor's expansion in the first region. 
	\end{proof}
	
	\noindent We now fix some $m$-th order derivative $\rd^m$ with $m > d + 4$ and compute \begin{equation*} 
	\begin{split}
	& \frac{d}{dt} \left( \bfc (\rd^m \bfrho)^2 + \sum_i \bfrho (\rd^m \bff_i)^2 \right) \\
	&\quad = -\bfc \bfrho (\rd^m\bfrho)^2 - 2\bfc \rd^m\bfrho \rd^m( \nb\cdot(\bfrho\bff) ) - \sum_i \nb\cdot(\bfrho\bff) (\rd^m\bff_i)^2 - \sum_i 2\bfrho \rd^m\bff_i \rd^m( \bfrho\bff_i + \bfc\rd_i\bfrho).
	\end{split}
	\end{equation*} We then integrate both sides over $\Omega$; the first and third terms are handled as follows: \begin{equation*} 
	\begin{split}
	& \left| \int -\bfc \bfrho (\rd^m\bfrho)^2  \right| \lesssim \nrm{\bfrho}_{L^\infty} \nrm{\sqrt{\bfc} \nb^m\bfrho}_{L^2}^2,
	\end{split}
	\end{equation*} and \begin{equation*} 
	\begin{split}
	& \left| - \sum_i \nb\cdot(\bfrho\bff) (\rd^m\bff_i)^2  \right| \lesssim \nrm{\nb \bff}_{L^\infty} \nrm{\sqrt{\bfrho} \nb^m\bff}_{L^2}^2 + \nrm{\sqrt{{|(\bff\cdot\nb)\bfrho|}} \nb^m \bff}_{L^2}^2 .
	\end{split}
	\end{equation*} We further note that \begin{equation*} 
	\begin{split}
	 \nrm{\sqrt{|(\bff\cdot\nb)\bfrho|} \nb^m \bff}_{L^2}^2 &\le \nrm{\frac{1+|x|}{|x|} \bff}_{L^\infty} \nrm{\frac{1+|x|}{|x|} \nb\bfrho}_{L^\infty}  \nrm{\frac{|x|}{1+|x|} \nb^m\bff}_{L^2}^2 \\
	&\lesssim  ( \underline{a}^{-1}(t) + \underline{r}^{-1}(t)  )\nrm{\bff}_{W^{1,\infty}} \nrm{\bfrho}_{W^{2,\infty}}  \nrm{\sqrt{\bfrho} \nb^m\bff}_{L^2}^2 
	\end{split}
	\end{equation*} where we have used the pointwise inequality \eqref{eq:equiv}. 
	
	Let us now handle the other two terms; the most difficult terms combine as follows: \begin{equation*} 
	\begin{split}
	& \int -2\bfc\bfrho \rd^m\bfrho  \rd^m(\nb\cdot\bff) -\sum_i \int 2\bfc\bfrho \rd^m\bff_i \rd^m\rd_i\bfrho   =  \int 2 \nabla( \bfrho\bfc) \cdot \rd^m \bff  \rd^m \bfrho .
	\end{split}
	\end{equation*}  In absolute value, this can be bounded by \begin{equation*} 
	\begin{split}
	& \lesssim \nrm{\bff}_{L^\infty} \nrm{\bfrho}_{L^\infty}^{\frac{1}{2}} \nrm{\bfc^{-1}}_{L^\infty}^{\frac{1}{2}} \nrm{\sqrt{\bfc} \rd^m\bfrho}_{L^2} \nrm{\sqrt{\bfrho} \rd^m \bff}_{L^2} + \nrm{\bfc}_{L^\infty}^{\frac{1}{2}} \nrm{\sqrt{\bfc} \rd^m\bfrho }_{L^2} {\nrm{|\nb\bfrho| \rd^m\bff}_{L^2}. }
	\end{split}
	\end{equation*} Observing the bound \begin{equation*} 
	\begin{split}
	& |\nb\bfrho(x)| \le C\nrm{\bfrho}_{W^{2,\infty}} \frac{|x|}{1+|x|},
	\end{split}
	\end{equation*} we conclude the bound \begin{equation*} 
	\begin{split}
	& \left| \int 2 \nabla( \bfrho\bfc) \cdot \rd^m \bff  \rd^m \bfrho  \right| \lesssim   ( \underline{a}^{-1}(t) + \underline{r}^{-1}(t) + \underline{c}^{-1}(t) + 1 )(1 + \nrm{\bfc}_{W^{1,\infty}} + \nrm{\bfrho}_{W^{2,\infty}}  ) \nrm{\sqrt{\bfc} \rd^m\bfrho}_{L^2} \nrm{\sqrt{\bfrho} \rd^m \bff}_{L^2}.
	\end{split}
	\end{equation*} The remaining terms in the fourth term are straightforward to handle: for any {$\frac{m}{2} < k \le m $}, \begin{equation*}
	\begin{split}
	\left| \int 2\bfrho \rd^m\bff \rd^k\bff \rd^{m-k}\bfrho  \right|& \lesssim   \nrm{\rd^{m-k}\bfrho}_{L^\infty} \nrm{\sqrt{\bfrho}\rd^m\bff}_{L^2}\nrm{\sqrt{\bfrho}\rd^k\bff}_{L^2} \\
	&\lesssim  \nrm{ \bfrho}_{H^m} \nrm{\sqrt{\bfrho}\rd^m\bff}_{L^2}\nrm{\sqrt{\bfrho}\rd^k\bff}_{L^2},
	\end{split}
	\end{equation*} and then for {$1 \le  k \le \frac{m}{2}$}, \begin{equation*} 
	\begin{split}
	 \left| \int 2\bfrho \rd^m\bff \rd^k\bff \rd^{m-k}\bfrho  \right|& \lesssim \nrm{\bfrho}_{L^\infty} \nrm{\rd^{m-k}\bfrho}_{L^2} \nrm{\sqrt{\bfrho}\rd^m\bff}_{L^2}\nrm{\sqrt{\bfrho}\rd^k\bff}_{L^\infty} \\
	&\lesssim  \nrm{\bfrho}_{L^\infty}^{\frac{3}{2}} \nrm{\rd^{m-k}\bfrho}_{L^2} \nrm{\sqrt{\bfrho}\rd^m\bff}_{L^2}\nrm{ \bff}_{H^{m-1}} ,
	\end{split}
	\end{equation*}and the case $ k = 0$ can be bounded by \begin{equation*}
	\begin{split}
	\left| \int 2\bfrho \rd^m\bff \, \bff \rd^m\bfrho  \right| \lesssim \nrm{\bfrho}_{L^\infty}^{\frac{1}{2}} \nrm{\bff}_{L^\infty}  \nrm{\sqrt{\bfrho}\rd^m\bff}_{L^2}\nrm{\rd^m\bfrho}_{L^2}. 
	\end{split}
	\end{equation*} It only remains to consider for $1 \le  k \le m $ the following:   \begin{equation*}
	\begin{split}
	\left| \int 2\bfc \, \rd^m\bfrho\, \rd^{m+1-k}\bfrho \, \rd^{k}\bff  \right|. 
	\end{split}
	\end{equation*} When $k = m$, we can bound \begin{equation*}
	\begin{split}
	\lesssim ( \underline{a}^{-1}(t) + \underline{r}^{-1}(t) )^{\frac{1}{2}} \nrm{\bfc}_{L^\infty}^{\frac{1}{2}} \nrm{\sqrt{\bfc}\, \nb^m\bfrho}_{L^2} \nrm{\bfrho}_{W^{2,\infty}}  \nrm{\sqrt{\bfrho}\,\nb^m\bff}_{L^2} .
	\end{split}
	\end{equation*} In the opposite end when $k = 1$, we bound instead as follows: \begin{equation*}
	\begin{split}
	\lesssim \nrm{\bff}_{W^{1,\infty}} \nrm{\sqrt{\bfc}\rd_x^m\bfrho}_{L^2}^{2} .
	\end{split}
	\end{equation*} To treat the case $2 \le  k < m$, we shall prove the following elementary inequality: \begin{equation*}
	\begin{split}
	\nrm{\rd_x^{k}\bff}_{L^2} \lesssim ( \underline{a}^{-1}(t) + \underline{r}^{-1}(t) )^{\frac{1}{2}}  \left(\nrm{\sqrt{\bfrho}\rd_x^k\bff}_{L^2} + \nrm{\sqrt{\bfrho}\rd_x^{k+1}\bff}_{L^2} \right). 
	\end{split}
	\end{equation*} Appealing to Lemma \ref{lem:equiv}, it suffices to establish the following \begin{lemma}
	 {Suppose that $g\in H^1(\mathbb R^d)$. Then,}
		we have \begin{equation}\label{eq:hardy-variant} 
		\begin{split}
		& \nrm{g}_{L^2} \lesssim \nrm{\frac{|x|}{1+|x|} g}_{L^2}+ \nrm{\frac{|x|}{1+|x|} \nb g}_{L^2}.
		\end{split}
		\end{equation}
	\end{lemma}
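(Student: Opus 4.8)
The plan is to establish the weighted Poincar\'e-type inequality \eqref{eq:hardy-variant} by a localized Rellich/Pohozaev-type integration by parts, exploiting that the weight $w(x):=\frac{|x|}{1+|x|}$ is comparable to $1$ on $\{|x|\ge 1\}$ and comparable to $|x|$ on $\{|x|\le 1\}$. In particular the right-hand side of \eqref{eq:hardy-variant} already controls $\nrm{g}_{L^2(\{|x|\ge 1\})}$ (since $w\ge \frac12$ there), so the only genuine task is to bound $\nrm{g}_{L^2(B_1)}$, where $B_r$ denotes the ball of radius $r$ about the origin. The mechanism that makes this possible is that a Hardy-type inequality near the origin costs us one derivative, which is exactly what the term $\nrm{w\nb g}_{L^2}$ supplies (with an extra $|x|$-weight that is harmless on $B_2$).

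First I would fix a cutoff $\chi\in C_c^\infty(B_2)$ with $0\le\chi\le 1$ and $\chi\equiv 1$ on $B_1$, and apply the divergence theorem to the compactly supported vector field $V(x)=\chi(x)\,x$ tested against $|g|^2$. Using $\nb\cdot V = d\,\chi + x\cdot\nb\chi$ one obtains the identity
\[
d\int \chi\,|g|^2\,dx \;=\; -\int (x\cdot\nb\chi)\,|g|^2\,dx \;-\; 2\int \chi\,g\,(x\cdot\nb g)\,dx ,
\]
valid for $g\in H^1(\bbR^d)$ (this is ordinary integration by parts, since $\nb(|g|^2)=2g\nb g\in L^1$; it may be justified by approximating $g$ by $C_c^\infty$ functions in $H^1$). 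The first term on the right is supported in the annulus $\{1\le|x|\le 2\}$, where $|x\cdot\nb\chi|$ is bounded, hence it is $\lesssim \nrm{g}_{L^2(\{1\le|x|\le2\})}^2 \lesssim \nrm{wg}_{L^2}^2$. For the second term I would use the pointwise bound $\chi(x)^{1/2}|x|\le 3\,w(x)$ on all of $\bbR^d$ (trivial for $|x|>2$, and from $w\ge |x|/3$ for $|x|\le 2$), which together with Cauchy--Schwarz gives
\[
\left| 2\int \chi\,g\,(x\cdot\nb g)\,dx \right| \;\le\; 6\left(\int \chi\,|g|^2\,dx\right)^{1/2}\nrm{w\nb g}_{L^2}.
\]

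Combining the two estimates and absorbing the factor $\big(\int\chi|g|^2\big)^{1/2}$ on the right via Young's inequality yields
\[
\int_{B_1}|g|^2\,dx \;\le\; \int \chi\,|g|^2\,dx \;\lesssim\; \nrm{wg}_{L^2}^2 + \nrm{w\nb g}_{L^2}^2,
\]
and adding $\nrm{g}_{L^2(\{|x|\ge 1\})}^2 \le 4\nrm{wg}_{L^2}^2$ gives \eqref{eq:hardy-variant}, with constants depending only on $d$ and with the argument uniform in $d\ge 1$. I do not expect a real obstacle here; the closest thing to a delicate point is the careful bookkeeping of the explicit comparison constants for $w$ near the origin and near infinity (to make sure every term is absorbed into the right-hand side), together with the justification of the integration-by-parts identity for a general $H^1$ function, both of which are routine. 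Feeding this back through Lemma~\ref{lem:equiv} (i.e.\ replacing $w$ by $\sqrt{\bfrho}$ up to the factor $(\underline a^{-1}+\underline r^{-1})^{1/2}$) then closes the interpolation estimate for $\nrm{\rd_x^k\bff}_{L^2}$ needed in the $a$ priori bound.
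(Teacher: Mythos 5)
Your proof is correct and follows essentially the same route as the paper: the far region $|x|\ge 1$ is absorbed directly by the weight, while the region near the origin is controlled by a Hardy-type integration by parts against the radial field $x$, with the cross term absorbed by Young's inequality. Your cutoff-plus-absorption implementation even sidesteps the paper's separate treatment of $d\ge 3$ versus $d=1,2$ (where an extra $M_\eps$-term is needed to fix the sign), but the underlying mechanism is identical.
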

	\begin{proof} The idea is simply to use Hardy's inequality only in the region $|x| \ll 1$. To be precise, we write \begin{equation*} 
		\begin{split}
		& \nrm{g}_{L^2}^2 = \int \frac{1}{1+ |x|^2} |g(x)|^2 + \int \frac{ |x|^2}{1+ |x|^2} |g(x)|^2 
		\end{split}
		\end{equation*} and it suffices to show that the first term on the right hand side is bounded by \eqref{eq:hardy-variant}. For this we write that \begin{equation*} 
		\begin{split}
		& \int \frac{1}{1+ |x|^2} |g(x)|^2 = \sum_i \frac{1}{d} \int \rd_i(x_i) \frac{1}{1+ |x|^2} |g(x)|^2 \\
		&=   \frac{2}{d} \int \frac{|x|^2}{(1+|x|^2)^2} |g(x)|^2 + \sum_i \frac{2}{d} \int \frac{-2x_i}{1+ |x|^2} g\rd_i g . 
		\end{split}
		\end{equation*} We consider separately the cases $d=1,2$, and {$d\ge 3$}. In the latter we have \begin{equation*} 
		\begin{split}
		& \int \left[\frac{1}{1+ |x|^2}  -  \frac{2}{d}   \frac{|x|^2}{(1+|x|^2)^2} \right]|g(x)|^2 =  \sum_i {\frac{1}{d}} \int \frac{-2x_i}{1+ |x|^2} g\rd_i g \\
		&\quad  \le {\eps} \int \frac{1}{1+|x|^2} |g(x)|^2 + C_\eps \int \frac{|x|^2}{1+|x|^2} |\nb g(x)|^2  .
		\end{split}
		\end{equation*} The proof is complete in this case since \begin{equation*} 
		\begin{split}
		& \frac{1}{1+ |x|^2}  -  \frac{2}{d}   \frac{|x|^2}{(1+|x|^2)^2}  \gtrsim \frac{1}{1+|x|^2}. 
		\end{split}
		\end{equation*} For $d = 1, 2$ we can find large $M_\eps$ depending on $\eps>0$ such that \begin{equation*} 
		\begin{split}
		& \frac{1}{1+ |x|^2}  -  \frac{2}{d}   \frac{|x|^2}{(1+|x|^2)^2} +\frac{M_\eps|x|^2}{1+|x|^2} \gtrsim \frac{2\eps}{1+|x|^2}
		\end{split}
		\end{equation*}  for all $|x|$. Then one can similarly conclude the proof. 
	\end{proof}
	
	\noindent Applying the above lemma, we obtain the bound \begin{equation*}
	\begin{split}
	&\left| \int 2\bfc \, \rd^m\bfrho\, \rd^{m+1-k}\bfrho \, \rd^{k}\bff  \right|   \lesssim \nrm{\bfc}_{L^\infty}^{\frac{1}{2}} \nrm{\sqrt{\bfc}\, \rd^m\bfrho}_{L^2} \nrm{\rd^{m+1-k}\bfrho}_{L^\infty}  \nrm{ \rd^k\bff}_{L^2}  \\
	& \lesssim ( \underline{a}^{-1}(t) + \underline{r}^{-1}(t) )^{\frac{1}{2}}    {\nrm{\bfc}_{L^\infty}} \nrm{\sqrt{\bfc}\, \nb^m\bfrho}_{L^2} \nrm{  \bfrho}_{H^m}\left(\nrm{\sqrt{\bfrho}\nb^k\bff}_{L^2} + \nrm{\sqrt{\bfrho}\nb^{k+1}\bff}_{L^2} \right)
	\end{split}
	\end{equation*}  for {$\frac{m}{2} < k < m$. In the case $k \le \frac{m}{2}$}, we can simply estimate \begin{equation*} 
	\begin{split}
	& \left| \int 2\bfc \, \rd^m\bfrho\, \rd^{m+1-k}\bfrho \, \rd^{k}\bff  \right|   \lesssim \nrm{\sqrt{\bfc}\, \nb^m\bfrho}_{L^2}\nrm{\sqrt{\bfc}\, \nb^{m+1-k}\bfrho}_{L^2} \nrm{\bff}_{H^{m-1}}. 
	\end{split}
	\end{equation*} Now we recall the weighted norms \begin{equation*} 
	\begin{split}
	& \nrm{\bfrho}_{Y^m}^2 := \sum_{j=0}^m \nrm{\sqrt{\bfc} \nb^j \bfrho}_{L^2}^2, \quad \nrm{\bff}_{Y^m}^2 := \sum_{j=0}^m \nrm{\sqrt{\bfrho} \nb^j \bff}_{L^2}^2
	\end{split}
	\end{equation*} and use \begin{equation*} 
	\begin{split}
	& \nrm{\bfrho}_{H^m} \lesssim \underline{c}^{-\frac{1}{2}}\nrm{\bfrho}_{Y^m}, \quad \nrm{\bff}_{H^{m-1}} \lesssim ( \underline{a}^{-1}(t) + \underline{r}^{-1}(t) )^{\frac{1}{2}} \nrm{\bff}_{Y^m}. 
	\end{split}
	\end{equation*} 
	
	Collecting all the estimates, \begin{equation*} 
	\begin{split}
	&\left| \frac{d}{dt} \left( \nrm{\bfrho}_{Y^m}^2 + \nrm{\bff}_{Y^m}^2 \right) \right| \lesssim \left( 1 + \underline{a}^{-1} + \underline{r}^{-1} + \underline{c}^{-1} \right)^2 \left( 1 + \nrm{\bfc}_{W^{2,\infty}} + \nrm{\bfrho}_{W^{2,\infty}} \right)^2 \left( \nrm{\bfrho}_{Y^m}^2 + \nrm{\bff}_{Y^m}^2 \right) .
	\end{split}
	\end{equation*} Strictly speaking, we have shown the above estimate for $\nrm{\bfrho}_{\dot{Y}^m}^2 + \nrm{\bff}_{\dot{Y}^m}^2$ in the left hand side but the same estimate can be shown \textit{a fortiori} for lower norms as well. We now define \begin{equation*} 
	\begin{split}
	& Z_m = \nrm{\bfrho}_{Y^m}^2 + \nrm{\bff}_{Y^m}^2 +  1 + \underline{a}^{-1} + \underline{r}^{-1} + \underline{c}^{-1}
	\end{split}
	\end{equation*} and note that \begin{equation*} 
	\begin{split}
	& \frac{d}{dt} Z_m \lesssim (Z_m)^4. 
	\end{split}
	\end{equation*} This shows that formally, there exists some time interval $[0,T)$ in which $Z_m$ remains finite. By refining the estimates suitably (see the proof in the previous section), it can be seen that this time interval does not depend on $m$ as long as $m> d + 4$. Hence on the time interval $[0,T)$, the solution belongs to $H^\infty$. Existence and uniqueness can be proved using standard arguments.

	\medskip
	
	\noindent \textit{Step 5: Blow-up criterion and finite-time singularity formation}
	
	\medskip
	
	\noindent It only remains to prove finite time singularity formation. Before we proceed, we note that \begin{equation*} 
	\begin{split}
	& \sup_{t \in [0,T)} \nrm{\bfrho(t)}_{W^{2,\infty}} + \nrm{\bfc(t)}_{W^{2,\infty}} 
	\end{split}
	\end{equation*} controls blow-up. The proof follows from applying the Gagliardo--Nirenberg--Sobolev inequalities rather than simple product estimates. We consider initial data such that near $x = {\bf 0}$,  \begin{equation*} 
	\begin{split}
	& \bfrho_0(x) = \sum_i R_{i,0} x_i^2 + O(|x|^3), \quad \bfc_0(x) = 1 - \sum_i C_{i,0} x_i^2 + O(|x|^3). 
	\end{split}
	\end{equation*} Assume towards a contradiction that the solution corresponding to the above initial data is global. We then first see that \begin{equation*} 
	\begin{split}
	& \frac{d}{dt} \rd_i\rd_j \bfrho(t,{\bf 0}) = 0 = \frac{d}{dt} \rd_i\rd_j \bfc(t,{\bf 0}) ,\quad \forall i \ne j.  
	\end{split}
	\end{equation*} Hence for all $t\ge 0$, we are guaranteed Taylor expansion of the form \begin{equation*} 
	\begin{split}
	& \bfrho(t,x) = \sum_i R_{i}(t) x_i^2 + O(|x|^3), \quad \bfc(t,x) = 1 - \sum_i C_{i}(t) x_i^2 + O(|x|^3). 
	\end{split}
	\end{equation*} {(Here, we note that the coefficient in $O(|x|^3)$ may grow in time, but it is uniformly bounded in any finite time interval.)} Then we obtain by substitution that \begin{equation}\label{eq:ODE-blowup-multiD}
	\left\{
	\begin{aligned}
	\dot{C}_i(t) & = R_i(t) \\
	\dot{R}_i(t) & = 2R_i(t)( 2C_i(t) + \sum_{j=1}^d C_j(t) ).  
	\end{aligned}
	\right.
	\end{equation} Once we choose $C_i = C > 0$ and $R_i = R > 0$ for some constants $C, R$, the system \eqref{eq:ODE-blowup-multiD} blows up in finite time, which is a contradiction. Therefore there exists some $T^*>0$ such that \begin{equation*} 
	\begin{split}
	& {\limsup}_{t \rightarrow T^*}\left( \nrm{\bfrho(t)}_{W^{2,\infty}} + \nrm{\bfc(t)}_{W^{2,\infty}} \right) = \infty. 
	\end{split}
	\end{equation*} The proof is complete. \end{proof}

\begin{remark}
	We would like to point out that the above local well-posedness and singularity formation results can be generalized in several ways. 
	
	\begin{itemize}
		\item The assumption $\nb\bfc_0(x_0)=0$ in \eqref{eq:vanishing} can be dropped. In that case, the model solution in one spatial dimension is replaced by \begin{equation*}
			\begin{split}
				\bfc(t,x) &= 1 - \ell (x-x(t)) - \frac{C(t)}{2} (x-x(t))^2 + O(|x-x(t)|^3), \\
				\bfrho(t,x) &= \frac{R(t)}{2}(x-x(t))^2 + O(|x-x(t)|^3).
			\end{split}
		\end{equation*} Here, $\ell$ is constant in time, $x(t)$ obeys the equation \begin{equation*}
			\begin{split}
				\dot{x}(t) = \ell R(t) , \quad x(0)=x_0,
			\end{split}
		\end{equation*} and $(C(t),R(t))$ still solves \eqref{eq:ode}. The local well-posedness argument goes through with $x_0$ replaced by $x(t)$. One can consider the full system involving the velocity as well; now the evolution of $x(t)$ will be affected by the velocity as well. 
		
		\item 	When the velocity becomes involved, it is an interesting problem to see whether finite time singularity formation persists: defining $x(t)$ by \begin{equation*}
			\begin{split}
				\dot{x}(t) = \bfu(t,x(t)), \quad x(0)=x_0
			\end{split}
		\end{equation*} and assuming \begin{equation*}
			\begin{split}
				\bfrho_0(x_0) = 0, \quad \nb\bfrho_0(x_0) = 0, \quad \nb\bfc_0(x_0) = 0, 
			\end{split}
		\end{equation*} the system of equations for the second derivatives $\{ \rd_{i}\rd_j \bfc(t,x(t)), \rd_i\rd_j\bfrho(t,x(t)) \}_{1\le i,j\le d}$ is given by \begin{equation}  \label{eq:ode-velocity}
			\left\{
			\begin{aligned} 
				&\frac{d}{dt}  \rd_{i}\rd_j \bfc(t,x(t))   = -\bfc_0(x_0)\rd_i\rd_j\bfrho(t,x(t)) - \rd_i\bfu(t,x(t))\cdot\nb\rd_j\bfc(t,x(t)) - \rd_j\bfu(t,x(t))\cdot\nb\rd_i\bfc(t,x(t)),\\
				&\frac{d}{dt}  \rd_{i}\rd_j \bfrho(t,x(t))   = - \rd_i\bfu(t,x(t))\cdot\nb\rd_j\bfrho(t,x(t)) - \rd_j\bfu(t,x(t))\cdot\nb\rd_i\bfrho(t,x(t)) \\
				&\quad+\sum_k\left( \rd_i\rd_k\bfrho(t,x(t))\rd_j\rd_k\bfc(t,x(t)) + \rd_j\rd_k\bfrho(t,x(t))\rd_i\rd_k\bfc(t,x(t)) + \rd_i\rd_j\bfrho(t,x(t))\rd_k\rd_k\bfc(t,x(t)) \right). 
			\end{aligned}
			\right.
		\end{equation} To close the system, we need the equation for $\nb\bfu(t,x(t))$: \begin{equation}\label{eq:ode-vel-only}
			\begin{split}
				\frac{d}{dt} \rd_i\bfu(t,x(t)) = -\rd_i\bfu(t,x(t))\cdot\nb\bfu(t,x(t)) -\rd_i\nb p(t,x(t)) + D_u \lap \rd_i\bfu(t,x(t)). 
			\end{split}
		\end{equation} Even in the inviscid case $(D_u=0)$, the system does not close due to the pressure term. Then one can try to remove the pressure term by imposing certain symmetries on the initial data. Such a symmetry assumption should be respected by the system; in particular, it should be compatible with the potential function $\phi$. In the very special case when $d = 2$ and $\phi$ is radial (i.e. function of the variable $x_1^2+x_2^2$), rotational invariance persists for the system \eqref{eq:KSF} and by taking $x(t)=x_0 = 0$, we can remove the pressure term and still conclude finite-time singularity formation. 
		
		\item The case of general $\chi, k$ can be considered. Then, the proof of local well-posedness can be shown under mild assumptions on $\chi, k$ (e.g. the one used in Theorem \ref{thm:lwp-away}) and the ODE system \eqref{eq:ode} is now given by \begin{equation}  \label{eq:ode2}
			\left\{
			\begin{aligned} 
				\dot{C}(t) &= k(1)R(t),\\
				\dot{R}(t) &= 6\chi(1)C(t)R(t). 
			\end{aligned}
			\right.
		\end{equation} 
		
		\item Although we have considered vanishing of $\bfrho$ only, one can treat the case of vanishing $\bfc$ as well {in the proof of local well-posedness. However, it is not clear to us whether finite-time singularity formation persists in this class of data.}
		
	\end{itemize}

\end{remark}

\subsection*{Acknowledgments}
IJ has been supported  by the New Faculty Startup Fund from Seoul National University, the Science Fellowship of POSCO TJ Park Foundation, and the National Research Foundation of Korea grant No. 2019R1F1A1058486. KK has been supported by NRF-2019R1A2C1084685 and NRF-2015R1A5A1009350. {We are grateful to the anonymous referees for various comments and suggestions, which have significantly improved the manuscript. Especially, we are grateful for pointing to us the possibility of the interesting generalization given in Remark \ref{rem:rotation}.}

\bibliographystyle{amsplain}
\providecommand{\bysame}{\leavevmode\hbox to3em{\hrulefill}\thinspace}
\providecommand{\MR}{\relax\ifhmode\unskip\space\fi MR }
% \MRhref is called by the amsart/book/proc definition of \MR.
\providecommand{\MRhref}[2]{%
	\href{http://www.ams.org/mathscinet-getitem?mr=#1}{#2}
}
\providecommand{\href}[2]{#2}

% ----------------------------------------------------------------

\end{document}